\documentclass[12pt,reqno]{amsart}
\usepackage[headings]{fullpage}
\usepackage{amssymb,amsmath,amscd,bbm,tikz}
\usepackage[all,cmtip]{xy}
\usepackage{url}
\usepackage[bookmarks=true,%
    colorlinks=true,%
    linkcolor=blue,
    citecolor=blue,%
    filecolor=blue,%
    menucolor=blue,%
    urlcolor=blue,%
    breaklinks=true]{hyperref}
\usepackage{slashed}    
\usepackage{verbatim}
\usepackage{mathtools}
\usepackage[normalem]{ulem}   
\graphicspath{{figures/}}

\newtheorem{theorem}{Theorem}[section]
\theoremstyle{definition}
\newtheorem{proposition}[theorem]{Proposition}
\newtheorem{lemma}[theorem]{Lemma}
\newtheorem{definition}[theorem]{Definition}
\newtheorem{remark}[theorem]{Remark}
\newtheorem{corollary}[theorem]{Corollary}

\newtheorem{example}[theorem]{Example}

\def\BN{\mathbbm N}
\def\BZ{\mathbbm Z}
\def\BQ{\mathbbm Q}

\def\BC{\mathbbm C}

\def\calF{\mathcal F}
\def\calL{\mathcal L}

\def\calC{\mathcal C}
\def\calT{\mathcal T}

\def\calS{\mathcal S}

\def\calM{\mathcal M}
\def\calE{\mathcal E}

\def\la{\langle}
\def\ra{\rangle}
\def\ti{\widetilde}

\def\be{\begin{equation}}
\def\ee{\end{equation}}
\def\bea{\begin{equation*}}
\def\eea{\end{equation*}}

\def\vphi{\varphi}
\def\Av{\mathrm{Av}}

\def\rat{\mathrm{rat}}

\def\conn{\mathrm{c}}
\def\loc{\mathrm{loc}}


\begin{document}
\title[Asymptotically multiplicative quantum invariants]{
  Asymptotically multiplicative quantum invariants}
\author{Stavros Garoufalidis}
\address{
International Center for Mathematics, Department of Mathematics \\
Southern University of Science and Technology \\
Shenzhen, China \newline
{\tt \url{http://people.mpim-bonn.mpg.de/stavros}}}
\email{stavros@mpim-bonn.mpg.de}

\author{Seokbeom Yoon}
\address{International Center for Mathematics, Department of Mathematics \\
Southern University of Science and Technology \\
Shenzhen, China \newline
{\tt \url{https://sites.google.com/view/seokbeom}}}
\email{sbyoon15@gmail.com}

\keywords{Euler characteristic, volume, finite covers, torsion, 1-loop invariant,
  adjoint Reidemeister torsion, infinite cyclic cover, twisted Alexander polynomial,
  ideal triangulations, knots, hyperbolic 3-manifolds, Neumann--Zagier matrices,
  twisted Neumann--Zagier matrices, block circulant matrices, loop invariants,
  perturbative Chern--Simons theory, polytopes, lattice points, generalized power
  sums, multiplicative invariants, asymptotically multiplicative invariants,
  Lech--Mahler--Skolem theorem}


\date{November 11, 2022}

\begin{abstract}
  The Euler characteristic and the volume are two best-known multiplicative
  invariants of manifolds under finite covers. On the other hand, quantum invariants
  of 3-manifolds are not multiplicative. We show that a perturbative
  power series, introduced by Dimofte and the first author and shown to be a topological
  invariant of cusped hyperbolic 3-manifolds by Storzer--Wheeler and the
  first author, and conjectured to agree with the asymptotics of the Kashaev invariant
  to all orders in perturbation theory, is 
  asymptotically multiplicative under cyclic covers. Moreover, its coefficients
  are determined
  by polynomials constructed out of twisted Neumann--Zagier data.
  This gives a new $t$-deformation of the perturbative quantum invariants,
  different than the $x$-deformation obtained by deforming the geometric
  representation. We illustrate our results with several hyperbolic knots.
\end{abstract}

\maketitle

{\footnotesize
\tableofcontents
}


\section{Introduction}
\label{sec.intro}

\subsection{Multiplicative invariants and quantum invariants}

A topological invariant $\vphi$ of manifolds is said to be \emph{multiplicative}
if $\vphi({M'})=n\vphi({M})$ for every $n$-sheeted cover $M' \to M$.
Undoubtedly, the best known example of a multiplicative invariant is the Euler
characteristic and, in dimension 3, the volume of a (finite volume) complete
hyperbolic 3-manifold, which is a topological invariant as follows from
Mostow-rigidity~\cite{Thurston}.

On the other hand, quantum invariants of 3-manifolds constructed by a topological
quantum field theory, such as the Witten--Reshetikhin--Turaev
invariant~\cite{Witten:CS,RT:ribbon,Turaev:book}, the
Turaev--Viro invariant~\cite{TV}, and the Kashaev invariant~\cite{K95}, are far
from being multiplicative, even under cyclic covers~\cite{Gilmer}.
However, it turns out that quantum invariants are often
\emph{asymptotically multiplicative}, that is, they satisfy an equation of the form
$\vphi({M'})=n\vphi({M})+O(1)$ for suitable $n$-sheeted covers $M'\to M$, 
reminiscent to invariants of coarse geometry~\cite{Calegari:scl}.

Our goal is to show that some natural perturbative quantum invariants, namely the
ones defined in~\cite{DG1,DG2} are asymptotically multiplicative under cyclic covers
and even more, have a polynomial which determines their values at all cyclic
covers.

To avoid technicalities, we will focus on the 
set $\calM$ of knot complements in rational homology 3-spheres which is closed
 under cyclic coverings of order coprime to a natural number that depends on the
 manifold in question. A 3-manifold $M$ is in $\calM$ if and only if it has betti
number $b_1(M)=1$ and torus boundary; its $n$-fold cyclic cover will be denoted
by $M^{(n)}$.

In \cite{DG1} Dimofte and the first author introduced a power series
\be
\label{Phih}
\Phi_\calT(h)
=\frac{1}{\sqrt{\delta_\calT}}
\Big(1+\sum_{\ell=2}^\infty \Phi_{\calT,\ell} h^{\ell-1}\Big)
=\frac{1}{\sqrt{\delta_\calT}}
\exp\Big(\sum_{\ell=2}^\infty \Phi_{\calT,\ell}^\conn\, h^{\ell-1}\Big)
\ee
associated to an ideal triangulation $\calT$ (and more precisely, to the
Neumann--Zagier data of $\calT$) of a hyperbolic 3-manifold $M \in \calM$.
These power series are constructed by formal Gaussian integration of a multivariate
function which is a product of the infinite Pochhammer symbol (one for each tetrahedron)
times the exponential of a quadratic form. Hence the coefficients $\Phi_{\calT,\ell}$
(resp., $\Phi_{\calT,\ell}^\conn$), the so-called $\ell$-loop invariants
(resp., connected ones) are given by a finite sum over Feynman diagrams of the
contraction of tensors and take values in the invariant trace field $F$ of the
underlying hyperbolic manifold $M$.

It was recently proven in joint work of Storzer--Wheeler and the first author
~\cite{GSW:Phi0} that the power series $\Phi_\calT(h)$
is a topological invariant of cusped hyperbolic 3-manifolds, i.e., that it is
invariant under 2--3 Pachner moves as well as other choices made in its
definition. This series is
conjectured to be the asymptotic expansion to all orders in perturbation theory
of two famous quantum invariants, namely the Kashaev invariant $\la K \ra_N$
of a knot $K$~\cite{K95} and (after complex conjugation)  the Andersen--Kashaev
state-integral~\cite{AK:TQFT}. More precisely,
the Volume Conjecture of Kashaev~\cite{K95} asserts that for a hyperbolic
knot $K$, $\log|\la K \ra_N|$ is asymptotic to $\mathrm{Vol}(K)/(2\pi)$ as $N$ goes
to infinity, and its extension to all orders in $1/N$ asserts that~\cite{Ga:CS,DGLZ}
\be
\la K \ra_N \sim N^\frac{3}{2} e^{\tfrac{\mathrm{V}(K)}{2 \pi i}N}
\Phi_\calT \Big(\frac{2\pi i}{N} \Big) 
\ee
where $\mathrm{V}(K)=i \mathrm{Vol}(K) + \mathrm{CS}(K) \in \BC/(4 \pi^2 \BZ)$ is
the complexified volume of $K$.  
This aspect, together with the rich analytic and arithmetic structure of the
series $\Phi_\calT(h)$ is discussed in detail in~\cite{GZ:kashaev,GZ:qseries}.

\subsection{Asymptotic quantum invariants}
\label{sub.asy}

In our previous paper~\cite{GY21} we studied how the $1$-loop invariant,
$\delta_{\calT}$ in \eqref{Phih}, behaves under finite cyclic covers. To do so, we
showed that the Neumann--Zagier data of the $n$-cyclic cover $\calT^{(n)}$ of an 
ideal triangulation $\calT$ is determined by a twisted version of the Neumann--Zagier
data of $\calT$. Using this, we introduced a twisted version 
$\delta_\calT(t) \in F[t^{\pm 1}]/(\pm t^\BZ)$ of the 1-loop invariant, proved its
topological invariance, and showed that it determines the 1-loop invariant of
$\calT^{(n)}$  for all $n$ by
$\delta_{\calT^{(n)}} = \prod_{\omega^n=1 } \delta_\calT(\omega)$. 
Note that the twisted 1-loop invariant  defined in~\cite{GY21} has a $t-1$ factor
and is $t-1$ times the one used here. Finally, we conjectured that
$\delta_{\calT}(t)$ agrees with the
adjoint twisted Alexander polynomial, a palindromic polynomial  (see
e.g.~\cite{DubYam,Dunfield:twisted}). In the rest of the paper, we will assume that
$\delta_{\calT}(t)$ is palindromic and denote the set of its roots (with possible
repetitions) by $\Lambda=\{\lambda_1^{\pm1},\ldots,\lambda_r^{\pm1}\}$. 
We also denote by $E$ the splitting field of $\delta_\calT(t)$ over the invariant
trace field $F$ of $M$ and by $\|\delta_{\calT}\|$ the maximum of the absolute
values of the roots of $\delta_{\calT}(t)$. The palindromic condition
implies that $\|\delta_{\calT}\| \geq 1$, and when $\delta_{\calT}(t)$ has no roots on
the unit circle, it follows that $\|\delta_{\calT}\| > 1$.

To simplify the statements of our theorems, we will also assume that
(a) $\delta_\calT(t)$ is non-resonant, i.e. that $\prod_{j=1}^r \lambda_j^{n_j} =1$
for integers $n_j$ implies that $n_j=0$ for all $j$, (b) has no roots on the
unit circle, and (c) that the Neumann--Zagier datum is computed with respect to
the longitude. This way our theorems
have clean statements. However, our proofs apply to the resonant case as well as
to Neumann--Zagier data with respect to an arbitrary peripheral curve,
and given explicitly as remarks following the proofs of the theorems.

\begin{theorem}
\label{thm.psi}
For every $\ell \geq 2$ there exists $\Psi^\conn_{\mathcal{T},\ell} \in E$ 
such that
\be
\label{eqn.lloop3}
\Phi^\conn_{\mathcal{T}^{(n)},\ell}  = n \Psi^\conn_{\calT,\ell} 
+ O\big(n^{\ell-1}\|\delta_{\calT}\|^{-n}\big)
\ee
as $n$ tends to infinity, where $\Psi^\conn_{\calT,\ell}$ are
\begin{itemize}
\item
quantum invariants given as a weighted sum over $\ell$-loop
Feynman diagrams of multidimensional integrals of rational differential forms over
tori.
\item
multiplicative under cyclic covers, i.e.,
\be
\label{psin}
\Psi^\conn_{\calT^{(n)},\ell} = n \Psi^\conn_{\calT, \ell} \qquad (n \geq 1) \,.
\ee
\end{itemize}
\end{theorem}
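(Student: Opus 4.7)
The plan is to exploit the block-circulant structure of the Neumann--Zagier matrices of $\calT^{(n)}$ established in \cite{GY21}: this allows one to Fourier-diagonalize each Feynman diagram contributing to $\Phi^\conn_{\calT^{(n)},\ell}$ and recognize the resulting finite sum as a Riemann sum over a torus whose continuous limit will be $n\,\Psi^\conn_{\calT,\ell}$. Concretely, I would write
\[
\Phi^\conn_{\calT^{(n)},\ell} \;=\; \sum_\Gamma W_\Gamma(\calT^{(n)})
\]
as a sum over connected $\ell$-loop Feynman diagrams, where each $W_\Gamma$ is a contraction of propagators (inverses of the NZ quadratic form of $\calT^{(n)}$) against vertex tensors. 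Since the NZ matrices of $\calT^{(n)}$ are block circulant in the deck-transformation grading, they simultaneously diagonalize on the $n$-th roots of unity, so each propagator admits the presentation
\[
P^{(i,\mu),(j,\nu)}_{\calT^{(n)}} \;=\; \frac{1}{n}\sum_{\omega^n=1} P_\omega^{ij}\,\omega^{\mu-\nu},
\]
where $P_\omega$ is the inverse of the twisted NZ matrix of $\calT$ at $t=\omega$, a matrix-valued rational function of $t$ whose entries have denominator $\delta_\calT(t)$.

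Substituting this presentation into a diagram $\Gamma$ with $v$ vertices and $e=v+\ell-1$ edges and collapsing the $\BZ/n\BZ$-valued vertex indices via $\sum_{\mu\in\BZ/n\BZ}\omega^\mu(\omega')^{-\mu}=n\,\delta_{\omega,\omega'}$ one vertex at a time, one obtains
\[
W_\Gamma(\calT^{(n)}) \;=\; n^{v-e}\!\!\sum_{\omega_1^n=\cdots=\omega_\ell^n=1}\!\! F_\Gamma(\omega_1,\ldots,\omega_\ell) \;=\; n^{1-\ell}\!\!\sum_{\omega_1^n=\cdots=\omega_\ell^n=1}\!\! F_\Gamma(\omega_1,\ldots,\omega_\ell),
\]
where $\omega_1,\ldots,\omega_\ell$ index a basis of loops of $\Gamma$ and $F_\Gamma$ is a rational function with coefficients in $E$ whose denominator is a product of $\delta_\calT$ evaluated at Laurent monomials in the $\omega_i$. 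Hypotheses (a) and (b) guarantee that $F_\Gamma$ has no poles on $T^\ell=\{|z_i|=1\}$ and that its polar hypersurfaces sit in general position at distance $\|\delta_\calT\|^{\pm1}$ from $T^\ell$. I then \emph{define}
\[
\Psi^\conn_{\calT,\ell} \;:=\; \sum_\Gamma \int_{T^\ell} F_\Gamma(z_1,\ldots,z_\ell)\,\prod_{i=1}^\ell\frac{dz_i}{2\pi i\, z_i}\,,
\]
which visibly lies in $E$ and has the shape required by the first bullet of the theorem.

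The asymptotic \eqref{eqn.lloop3} then follows from a standard Riemann-sum estimate: expanding $F_\Gamma = \sum_{k\in\BZ^\ell} c_k z^k$ in the polyannulus $\|\delta_\calT\|^{-1}<|z_i|<\|\delta_\calT\|$, the coefficients satisfy $|c_k|\leq C\|\delta_\calT\|^{-|k|_\infty}$, and
\[
\tfrac{1}{n^\ell}\!\sum_{\omega_i^n=1}\! F_\Gamma \;-\; \int_{T^\ell}\!F_\Gamma \;=\; \sum_{0\neq k\in n\BZ^\ell}\! c_k \;=\; O\!\big(\|\delta_\calT\|^{-n}\big).
\]
Multiplying by $n^\ell\cdot n^{1-\ell}=n$ and summing over diagrams yields the main term $n\,\Psi^\conn_{\calT,\ell}$ with error $O(n\,\|\delta_\calT\|^{-n})$, comfortably absorbed into $O(n^{\ell-1}\|\delta_\calT\|^{-n})$; the extra polynomial slack leaves room for contributions from polar loci of higher multiplicity in $F_\Gamma$. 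The multiplicativity \eqref{psin} is then built into the construction, because iterating the Fourier diagonalization identifies the twisted data of $(\calT^{(n)})^{(m)}=\calT^{(mn)}$ with a product over $n$-th roots of unity of shifts of the twisted data of $\calT^{(m)}$ (this is the very mechanism that produces $\delta_{\calT^{(n)}}=\prod_{\omega^n=1}\delta_\calT(\omega)$ in \cite{GY21}). Consequently $F_{\Gamma,\calT^{(n)}}$ pulls back along the degree-$n^\ell$ cover $T^\ell\to T^\ell$, $z_i\mapsto z_i^n$, to the integrand for $\calT$, and the prefactor $n^{1-\ell}$ combined with degree $n^\ell$ gives the net factor $n$.

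The main obstacle I expect is the careful diagrammatic bookkeeping underlying the second paragraph: one must verify that the vertex tensors of \cite{DG1,DG2} for $\calT^{(n)}$, restricted to the Fourier-diagonal basis, are given by fixed polynomial expressions in the twisted NZ data of $\calT$ at $\omega_1,\ldots,\omega_\ell$ independent of $n$, and that the longitude gauge-fixing (c) produces integrands that are genuinely regular on $T^\ell$. Equally, checking that non-resonance hypothesis (a) is precisely what forces the polar hypersurfaces of $F_\Gamma$ to remain in general position relative to $T^\ell$ — so that the multivariate Riemann-sum estimate runs uniformly across the finite collection of $\ell$-loop diagrams — is where the analytical content is concentrated.
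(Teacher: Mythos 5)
Your proposal follows the same skeleton as the paper's proof: recognize the $\BZ/n\BZ$-flow sum (Theorem~\ref{thm.feynman}, which you essentially re-derive from the block-circulant structure) as a Riemann sum, define $\Psi^\conn_{\calT,\ell}$ as the corresponding torus integral, estimate the error exponentially, and deduce multiplicativity. Two points of genuine difference, and two genuine gaps.

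\textbf{Differences.} For the exponential estimate, the paper does not run a Poisson/Fourier argument directly; it first observes that Euler--Maclaurin kills all power-law corrections because the integrand is periodic (giving $O(1/n^\infty)$), and then appeals to the explicit closed form of Theorem~\ref{thm.phi} to sharpen this to the exponential rate. Your Fourier-coefficient approach is more self-contained, but it needs care: the polar locus of $F_\Gamma$ is a union of hypersurfaces $\{T_e(z)=\lambda_j^{\pm 1}\}$ where $T_e$ is a Laurent \emph{monomial} with exponents in $\{0,\pm1\}$, not necessarily a single coordinate. When $T_e=z_iz_j$ (say), the polyannulus of analyticity in each $z_i$ only extends to $|z_i|\in(\sqrt{|\lambda|},1/\sqrt{|\lambda|})$, not to $\|\delta_\calT\|^{\pm1}$, so the decay rate $|c_k|\leq C\,\|\delta_\calT\|^{-|k|_\infty}$ you assert is not automatic and requires a separate argument (the paper's route via Theorem~\ref{thm.phi} sidesteps this). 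For multiplicativity, the paper's argument is much shorter than yours: once \eqref{eqn.lloop3} is proved, it follows immediately from $(\calT^{(n)})^{(m)}=\calT^{(nm)}$ by comparing leading asymptotics as $m\to\infty$; no explicit pullback of integrands is needed, and your pullback claim is not carefully verified.

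\textbf{Gaps.} The claim that $\Psi^\conn_{\calT,\ell}$ ``visibly lies in $E$'' is unjustified. An integral of a rational function over a torus does not manifestly take values in any number field; the paper explicitly remarks that establishing $\Psi^\conn_{\calT,\ell}\in E$ from the integral representation would require a Grothendieck-residue argument, and instead deduces it from the polynomial formula of Theorem~\ref{thm.phi}, whose coefficients lie in $E$. This is a step your proof omits. Secondly, when asserting that $F_\Gamma$'s polar hypersurfaces ``sit at distance $\|\delta_\calT\|^{\pm1}$ from $T^\ell$'' you are implicitly assuming that every $T_e$ is a single coordinate $z_i$; the actual flow monomials on a spanning-tree complement are products of the loop variables, so the closest singularity is governed by the root of $\delta_\calT$ closest to the unit circle raised to a fractional power, not by $\|\delta_\calT\|^{-1}$ directly. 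That changes the base of the exponential bound and should be stated precisely (the paper itself only gives the error as a generic exponential $O(p(n)|\lambda_j|^n)$ governed by the relevant root).
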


It is likely that the integral representation of $\Psi^\conn_{\calT,\ell}$ can
be evaluated using Grothendieck residues~\cite{Tsikh,Cattani} thus giving an
alternative proof that these invariants take values in the splitting field $E$. 

\subsection{The shape of the quantum invariants of cyclic covers}
\label{sub.rational}

In this section we describe the shape of the $\ell$-loop invariants of $n$-cyclic
covers in terms of the evaluation of polynomials (in a finite dimensional
vector space for each fixed $\ell$) at $1/(1-\lambda_j^n)$ for $j=1,\dots,r$.
We abbreviate the polynomial ring $E[x_1,\ldots,x_r]$ by $E[x]$ and let
$\calF_sE[x]$ denote its subspace spanned by elements of degree at most $s$. 
  
\begin{theorem}
\label{thm.phi}
For every $\ell \geq 2$ there exists a polynomial 
\be
\label{px}
p_{\calT,\ell}(x_1,\dots,x_r,y) \in \calF_{2\ell-2}E[x][y]
\ee
such that for all but finitely many $n$, we have
\be
\label{philn}
\Phi_{ \mathcal{T}^{(n)},\ell} =
p_{\calT,\ell}\left(\frac{1}{1-\lambda_1^n},\dots,\frac{1}{1-\lambda_r^n},n\right) \, .
\ee
The $y$-degree of $p_{\calT,\ell}$ is at most $\ell-1$. 
\end{theorem}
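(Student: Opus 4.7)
The plan is to transform the Feynman-diagrammatic formula for $\Phi_{\calT^{(n)},\ell}$ into a closed-form expression via the block-circulant structure of the Neumann--Zagier data on cyclic covers, and then to evaluate the resulting $\mu_n$-character sums using the elementary partial-fraction identities for $\sum_{\omega \in \mu_n}(1-z\omega)^{-k}$.

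First I would expand $\Phi_{\calT^{(n)},\ell}$ as a weighted sum over Feynman diagrams $\Gamma$ (both connected and disconnected) with $E(\Gamma) - V(\Gamma) = \ell - 1$, whose propagators are entries of $(H^{(n)})^{-1}$ and whose vertex weights are the $a$-independent Pochhammer Taylor coefficients. By \cite{GY21} the NZ quadratic form of $\calT^{(n)}$ is block-circulant built from the twisted matrix $H(t)$ with $\det H(t) = \delta_\calT(t)$ up to units, so the discrete Fourier transform diagonalises it as $\bigoplus_{\omega \in \mu_n} H(\omega)$; non-resonance and the absence of unit-circle roots make each $H(\omega)$ invertible for every $\omega \in \mu_n$ and all but finitely many $n$, giving
\bea
(H^{(n)})^{-1}_{(i,a),(j,b)} \;=\; \frac{1}{n}\sum_{\omega \in \mu_n}\omega^{a-b}\,H(\omega)^{-1}_{ij}.
\eea
Summing over the cover-copy indices $a_v$ at each vertex of $\Gamma$ forces the edge Fourier variables to be divergence-free, so the effective summation is parametrised by a basis of $H_1(\Gamma;\BZ/n) \cong (\BZ/n)^{b_1(\Gamma)}$. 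Collecting $n$-powers yields
\bea
\Phi^\Gamma_{\calT^{(n)}} \;=\; n^{1-\ell}\sum_{(\omega_1,\dots,\omega_{b_1(\Gamma)})\in\mu_n^{b_1(\Gamma)}} F_\Gamma(\omega_1,\dots,\omega_{b_1(\Gamma)}),
\eea
where each edge variable $\omega_e$ is a prescribed Laurent monomial in the cycle variables and $F_\Gamma$ is a rational function over $E$ with poles only along $\omega_e = \lambda_j^{\pm 1}$.

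The second step is to evaluate the character sum iteratively in the cycle variables. Partial-fractioning $F_\Gamma$ in $\omega_i$ and applying the identity
\bea
\sum_{\omega \in \mu_n}\frac{1}{(1-z\omega)^k} \;=\; n\, Q_k\!\left(n,\tfrac{1}{1-z^n}\right),
\eea
for universal polynomials $Q_k$ obtained by $z$-derivatives of the base case $k=1$, reduces each summation to a polynomial in $n$ and $\frac{1}{1-z^n}$. Since every pole location after partial-fractioning is a product of some $\lambda_j^{\pm 1}$ with a Laurent monomial in the remaining cycle variables, raising to the $n$-th power kills the monomial and leaves $\frac{1}{1-\lambda_j^{\pm n}}$, independent of the remaining variables. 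After the $b_1(\Gamma)$-fold iteration the result is polynomial in $n$ and in $\{1/(1-\lambda_j^{\pm n})\}$, and the identity $\frac{1}{1-\lambda^{-n}} = 1 - \frac{1}{1-\lambda^n}$ folds each palindromic pair into the single variable $x_j = 1/(1-\lambda_j^n)$. Summing over diagrams with their combinatorial weights produces the polynomial $p_{\calT,\ell}(x_1,\dots,x_r,y) \in E[x][y]$, valid for all $n$ outside a finite exceptional set.

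The main obstacle is to establish the sharp degree bounds $y$-degree $\leq \ell-1$ and total $x$-degree $\leq 2\ell-2$. These are saturated by the maximally-disconnected diagrams: for $\Gamma$ with $c(\Gamma)$ components, the Euler-characteristic constraint $E-V=\ell-1$ forces $b_1(\Gamma) = \ell + c(\Gamma) - 1$, and the maximum value of $c(\Gamma)$ is $\ell-1$ (achieved when every connected component is $2$-loop), yielding $b_1(\Gamma) = 2\ell-2$ cycle variables. By non-resonance every propagator pole is simple, so each iterated character summation contributes exactly one factor of $n$ and one linear term in the $x$'s; the total $n$-power $n^{b_1(\Gamma)}$ combines with the prefactor $n^{1-\ell}$ to give $n^{c(\Gamma)}$, bounded by $n^{\ell-1}$, while the total $x$-degree is bounded by $b_1(\Gamma) \leq 2\ell-2$. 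The technical burden is to propagate these bounds carefully through the multi-variable partial-fraction machinery (tracking residue dependencies on unsummed cycle variables) and to verify that no cancellation lowers the bounds' validity across all Feynman diagrams; once this bookkeeping is complete, the theorem follows.
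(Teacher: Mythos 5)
Your first two steps follow the paper's own strategy closely: expand $\Phi_{\calT^{(n)},\ell}$ via Feynman diagrams, block-diagonalize the Neumann--Zagier propagator by a discrete Fourier transform (the paper's Theorem~\ref{thm.diagonalization} and Theorem~\ref{thm.feynman}, with $\BZ/n\BZ$-flows playing the role of your cycle variables), and then reduce the $\BZ/n\BZ$-torus character sums to polynomials in $n$ and $1/(1-\lambda_j^n)$ (the paper's Lemma~\ref{lem.comp}, which does this in general via Brion's formula rather than one variable at a time). Up to here the proposal is sound and essentially the route taken in the paper.

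The gap is in the degree bound, and it is genuine. You assert that ``by non-resonance every propagator pole is simple, so each iterated character summation contributes exactly one factor of $n$ and one linear term in the $x$'s,'' and conclude that the total $x$-degree is bounded by $b_1(\Gamma)$. Both halves of this are wrong. The denominator of $F_\Gamma$ is $\prod_{e\in E(\Gamma)}\delta_\calT(T_e)$, one factor per \emph{edge}, not per cycle variable. Parallel edges carry the same Laurent monomial $T_e$, so $\delta_\calT(T_e)$ can repeat and the same $\lambda_j$ produces a higher-order pole; non-resonance (distinctness of $\lambda_j$) does not prevent this. Concretely, the paper's Example~\ref{ex.lemma} shows that $\sum_{t^n=1}(1-at)^{-4}$ (i.e.\ $s=4$ edges, $d=1$ cycle variable) has $x$-degree $4$ and $y$-degree $4$, not $1$. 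The correct naive bounds from Lemma~\ref{lem.comp} are $x$-degree at most $s$ and $y$-degree at most $s-d$, with $s$ the number of edges (up to $3\ell-3$), which give only $p_{\calT,\ell}\in\calF_{(3\ell-3)r}E[x][y]$ as in the paper's Remark~\ref{rem.phi}. Getting down to $\calF_{2\ell-2}$ and $y$-degree $\leq\ell-1$ requires the paper's Lemma~\ref{lem.twoell}: for any set $S$ of edges missing at least one edge at every odd-degree vertex, $|S|\leq 2\ell-2$ and $|S|-(d-1)\leq\ell-1$; and the accompanying partial-fraction identity, exploiting the flow relation $T_1\cdots T_k=T_{k+1}\cdots T_{2m+1}$ at an odd-degree vertex, which expresses a product of $2m+1$ factors as a combination of products of $2m$ factors. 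This reduction step---trading the flow constraint for one fewer denominator factor before invoking Lemma~\ref{lem.comp}---is the missing idea in your proposal. The Euler-characteristic accounting with disconnected diagrams ($b_1=\ell+c-1\leq 2\ell-2$) also does not match the paper's setup, where the sum in~\eqref{eqn.lloop} is over connected graphs $G\in\mathcal{G}_\ell$, but even granting your combinatorics it cannot salvage the argument because the $x$-degree is controlled by edges, not cycles.
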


For example, the $2$-loop invariant of  $n$-cyclic covers is given by
\be
\label{2loopn}
\Phi_{\mathcal{T}^{(n)},2} = n \left(
\sum_{1 \leq i \leq j \leq r} \frac{c_{ij}}{(1-\lambda_i^n)(1-\lambda_j^n)}
+ \sum_{1 \leq i \leq r} \frac{c_{i}}{1-\lambda_i^n} + c_{0} \right)
\ee
where $c_{ij}$, $c_i$ and $c_0$ are $(r+1)(r+2)/2$ constants in $E$.

\noindent
Let us make some comments to complement the above theorem.

\noindent
{\bf 1.} The $\ell$-loop invariants are given by a finite sum over the set of
$\ell$-loop Feynman diagrams. The proof of the above
theorem is local, i.e., valid for the contribution of each Feynman diagram, hence
$p_{\calT,\ell}$ is a sum of polynomials that depend on Feynman diagrams. 

\noindent
{\bf 2.}
The coefficients of $p_{\calT,\ell}$ have an integrality property discussed in
 Remark~\ref{rem.integrality} after the proof of the theorem. 

\noindent
{\bf 3.} 
The theorem is stated for each fixed $\ell$ and all but finitely
many $n$. On the other hand, we have good reasons to think that the result holds
for all $n$, and present evidence of this in Section~\ref{sec.examples}.

In the special case when $\delta_{\calT}(t)$ is quadratic (as is the case for all
twist knots), we have an alternative form of the loop invariants of cyclic covers. 

\begin{theorem}
\label{thm.quad}  
If $\delta_{\calT}(t)$ is quadratic, then  for each 
$\ell \geq 2$ there exists a polynomial $q_{\calT,\ell}(x,y) \in \calF_{2\ell-2}F[x][y]$ 
such that  for all but finitely many $n$, we have
\be
\Phi_{\calT^{(n)}, \ell}
= \sum_{t^n=1} q_{\calT,\ell} \left(\frac{1}{\delta_{\calT}(t)},\frac{1}{n} \right) \, .
\ee
\end{theorem}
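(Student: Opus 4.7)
Proof plan.

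Because $\delta_\calT(t)$ is quadratic and palindromic, we are in the $r = 1$ case of Theorem~\ref{thm.phi} and may factor $\delta_\calT(t) = c(1-\lambda t)(1-\lambda^{-1}t)$ for some $c \in F^\times$ and non-resonant $\lambda \in E$. Theorem~\ref{thm.phi} then produces $p_{\calT,\ell}(x,y) \in \calF_{2\ell-2}E[x][y]$ with $\deg_y p_{\calT,\ell} \leq \ell - 1$ and, for all but finitely many $n$,
\[
\Phi_{\calT^{(n)},\ell} = p_{\calT,\ell}(u_n, n), \qquad u_n := 1/(1-\lambda^n).
\]
The aim is to rewrite this as $\sum_{t^n=1} q_{\calT,\ell}(1/\delta_\calT(t), 1/n)$ for some $q_{\calT,\ell} \in F[x,y]$ of $x$-degree at most $2\ell - 2$.

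The building blocks arise from partial fractions. The geometric-series identity $\sum_{t^n = 1}(1 - at)^{-1} = n/(1-a^n)$, valid whenever $a^n \neq 1$, together with iterated differentiation in $a$, shows that $f_k(n;a) := \sum_{t^n=1}(1-at)^{-k}$ is a polynomial in $n$ and $(1-a^n)^{-1}$ of bi-degree $(\leq k, \leq k)$. Decomposing $\delta_\calT(t)^{-k}$ by partial fractions at the two poles $\lambda^{\pm 1}$, summing over $t^n=1$, and substituting $1/(1-\lambda^{-n}) = 1 - u_n$, we obtain
\[
e_k(n) := \sum_{t^n=1}\delta_\calT(t)^{-k}
\]
as a polynomial in $u_n$ and $n$ of bi-degree $(\leq k,\leq k)$ with leading bi-homogeneous term $C_k n^k u_n^k$, where
\[
C_k = c^{-k}\bigl((1-\lambda^{-2})^{-k} + (-1)^k(1-\lambda^2)^{-k}\bigr) \in F^\times
\]
is nonzero by non-resonance. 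Each $e_k(n)$ lies in $F$ for every $n$ by Galois symmetry $\lambda \leftrightarrow \lambda^{-1}$.

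Because $\Phi_{\calT^{(n)},\ell} \in F$ for all $n$ and because non-resonance guarantees that $u_n$ takes infinitely many values, the pointwise Galois-invariance promotes to the polynomial identity $p_{\calT,\ell}(u,n) = \sigma(p_{\calT,\ell})(1-u,n)$ in $E[u,n]$, where $\sigma$ denotes the Galois involution of $E/F$ fixing $F$ and swapping $\lambda \leftrightarrow \lambda^{-1}$. Equivalently, $p_{\calT,\ell}(u_n, n)$ lies in the $F$-subspace
\[
V := \{P \in E[u,n] : P(u,n) = \sigma(P)(1-u,n)\}.
\]
A triangular Gaussian-elimination argument in the Laurent ring $E[u, n^{\pm 1}]$, driven by the non-vanishing leading coefficients $C_k$, produces scalars $c_{k,j} \in F$, nonzero on a finite index set with $k \leq 2\ell - 2$, such that $p_{\calT,\ell}(u_n, n) = \sum_{k,j} c_{k,j}\, n^{-j}\, e_k(n)$. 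The negative powers of $n$ on the right cancel automatically because the left-hand side is polynomial in $n$. Setting $q_{\calT,\ell}(x,y) := \sum_{k,j} c_{k,j}\, x^k y^j \in \calF_{2\ell-2}F[x][y]$ and swapping the finite summations yields the theorem for all but finitely many $n$.

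The crux is the spanning step in the third paragraph: one must verify that the $F$-linear span of the Laurent combinations $\{n^{-j}e_k(n)\}_{k,j \geq 0}$ contains every polynomial element of $V$ of bi-degree $\leq (2\ell - 2, \ell - 1)$. The triangular leading terms $C_k n^k u_n^k$ furnish the inductive reduction, and an $F$-dimension count of $V$ in each bi-degree, matched against the space of allowable $q$, closes the argument.
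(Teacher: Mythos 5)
Your overall strategy mirrors the paper's: start from Theorem~\ref{thm.phi}, decompose $\delta_\calT(t)^{-k}$ by partial fractions at $\lambda^{\pm 1}$, exploit the triangular structure between $\{\sum_{t^n=1}\delta_\calT(t)^{-k}\}_k$ and $\{(1-\lambda^n)^{-k}\}_k$ to invert, and descend to $F$ via the Galois involution $\lambda \leftrightarrow \lambda^{-1}$. The paper does exactly this, making the triangular change of basis explicit in its Lemmas~\ref{lem.delta} and~\ref{lem.last} (with leading coefficients $2\lambda^k(\lambda^2-1)^{-k}$, the analogue of your $C_k$).

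However, there is a genuine gap at the step where you conclude that $q_{\calT,\ell}$ is a polynomial (not Laurent polynomial) in $y$. You write that ``the negative powers of $n$ on the right cancel automatically because the left-hand side is polynomial in $n$,'' but this does not follow: the triangular inversion of Lemma~\ref{lem.last} expresses $u_n^k$ in terms of the $e_i(n)$ with coefficients $\beta_{k,i}(1/n)$ of degree up to $k$ in $1/n$, while $p_{\calT,\ell}$ carries $n$-degree only up to $\ell-1$. Carrying this through, the coefficient of $e_i(n)$ is \emph{a priori} a Laurent polynomial in $n$, i.e., $q_{\calT,\ell}$ is \emph{a priori} a Laurent polynomial in $y$ with possible negative $y$-powers, and the identity $p_{\calT,\ell}(u_n,n) = \sum_k q_k(1/n)\,e_k(n)$ does not by itself force those to vanish (each $e_k(n)$ already mixes powers of $n$ and $u_n$, so comparing $n$-degrees is not termwise). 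The paper closes this gap with a substantive input: Theorem~\ref{thm.psi} shows that $\Phi_{\calT^{(n)},\ell}$ grows at most linearly in $n$, which rules out positive $n$-powers in $q_{\calT,\ell}(x,1/n)$ and hence negative $y$-powers in $q_{\calT,\ell}$. Your alternative fix — ``an $F$-dimension count of $V$ in each bi-degree'' — is stated but not carried out, and it is not obvious that the dimensions match without additional analysis; as it stands this is an unsupported assertion, not a proof.

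A secondary, smaller point: your claim that Gaussian elimination in $E[u,n^{\pm1}]$ directly produces coefficients $c_{k,j} \in F$ presupposes that the elimination respects the Galois involution, which is plausible but should be argued (the paper instead first produces coefficients in $E$ and then observes at the very end that invariance of both sides of~\eqref{pjthm1a} under $\lambda \mapsto \lambda^{-1}$ forces the coefficients into $F(\lambda+\lambda^{-1})=F$).
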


\subsection{Rationality and determination}
\label{sub.rationality}
	
Theorem~\ref{thm.phi} determines the $\ell$-loop invariants of $n$-cyclic covers
in terms of evaluations of a polynomial $p_{\calT,\ell}$. In this section we address
the opposite problem of determining the polynomial $p_{\calT,\ell}$ from its
evaluations.

A corollary of Theorem~\ref{thm.phi} is that the sequence of $\ell$-loop invariants
of cyclic covers, after multiplied by a suitable power of the $n$-th cyclic resultant 
$N_n(\delta_\calT)=\prod_{\omega^n=1}\delta_\calT(\omega)$
of $\delta_\calT(t)$ has a rational generating series.  
In particular, the sequence of renormalized $\ell$-loop invariants of cyclic covers
is a generalized power sum (in the sense of~\cite{Poorten, Everest} and briefly
reviewed in Section~\ref{sub.psums} below) uniquely determined by a rational function,
which we may think of as a twisted version of the loop invariant.

\begin{proposition}
\label{prop.rational}
\rm{(a)} For every $\ell \geq 2$, there exists a rational function
$\Phi^\rat_{\calT,\ell}(t) \in E(t)$ regular at $t=0$ such that
\be
\label{phirat}
\Phi^\rat_{\calT,\ell}(t)=\sum_{n=0}^\infty 
N_n(\delta_\calT)^{\ell-1} \Phi_{\mathcal{T}^{(n)},\ell} \,t^n \,.
\ee
\rm{(b)} The polynomial $p_{\calT,\ell}(x,y)$ of~\eqref{px} is determined by
$\Lambda$ and $(\ell-1)\binom{r+2\ell-2}{r}$ consecutive values of
$\Phi_{ \mathcal{T}^{(n)},\ell}$.
\end{proposition}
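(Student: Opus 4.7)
The plan is to derive (a) from Theorem~\ref{thm.phi} by clearing denominators to exhibit $N_n(\delta_\calT)^{\ell-1}\Phi_{\calT^{(n)},\ell}$ as a generalized power sum, and then to derive (b) from (a) by polynomial interpolation, with the key inputs being the non-resonance of $\Lambda$ and the structural fact that $p_{\calT,\ell}$ is divisible by $y$.

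For (a), I would first use the palindromic factorization $\delta_\calT(t) = c\prod_{j=1}^{r}(t-\lambda_j)(t-\lambda_j^{-1})$ to compute
\begin{equation*}
N_n(\delta_\calT) = \prod_{\omega^n=1}\delta_\calT(\omega) = (-1)^r \nu^n \prod_{j=1}^r (1-\lambda_j^n)^2, \qquad \nu := c/\prod_j \lambda_j.
\end{equation*}
By Theorem~\ref{thm.phi}, for all but finitely many $n$ each monomial contribution to $\Phi_{\calT^{(n)},\ell}$ has the form $C\cdot n^k \prod_j(1-\lambda_j^n)^{-a_j}$ with $k\le\ell-1$ and $\sum_j a_j \le 2\ell-2$; componentwise $a_j \le 2(\ell-1)$, so multiplication by $N_n(\delta_\calT)^{\ell-1}$ clears every denominator. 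Expanding the remaining $\prod_j(1-\lambda_j^n)^{2(\ell-1)-a_j}$ in powers of $\lambda_j^n$ leaves a finite sum of terms $\alpha_m\, n^{k_m}\mu_m^n$, each contributing a rational generating function $\sum_n n^{k_m}(\mu_m t)^n$ regular at $t=0$. The finitely many exceptional small $n$ are absorbed by adding a polynomial in $t$.

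For (b), I write $p_{\calT,\ell}(x,y)=\sum_{(a,k)\in S} c_{a,k}\,x^a y^k$ with $|S|=K:=(\ell-1)\binom{r+2\ell-2}{r}$; this count, strictly smaller than the naive $\ell\binom{r+2\ell-2}{r}$ allowed by Theorem~\ref{thm.phi}, reflects the structural divisibility $y\mid p_{\calT,\ell}$ (already visible in \eqref{2loopn}, where for $\ell=2$ the overall factor of $n$ appears outside the bracket). Given $K$ consecutive values $\Phi_{\calT^{(n_0+i)},\ell}$ with $n_0$ large enough to apply Theorem~\ref{thm.phi}, one obtains a $K\times K$ linear system for the $c_{a,k}$ with matrix $M_{i,(a,k)}=(n_0+i)^k\prod_j(1-\lambda_j^{n_0+i})^{-a_j}$. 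To show $M$ is nonsingular, I argue by contradiction: a nonzero $q$ in the span of $S$ with $K$ consecutive zero evaluations, upon multiplication by $N_n(\delta_\calT)^{\ell-1}$ as in (a), becomes a generalized power sum $u_n$ vanishing on $K$ consecutive integers; the standard theory of linear recurrence sequences (cf.\ \cite{Poorten,Everest}) then forces $u_n\equiv 0$, and linear independence of the distinct $n^k\mu^n$, guaranteed by the non-resonance of $\Lambda$, yields $q=0$.

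The main obstacle is matching the dimension $K$ of the ambient space with the total pole multiplicity of $\Phi^\rat_{\calT,\ell}(t)$ attached to $q$, so that $K$ consecutive vanishings actually force identical vanishing. The divisibility $y\mid p_{\calT,\ell}$ is what reduces $\ell$ to $\ell-1$ in the count, and establishing it requires going into the Feynman-diagrammatic construction behind Theorem~\ref{thm.phi}: each $\ell$-loop diagram picks up a global sum over the index set $\BZ/n\BZ$ of the cyclic cover, producing an overall factor of $n$. A secondary book-keeping point is to absorb the finitely many exceptional small $n$ in (a) into a polynomial correction without disturbing rationality or regularity at $t=0$.
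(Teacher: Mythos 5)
Your treatment of part (a) is correct and essentially coincides with the paper's: clear the denominators using $N_n(\delta_\calT) = \prod_j(1-\lambda_j^n)(1-\lambda_j^{-n})$, observe that $N_n(\delta_\calT)^{\ell-1}\Phi_{\calT^{(n)},\ell}$ becomes a polynomial evaluation at $(\lambda_1^n,\dots,\lambda_r^n,n)$, i.e.\ a generalized power sum, and absorb the exceptional small $n$ into a polynomial correction. Your identification of $y\mid p_{\calT,\ell}$ as the source of the factor $\ell-1$ (rather than $\ell$) in the count is also correct and matches the paper, which extracts it from the Feynman-diagram structure (Lemma~\ref{lem.twoell} and Equations~\eqref{eqn.newdef1},~\eqref{eqn.lemma}).

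However, your part (b) has a genuine gap, and it is exactly the one you flag at the end without resolving. You argue: if $q\neq 0$ lies in the $K$-dimensional span of the allowed monomials ($K=(\ell-1)\binom{r+2\ell-2}{r}$) and its evaluations vanish at $K$ consecutive integers, then after multiplying by $N_n(\delta_\calT)^{\ell-1}$ one obtains a generalized power sum $u_n$ vanishing at $K$ consecutive integers, and ``the standard theory of linear recurrence sequences forces $u_n\equiv 0$.'' That inference is false here. Once the denominators are cleared, the generalized power sum $u_n$ has roots that are monomials $\lambda^\alpha$ with $|\alpha|$ up to $r(2\ell-2)$, so $u_n$ satisfies a recurrence of order up to $(\ell-1)\binom{r+r(2\ell-2)}{r}$, which is much larger than $K$. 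A recurrence of that order may perfectly well vanish on $K$ consecutive integers without vanishing identically, so $q=0$ does not follow. This is precisely the point the paper calls ``more delicate'': the space of polynomials $p$ is $K$-dimensional, but the generalized power sums obtained from them live in a much higher-dimensional space, so one cannot appeal to recurrence order.

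The paper's actual argument avoids this trap. Rather than bounding a recurrence order, it considers the $K\times K$ interpolation matrix $M_\calC(y,\lambda,n)$ with entries $\frac{1}{(1-y\lambda^j)^\alpha}(n+j)^\beta$ (with $y$ a formal parameter specializing to $\lambda^n$) and proves, by a minimality-and-row-expansion induction over subsets $\calC'\subseteq\calC$ of monomials ordered by $(|\alpha|,\beta)$, that its determinant $\Delta_\calC(y,\lambda,n)$ is not identically zero as a rational function of $y$. The separation of poles along $(1-y)^{\alpha'}$ isolates a cofactor and lets the induction close. Having nonvanishing of the determinant as a generalized power sum in $n$, the Lech--Mahler--Skolem theorem combined with the non-resonance of $\Lambda$ guarantees the determinant has only finitely many zeros, so the $K\times K$ system is uniquely solvable for all but finitely many starting points $n$. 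To complete your proof you would need to supply this (or an equivalent) non-degeneracy argument for the interpolation matrix; the recurrence-order shortcut does not work.
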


The above proposition gives a $t$-deformation of the perturbative series
$\Phi_\calT(h)$ which is different from the $x$-deformation of $\Phi_\calT(h)$ defined
in~\cite{DG1} and studied in detail~\cite{GGM:peacock}; for instance, see Equations
(123) and (238) for the $4_1$ and the $5_2$ knots, respectively. The $x$-deformed
series $\Phi_\calT(x,h)$ is reciprocal, i.e., satisfies
$\Phi_\calT(x^{-1},h)=\Phi_\calT(x,h)$, as follows from Weyl duality, or from the fact
that $x$ denotes one of the two eigenvalues $x$ and $x^{-1}$ of the holonomy of the
meridian. On the other hand, the rational function of~\eqref{phirat} is not invariant
under $t \mapsto t^{-1}$ as the case of $\ell=2$ and the $5_2$ knot illustrates.

The above corollary determines the polynomial $p_{\calT,\ell}(x,y)$ from 
finitely many values of the $\ell$-loop invariants of cyclic covers, together with
the set $\Lambda$. The next proposition removes the assumption that $\Lambda$ is known,
at the cost of using infinitely many values of the $\ell$-loop invariants of
cyclic covers. This is a generalization of some results of Fried and Hillar who
recover a palindromic polynomial with no cyclotomic factors from its cyclic
resultants~\cite{Fried:cyclic,Hillar}. Its proof uses asymptotics, much in
the spirit of recovering the Poincar\'e map from the asymptotics of the
wave-trace functions~\cite{Guillemin,Ianchenko}. 

\begin{proposition}
\label{prop.determines}
\rm{(a)}
Let $R(x,y) \in \BC(x)[y]$ be a rational function, regular at $x=0$, where
$x=(x_1,\dots,x_r)$ and $\Lambda_+=\{\lambda_1, \ldots,\lambda_r\}$  be a
multiplicatively independent set of nonzero complex numbers with absolute
values less than 1. Then the rational function $R(x,y)$ and the set $\Lambda_+$
are uniquely determined by infinitely many values of
$R(\lambda_1^n,\dots,\lambda_r^n,n)$.
\newline
\rm{(b)} The polynomial  $p_{\calT,\ell}$ is determined by infinitely many values
of $\Phi_{ \mathcal{T}^{(n)},\ell}$ for each fixed $\ell$.
\end{proposition}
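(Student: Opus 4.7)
The plan is to prove (a) by asymptotic peeling and deduce (b) by a rational substitution. Write $R$ as a Taylor series around $x=0$,
\bea
R(x,y) \;=\; \sum_{\alpha \in \BZ^{r}_{\ge 0}} c_\alpha(y)\, x^\alpha,
\eea
with $c_\alpha \in \BC[y]$ of degree at most $\deg_y R$, convergent in some polydisc $|x_j|<\rho$. Since $|\lambda_j|<1$, for all sufficiently large $n$,
\bea
f(n) \;:=\; R(\lambda^n,n) \;=\; \sum_{\alpha} c_\alpha(n)\,(\lambda^\alpha)^n,
\eea
is an absolutely convergent generalized power sum in $n$. Multiplicative independence of $\Lambda_+$ makes $\alpha\mapsto\lambda^\alpha$ injective, and $|\lambda_j|<1$ forces the moduli $|\lambda^\alpha|$ to accumulate only at $0$, so for every $\epsilon>0$ only finitely many $\alpha$ satisfy $|\lambda^\alpha|\ge\epsilon$.

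The recursive peeling step first extracts $c_0(y)=R(0,y)$ from $f(n)-c_0(n)=O(r^n)$ for some $r<1$: among polynomials of degree $\le\deg_y R$ this is uniquely pinned down by exponential closeness to $f$. Then $\rho_1:=\limsup_n|f(n)-c_0(n)|^{1/n}$ equals the largest $|\lambda^\alpha|$ with $c_\alpha\ne 0$; the finitely many multi-indices realising this modulus contribute a \emph{finite} generalized power sum on the circle $|z|=\rho_1$, whose bases and polynomial coefficients are read off by Prony-/Hankel-type analysis of $(f(n)-c_0(n))/\rho_1^n$. Subtracting and iterating, one exponential scale at a time, produces every nonzero pair $(\lambda^\alpha,c_\alpha)$. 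Rationality of $R$ in $x$ supplies a finite linear recurrence satisfied by $(c_\alpha)$ (coming from the denominator of $R$), so only finitely many of these scales constitute independent data.

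With all pairs recovered, the multiplicative relations $\lambda^{\alpha+\beta}=\lambda^\alpha\lambda^\beta$ among the bases identify the rank-$r$ group $\Gamma=\langle\Lambda_+\rangle\subset\BC^*$, and the $\BZ$-basis of $\Gamma$ to which $R$ is adapted singles out the individual generators $\lambda_j=\lambda^{e_j}$, yielding $\Lambda_+$; re-indexing the $c_\alpha$ by their multi-indices then reconstructs $R$. Part (b) follows by applying (a) to $R(x,y):=p_{\calT,\ell}\bigl(\tfrac{1}{1-x_1},\dots,\tfrac{1}{1-x_r},y\bigr)$, which is rational in $x$, regular at $x=0$, and equal to $\Phi_{\calT^{(n)},\ell}$ when $x_j=\lambda_j^n$ by Theorem~\ref{thm.phi}; palindromicity of $\delta_\calT(t)$ lets us replace each $\lambda_j$ by whichever of $\lambda_j^{\pm1}$ lies in the open unit disc, arranging the hypothesis $|\lambda_j|<1$.

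The main obstacle is the identification of the \emph{individual} generators $\lambda_j$ (and the integer $r$) from the multiset $\{\lambda^\alpha:c_\alpha\ne 0\}$. This requires the support of $R$ to generate $\BZ^r$ additively, a reducedness hypothesis without which $(R,\Lambda_+)$ fails to be uniquely determined --- for example $R(x_1,x_2)=x_1x_2$ with $\Lambda_+=\{\lambda_1,\lambda_2\}$ and $R'(x'_1)=x'_1$ with $\Lambda'_+=\{\lambda_1\lambda_2\}$ produce identical sequences. In the setting of (b) this reducedness is automatic from the shape of $p_{\calT,\ell}$: after the substitution $x_j\mapsto 1/(1-x_j)$ every power $\lambda^\alpha$ appears in the Taylor expansion, and concretely \eqref{2loopn} contains the linear terms $c_i x_i$, so each $\lambda_i$ shows up directly as a base in the asymptotic expansion and the recovery goes through.
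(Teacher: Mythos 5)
Your peel-by-exponential-scale argument for part (a) is essentially the paper's proof: both expand $R$ in $\BC[[x]][y]$, observe that the moduli $|\lambda^\alpha|$ accumulate only at $0$, and recover the finite generalized power sum at each scale by iterated asymptotic subtraction (the paper phrases the recovery step via Nilsson-type asymptotic expansions, you via Prony/Hankel analysis, but these amount to the same operation on finite generalized power sums). The deduction of (b) by composing with $x_j\mapsto 1/(1-x_j)$, choosing the root of each $\pm$-pair in the unit disc, is also the intended route.

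The reducedness issue you raise at the end is, however, a genuine gap --- and it is present in the paper's own proof, not only in yours. The iterated peeling recovers the pairs $(\lambda^\alpha, a_\alpha(y))$ for $\alpha$ in the \emph{support} of $R$, but the paper's concluding clause that this ``determines \dots the set $\calE$ (and hence $\Lambda_+$)'' is not justified: your counterexamples ($R(x_1,x_2)=x_1x_2$ with $\Lambda_+=\{\lambda_1,\lambda_2\}$ versus $R'(x')=x'$ with $\Lambda'_+=\{\lambda_1\lambda_2\}$; or already with $r$ fixed, $R(x)=x^2$ with $\Lambda_+=\{\lambda\}$ versus $R'(x')=x'$ with $\Lambda'_+=\{\lambda^2\}$) show that the sequence $R(\lambda^n,n)$ need not determine the pair $(R,\Lambda_+)$. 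So Proposition~\ref{prop.determines}(a) as stated requires an extra hypothesis, e.g.\ that the support of the Taylor expansion of $R$ at $0$ additively generates $\BZ^r$. Your proposed rescue of part (b) is also not airtight: you claim that after substituting $x_j\mapsto 1/(1-x_j)$ every power appears, but this tacitly uses that the relevant coefficients of $p_{\calT,\ell}$ (e.g.\ the $c_i$ in~\eqref{2loopn}) are nonzero, which neither you nor Theorem~\ref{thm.phi} establishes; the Taylor coefficients of the composed rational function could in principle vanish by cancellation. Closing the gap for (b) would require a separate nonvanishing statement about the coefficients of $p_{\calT,\ell}$, which is not in the paper.
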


Finally, we mention that the structure of Equation~\eqref{px} of perturbative quantum
invariants of cyclic covers is a very general (even if unnoticed) statement of
perturbative quantum field theory.
In particular, one can define asymptotically multiplicative quantum
invariants using perturbation theory of the trivial connection, as described in the 
the power series expansion of the colored Jones polynomial and the
Le--Murakami--Ohtsuki invariant~\cite{LMO}, as well as perturbation theory at abelian
$\mathrm{SU}(2)$-connections, as described by the rational form of the Kontsevich
integral of a knot by Kricker~\cite{Ga-Kricker:rational}. We will discuss this
subject, of a slightly different flavor, in a later investigation. 


\section{A review of the loop invariants}

Let $M$ be an oriented 1-cusped 3-manifold and $\calT$ be an ideal triangulation of
$M$ with $N$ tetrahedra $\Delta_1,\ldots,\Delta_N$ and with $N$ edges $e_1,\ldots, e_N$.
The shape of $\Delta_j$ is described by one complex variable
$z_j \in \BC \setminus \{0,1\}$ and each edge of $\Delta_j$ is assigned with one of the
following parameters with opposite edges having same parameters:
\bea
z_j, \quad z'_j := \frac{1}{1-z_j},  \quad  z''_j := 1- \frac{1}{z_j} \, .
\eea
The shapes $z=(z_1,\ldots,z_N)$ satisfy a system of equations, one per every edge of
$\calT$, hence $N$ equations. Precisely, the equation for an edge $e_i$ is given by
\be \label{eqn.gluing}
\prod_{j=1}^{N} {z_j}^{\mathbf{G}_{ij}}\prod_{j=1}^{N}{z'_j}^{\mathbf{G}'_{ij}}
\prod_{j=1}^{N} {z''_j}^{\mathbf{G}''_{ij}}  = 1 
\ee
where $\mathbf{G}_{ij}$ (resp., $\mathbf{G}'_{ij}, \mathbf{G}''_{ij}$) is the number
of edges of $\Delta_j$ which are incident to $e_i$ in $\calT$ and have shape parameter
$z_j$ (resp., $z'_j, z''_j$). The exponents of~\eqref{eqn.gluing} form three 
$N \times N$ integral matrices $\mathbf{G}, \mathbf{G}'$ and $\mathbf{G}''$ which are
known to be singular. To remove such singularity, we choose a peripheral curve
$\gamma$ of $M$ and replace the last  row of $\mathbf{G}, \mathbf{G}'$ and
$\mathbf{G}''$ (one edge equation) by its completeness equation.
We denote by $\mathbf{G}_\gamma, \mathbf{G}'_\gamma$ and $\mathbf{G}''_\gamma$
the resulting three $N\times N$ integral matrices accordingly. 
Following~\cite{NZ}, we set
\bea
\begin{array}{lll}
\mathbf{A} = \mathbf{G} - \mathbf{G}'\, , &
\mathbf{B} = \mathbf{G}''- \mathbf{G}' \, ,\\[2pt]
\mathbf{A}_\gamma = \mathbf{G}_\gamma - \mathbf{G}'_\gamma \, , &
\mathbf{B}_\gamma = \mathbf{G}''_\gamma- \mathbf{G}'_\gamma 
\end{array}
\eea
and refer to $\mathbf{A}$ and $\mathbf{B}$ (resp., $\mathbf{A}_\gamma$ and
$\mathbf{B}_\gamma$) as Neumann-Zagier matrices of $\calT$ (with respect to $\gamma$).
Finally,  the propagator matrix of $\calT$ with respect to $\gamma$ is defined by
 (\cite{DG1})
\bea
\Pi_{\gamma} = \hbar \left(- \mathbf{B}_\gamma^{-1} \mathbf{A}_\gamma
+ \Delta_{z'} \right)^{-1} 
\eea
where $\hbar$ is a formal variable and  $\Delta_{z'}$ is a diagonal matrix with
diagonal $z'=(z'_1,\ldots, z'_N)$.
  
In what follows, we fix a peripheral curve $\gamma$ of $M$.
Let $G$ be a connected graph with vertex set $V(G)$ and edge set $E(G)$. 
The Feynman rule defines a weight $W_{\calT}(G;\iota)$  for a 
vertex-labeling $\iota : V(G) \rightarrow \{1,\ldots,N\}$ 
\be \label{eqn.feynman2}
W_{\calT}(G;\iota) =\prod_{v\in V(G)}
\Gamma^{(d(v))}_{\iota(v)} \prod_{(v,v') \in E(G)}
\left(\Pi_\gamma \right)_{\iota(v),\, \iota(v')} 
\ee
where $\Gamma^{(d(v))}_{\iota(v)}$  is a rational function in the
shape parameter $z_{\iota(v)}$ depending on the  degree $d(v)$ of $v$ 
(its precise definition will not be needed in this paper) and $(v,v') \in E(G)$ is 
an edge joining $v$ and $v'$. Note that the edge orientation does not matter here, 
as $\Pi_\gamma$ is a symmetric matrix. We refer to \cite[Section 1.6]{DG1} for details. 
Also, a weight $W_{\calT}(G)$ associated to $G$ is defined by 
\be 
\label{eqn.feynman}
W_{\calT}(G) = \frac{1}{ \sigma(G) } \sum _\iota W_{\calT}(G;\iota)
\ee
where the sum is over all vertex-labelings $\iota :V(G) \rightarrow \{1,\ldots, N \}$
and $\sigma(G)$ is the symmetry factor of $G$.

\begin{definition}[\cite{DG1}]
The $\ell$-loop invariant of $\calT$ (with respect to $\gamma$) is defined  by
\be
\label{eqn.lloop}
\Phi_{\calT\!,\ell} =  \mathrm{coeff} \left[
\sum_{G \in \mathcal{G}_\ell} W_{\calT}(G),\,\hbar^{\ell-1} \right]
+ \Gamma^{(0)}
\ee
where $\Gamma^{(0)}$ is the vacuum contribution (see \cite[Eqn 1.18]{DG1}) and 
\bea
\mathcal{G}_\ell =
\{ G: \# \textrm{(1-vertices)} + \# \textrm{(2-vertices)} +  \# \textrm{(loops)}
\leq \ell \} \, .
\eea
Here $\mathrm{coeff}[f(\hbar), \hbar^m]$ denotes  the coefficient of $\hbar^m$ in
a power series $f(\hbar)$.
\end{definition}


\section{Flows and loop invariants of cyclic covers}
\label{sec.flow}

In this section we explain how to express the loop invariants of $n$-cyclic covers
as a sum over Feynmann diagrams of graphs with $\BZ /n\BZ$-flows; see
Theorem~\ref{thm.feynman} below. This is possible since the NZ matrices of cyclic
covers are expressed in terms of the twisted NZ matrices using circulant matrices,
as was found in~\cite{GY21}. We explain this first.

\subsection{Twisted NZ matrices}
  
Let $M$ be an oriented 1-cusped 3-manifold with an ideal triangulation $\calT$
and a surjective morphism $\alpha : \pi_1(M)\rightarrow \BZ$. 
We denote by $M^{(\infty)}$ the infinite cyclic cover of $M$ corresponding to the
kernel of $\alpha$ and $\calT^{(\infty)}$ the ideal triangulation of $M^{\!(\infty)}$
induced from $\calT$. 
Similarly, we denote by $M^{(n)}$ the finite $n$-cyclic cover corresponding to the
subgroup $\alpha^{-1}(n \BZ)$ and $\mathcal{T}^{(n)}$ the ideal triangulation of
$M^{(n)}$ induced from $\calT$.

We choose a fundamental domain of $M$ in $M^{(\infty)}$ and denote by 
$\tilde{e}_i$ (resp., $\tilde{\Delta}_j$) the lift of an edge $e_i$ (resp., a
tetrahedron $\Delta_j$) of $\calT$ to the fundamental domain.  
We also choose a generator $t$ of the deck transformation group $\BZ$ of
$M^{(\infty)}$ so that every tetrahedron of $\calT^{(\infty)}$
is given by $t^k \cdot \tilde{\Delta}_j$ for $k \in \BZ$ and $1 \leq j \leq N$.
We then define an $N\times N$ integral matrix $\mathbf{G}_k$ (resp.,
$\mathbf{G}'_k, \mathbf{G}''_k$) for $k \in \BZ$  by letting its $(i,j)$-entry be the 
number of edges of $t^k \cdot \tilde{\Delta}_j$ which are incident to $\tilde{e}_i$
in $\calT^{(\infty)}$ and have shape parameter $z_j$ (resp., $z'_j, z''_j$). 
Clearly,  $\mathbf{G}_k, \mathbf{G}'_k$ and $\mathbf{G}''_k$ are zero matrices
for all but finitely many $k \in \BZ$.
It follows that (below we view $t$ as a formal variable)
\be
\label{eqn.twistedgluing}
\mathbf{A}(t):=\sum_{k \in \BZ} (\mathbf{G}_k -\mathbf{G}'_k) \, t^k \quad
\textrm{and} \quad   
\mathbf{B}(t):=\sum_{k \in \BZ} (\mathbf{G}''_k -\mathbf{G}'_k) \, t^k 
\ee
are $N \times N$ matrices with entries in $\BZ[t^{\pm1}]$.
We call $\mathbf{A}(t)$ and $\mathbf{B}(t)$ twisted Neumann-Zagier 
matrices of $\calT$. See \cite{GY21} for some basic properties of twisted
Neumann-Zagier matrices.

The twisted Neumann-Zagier matrices $\mathbf{A}(t)$ and $\mathbf{B}(t)$ 
determine the Neumann-Zagier matrices  $\mathbf{A}^{\!(n)}$  and $\mathbf{B}^{(n)}$
of $\calT^{(n)}$ for all $n \geq 1$.
Precisely, for $\mathbf{X} \in \{\mathbf{A},\mathbf{B}\}$ 
\bea
\mathbf{X}^{(n)} = 
\begin{pmatrix*}[c]
\sum_{k \equiv 0} \mathbf{X}_k & \sum_{k \equiv 1}\mathbf{X}_k
& \cdots & \sum_{k \equiv n-1} \mathbf{X}_k \\[1pt]
\sum_{k \equiv n-1} \mathbf{X}_k & \sum_{k \equiv 0} \mathbf{X}_k
& \cdots & \sum_{k \equiv n-2} \mathbf{X}_k\\[1pt]
\vdots & \vdots & \ddots & \vdots \\[1pt]
\sum_{k \equiv 1} \mathbf{X}_k & \sum_{k \equiv 2} \mathbf{X}_k
& \cdots & \sum_{k \equiv 0} \mathbf{X}_k
\end{pmatrix*}, \quad \mathbf{X}(t)=\sum_{k \in \BZ} \mathbf{X}_k \, t^k
\eea
where $\equiv$ means the equality of integers in modulo $n$. In particular,
$\mathbf{X}^{(1)} = \mathbf{X}(1)$ and $\mathbf{X}^{(n)} $ is a block circulant 
matrix for $n \geq 2$. In what follows, we omit the superscript $(n)$ for $n=1$.

\subsection{Block diagonalizations of NZ matrices}

Let $\mu$ and $\lambda$ be peripheral curves of $M$ satisfying $\alpha(\mu)=1$ 
and $\alpha(\lambda)=0$.  We refer to $\mu$ (resp., $\lambda$) as the meridian (resp.,
longitude). Note that $\mu^n$ and $\lambda$ represent peripheral curves of
$M^{(n)}$.  For notational convenience we will confuse $\mu^n$ with  $\mu$; 
for instance,  we simply write $\mathbf{A}^{\!(n)}_{\mu}$ and $\mathbf{B}^{(n)}_{\mu}$
 instead of the Neumann-Zagier matrices  $\mathbf{A}^{\!(n)}_{\mu^n}$ and 
$\mathbf{B}^{(n)}_{\mu^n}$ of $\calT^{(n)}$ with respect to $\mu^n$.

\begin{theorem}
\label{thm.diagonalization}
$(\mathbf{B}^{(n)}_\gamma)^{-1}\mathbf{A}^{\! (n)}_\gamma$ is a block
circulant matrix for $n \geq 2$ and $\gamma \in \{\mu,\lambda\}$.
Moreover, 
\bea
V \, (\mathbf{B}^{(n)}_\gamma)^{-1} \mathbf{A}^{\! (n)}_\gamma \, V^{-1} =
\begin{pmatrix}
\mathbf{B}_\gamma^{-1}\mathbf{A}_\gamma &  & & \\
&  \mathbf{B}(\omega)^{-1} \mathbf{A}( \omega) & & \\
&  &  \ddots & \\
&   &   &  \mathbf{B}(\omega^{n-1})^{-1} \mathbf{A}(\omega^{n-1})
\end{pmatrix} 
\eea
where $\omega = e^\frac{2 \pi \sqrt{-1}}{n}$ and  $V$ is a block Vandermonde
matrix given as in~\eqref{eqn.vandermonde}.
\end{theorem}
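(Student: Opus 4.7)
The plan is to conjugate by the block Vandermonde matrix $V$ and show that both $\mathbf{A}^{(n)}_\gamma$ and $\mathbf{B}^{(n)}_\gamma$ become block diagonal, with diagonal blocks $\mathbf{A}_\gamma, \mathbf{A}(\omega), \ldots, \mathbf{A}(\omega^{n-1})$ and $\mathbf{B}_\gamma, \mathbf{B}(\omega), \ldots, \mathbf{B}(\omega^{n-1})$ respectively. The block circulance of $(\mathbf{B}^{(n)}_\gamma)^{-1}\mathbf{A}^{(n)}_\gamma$ is then automatic, since conjugation of a block diagonal matrix by $V^{-1}$ produces a block circulant one.

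First, I would invoke the explicit block-circulant formula for $\mathbf{X}^{(n)}$ with $\mathbf{X} \in \{\mathbf{A}, \mathbf{B}\}$ recorded earlier in this section: it displays $\mathbf{A}^{(n)}$ and $\mathbf{B}^{(n)}$, the matrices prior to the peripheral replacement, as block circulant. The classical diagonalization of block circulant matrices then gives $V \mathbf{X}^{(n)} V^{-1} = \diag(\mathbf{X}(1), \mathbf{X}(\omega), \ldots, \mathbf{X}(\omega^{n-1}))$.

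Second, I would analyze the peripheral correction $\mathbf{X}^{(n)}_\gamma - \mathbf{X}^{(n)}$. The key geometric input is that, under the cyclic cover, the peripheral curve $\gamma$ lifts in a deck-invariant manner: $\mu$ lifts to $\mu^n$, and $\lambda$ admits a cyclic-symmetric representative among its lifts. Consequently, the completeness row defining $\mathbf{X}^{(n)}_\gamma$, viewed as a vector in $\BC^{nN}$ indexed by lifted tetrahedra, can be taken in the cyclic-invariant form $(v,v,\ldots,v)$, where $v \in \BC^N$ is the base completeness row used in forming $\mathbf{X}_\gamma$. Independence of $\mathbf{B}_\gamma^{-1}\mathbf{A}_\gamma$ on the choice of representative follows from Neumann--Zagier gauge invariance: the matrix $\mathbf{B}^{-1}\mathbf{A}$ does not depend on which dependent edge equation is dropped.

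Third, I would use the fact that the Fourier transform of a cyclic-invariant vector $(v,v,\ldots,v)$ is supported entirely in the zero-frequency block. Hence the rank-one modification $\mathbf{X}^{(n)}_\gamma - \mathbf{X}^{(n)}$ affects only the $(1,1)$-block of $V\mathbf{X}^{(n)}V^{-1}$, turning $\mathbf{X}(1)$ into $\mathbf{X}_\gamma$ and leaving the other diagonal blocks unchanged. Therefore $V\mathbf{X}^{(n)}_\gamma V^{-1} = \diag(\mathbf{X}_\gamma, \mathbf{X}(\omega), \ldots, \mathbf{X}(\omega^{n-1}))$, and the theorem follows by blockwise inversion of $\mathbf{B}^{(n)}_\gamma$ and multiplication with $\mathbf{A}^{(n)}_\gamma$.

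The main obstacle is step two, where one must carefully justify the cyclic-invariant form of the peripheral row. For $\gamma = \mu$ this is essentially automatic: $\mu^n$ passes once through each of the $n$ cyclic copies of the boundary triangulation, so each block-row contribution coincides. For $\gamma = \lambda$ the situation is more delicate because $\lambda$ has multiple lifts to the cusp of $M^{(n)}$; the symmetrized (or equivalently, any single) representative must be shown to yield the same matrix $\mathbf{B}_\lambda^{-1}\mathbf{A}_\lambda$, which is precisely the content of the Neumann--Zagier gauge invariance.
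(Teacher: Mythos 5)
Your plan to conjugate each of $\mathbf{A}^{(n)}_\gamma$ and $\mathbf{B}^{(n)}_\gamma$ by $V$ and show that \emph{each becomes block diagonal} cannot work: it is simply false, and the place where it breaks is exactly the content of the theorem. The obstruction is your step 3. Write the peripheral correction as a matrix supported on the last ($N$-th) row of the last block, say $\mathbf{O}[w]$ with $w = (w_1,\dots,w_n)$ the concatenation of $N$-vectors. A direct computation gives
\begin{equation*}
(V\,\mathbf{O}[w]\,V^{-1})_{ij} \;=\;
\omega^{(i-1)(n-1)}\,\mathbf{O}\Big[\tfrac{1}{n}\sum_{l=1}^n \omega^{-(l-1)(j-1)}\,w_l\Big],
\end{equation*}
so even when $w=(v,\dots,v)$ is cyclic-invariant the result is \emph{not} supported in the $(1,1)$-block: it fills the entire first block \emph{column}, since the exterior factor $\omega^{(i-1)(n-1)}$ depends on $i$. (The one-sided Fourier inversion in your intuition handles right-multiplication by $V^{-1}$, but you are conjugating, and the left-multiplication by $V$ spreads the single nonzero block-row over all block-rows.) Worse, the correction you actually propose, namely the replace-a-row difference $\mathbf{X}^{(n)}_\gamma - \mathbf{X}^{(n)}$, has $w_l$ equal to the completeness row minus the discarded edge row, and the discarded edge row of the block-circulant $\mathbf{X}^{(n)}$ is \emph{not} cyclic-invariant; so the $w_l$ are not all equal, and $V(\mathbf{X}^{(n)}_\gamma - \mathbf{X}^{(n)})V^{-1}$ is not even supported in the first block column.

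The paper's proof accepts that neither conjugated factor is block diagonal and instead establishes the statement for the product. Two inputs are essential and are missing from your proposal. First, the symplectic property of Neumann--Zagier matrices is used to prove the identity
$(\mathbf{B}^{(n)}_\gamma)^{-1}\mathbf{A}^{(n)}_\gamma = (\mathbf{B}^{(n)}+\mathbf{O}[b_{\gamma^n}])^{-1}(\mathbf{A}^{(n)}+\mathbf{O}[a_{\gamma^n}])$, replacing the ``drop one edge row'' operation by an \emph{additive} perturbation that \emph{does} have a cyclic-invariant last row (this is equation~\eqref{eqn.a1} and it is not a formality). Second, after conjugating by $V$, both factors are block lower-triangular with the expected diagonal blocks plus a nontrivial first block column; in the product, the off-diagonal blocks $D_k$ of the first column are shown to vanish because of the last-row identity $b_\gamma\,\mathbf{B}_\gamma^{-1}\mathbf{A}_\gamma = a_\gamma$. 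Without this cancellation argument (or some equivalent use of the symplectic structure), the claim does not follow from linear algebra about circulants alone, and your proof as written stops short.
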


As a consequence, the propagator matrix of $\calT^{(n)}$  with respect to
$\gamma \in \{\mu,\lambda\}$
\be
\label{eqn.propagator}
\Pi_\gamma^{(n)} = \hbar \left(- (\mathbf{B}^{(n)}_\gamma)^{-1} 
\mathbf{A}^{(n)}_\gamma + \Delta^{\!(n)}_{z'} \right)^{-1} 
\ee
admits a block diagonalization in terms of 
\bea
\Pi(t) := \hbar \, (- \mathbf{B}(t)^{-1} \mathbf{A}(t)+ \Delta_{z'})^{-1} \, .
\eea
Here $\Delta_{z'}^{\!(n)}$ is the diagonal matrix whose diagonal is $n$ times
repetitions of $z'$.

\begin{corollary}
\label{cor.diagonalization}
We have
\be
\label{eqn.block}
V \, \Pi_\gamma^{(n)} \, V^{-1}= \begin{pmatrix}
\Pi_\gamma &  & & \\
&  \Pi(\omega) & & \\
&  &  \ddots & \\
&   &   &  \Pi(\omega^{n-1})
\end{pmatrix} 
\ee
for $\gamma \in \{\mu,\lambda\}$.
\end{corollary}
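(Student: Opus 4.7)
The plan is to reduce the statement directly to Theorem~\ref{thm.diagonalization} by checking that the second summand $\Delta^{(n)}_{z'}$ inside the bracket in \eqref{eqn.propagator} is itself block diagonalized by $V$ in a trivial way, and then appealing to the fact that conjugation, inversion, and scalar multiplication all preserve block diagonal structure.

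First, I would rewrite $\Delta^{(n)}_{z'}$ as a tensor product. By definition its diagonal consists of $n$ repetitions of $z' = (z'_1,\ldots,z'_N)$, so
\[
\Delta^{(n)}_{z'} \;=\; I_n \otimes \Delta_{z'},
\]
i.e.\ the block diagonal matrix whose $n$ diagonal blocks are all equal to $\Delta_{z'}$. The block Vandermonde matrix $V$ used in Theorem~\ref{thm.diagonalization} has the form $V = F \otimes I_N$, where $F$ is the scalar Vandermonde built from $1,\omega,\ldots,\omega^{n-1}$ (this is exactly the structure of the $V$ appearing in \eqref{eqn.vandermonde}). Since $F \otimes I_N$ and $I_n \otimes \Delta_{z'}$ act on disjoint tensor factors, they commute, so
\[
V\,\Delta^{(n)}_{z'}\,V^{-1} \;=\; I_n \otimes \Delta_{z'} \;=\; \mathrm{diag}\bigl(\Delta_{z'},\,\Delta_{z'},\,\ldots,\,\Delta_{z'}\bigr).
\]

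Next I would combine this with Theorem~\ref{thm.diagonalization}. Adding the block diagonal $V\,\Delta^{(n)}_{z'}\,V^{-1}$ to the block diagonal matrix produced by Theorem~\ref{thm.diagonalization} and negating the latter yields
\[
V\bigl(-(\mathbf{B}^{(n)}_\gamma)^{-1}\mathbf{A}^{(n)}_\gamma + \Delta^{(n)}_{z'}\bigr)V^{-1}
\;=\;
\mathrm{diag}\bigl(-\mathbf{B}_\gamma^{-1}\mathbf{A}_\gamma + \Delta_{z'},\; -\mathbf{B}(\omega)^{-1}\mathbf{A}(\omega) + \Delta_{z'},\;\ldots,\; -\mathbf{B}(\omega^{n-1})^{-1}\mathbf{A}(\omega^{n-1}) + \Delta_{z'}\bigr).
\]
A block diagonal matrix is invertible iff each of its diagonal blocks is invertible, and in that case its inverse is block diagonal with the inverses as blocks. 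Multiplying by the scalar $\hbar$ and comparing with the definitions of $\Pi_\gamma$ and $\Pi(t)$ now gives \eqref{eqn.block}.

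The only real content is the tensor-product commutation $V\Delta^{(n)}_{z'} = \Delta^{(n)}_{z'}V$, which is the main (and genuinely very mild) obstacle; everything else is formal. Implicit here is the invertibility of each diagonal block $-\mathbf{B}(\omega^k)^{-1}\mathbf{A}(\omega^k) + \Delta_{z'}$ for $k=1,\ldots,n-1$, which is guaranteed in the setting of the paper since $\Pi(t)$ is assumed to be well-defined at each $n$-th root of unity (and automatic for those $n$ omitted in the ``all but finitely many'' statements elsewhere in the paper).
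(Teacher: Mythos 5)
Your proof is correct and takes essentially the same route as the paper: the paper's argument consists precisely of observing that $V$ commutes with $\Delta^{(n)}_{z'}$ and then invoking Theorem~\ref{thm.diagonalization}, and you simply make the commutation explicit via the tensor-product identification $V = F \otimes I_N$, $\Delta^{(n)}_{z'} = I_n \otimes \Delta_{z'}$. The extra remarks about block-diagonal inversion and invertibility of the diagonal blocks are harmless elaborations of the same formal step.
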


\begin{remark}
\label{rmk.lambda} 
The proof of  \cite[Theorem 1.7]{GY21} shows that 
$\mathbf{X}_\lambda = P(t) \mathbf{X}(t)|_{t=1}$ where
\bea
P(t) =  \begin{pmatrix}
1 &  & & \\
& 1 & & \\
&  &  \ddots & \\
\frac{t^{p_1} + t^{q_1}}{t-1}&  \frac{t^{p_2} + t^{q_2}}{t-1} & \cdots
&  \frac{t^{p_N} + t^{q_N}}{t-1}
\end{pmatrix} 
\eea
for some integers $p_i$ and $q_i$.
Hence, $\mathbf{B}_\lambda^{-1} \mathbf{A}_\lambda$ equals to
$\mathbf{B}(t)^{-1} \mathbf{A}(t)$ at $t=1$, and
$\Pi_\lambda = \Pi(1)$.	
This shows that Equation~\eqref{eqn.block} admits full cyclic symmetry for
$\gamma=\lambda$.
\end{remark}

\subsection{Feynman diagrams with flows}
\label{sub.flows}

Let $G$ be a connected graph with vertex set $V(G)$ and edge set $E(G)$.  
We fix an orientation of each edge of $G$ and regard an element of $E(G)$ 
as an oriented edge.
\begin{definition}
A $\BZ  /n\BZ$-\emph{flow} on $G$ is a map $\varphi : E(G) \rightarrow \BZ / n \BZ$
such that  for all $v \in V(G)$
\bea
\sum_{\substack{e \in E(G) \\ e \textrm{ into } v }} \varphi (e)
= \sum_{\substack{e \in E(G) \\ e \textrm{ out of } v }}  \varphi(e) \, .
\eea 
The set of $\BZ \! /n \BZ$-flows on $G$ is naturally an abelian group
isomorphic to $H_1(G; \BZ  / n \BZ) \simeq (\BZ   /n \BZ)^{d}$ where $d$ is
the first betti number of $G$.
\end{definition}

In what follows, we fix a peripheral curve $\gamma \in \{\mu,\lambda\}$.
As a generalizations of~\eqref{eqn.feynman2}, for a vertex-labeling
$\iota :V(G) \rightarrow \{1,\ldots,N\}$, we define a weight 
\be
\label{eqn.newdef1}
W^{(n)}_{\calT} (G;\iota ):=
\frac{1}{n^{d-1}} \sum_{\varphi}  \left( \prod_{v\in V(G)} \Gamma^{(d(v))}_{\iota(v)}
\prod_{(v,v') \in E(G)} \left(\Pi_{\varphi(v,v')}\right)_{\iota(v), \, \iota(v')}
\right)
\ee
where the sum is over all $\BZ \!  /n\BZ$-flows on $G$, $(v,v')$ is an oriented
edge of $G$ running from $v$ to $v'$, and
\bea
\Pi_{\varphi(e)} = 
\left\{
\begin{array}{ll}
\Pi_\gamma & \textrm{if }  \varphi(e)=0 \\[4pt]
\Pi (\omega^{\varphi(e)}) & \textrm{otherwise}  		
\end{array} 
\right. \, .
\eea
Also, as in~\eqref{eqn.feynman}, we set
\be \label{eqn.newdef2}
W^{(n)}_{\calT}(G) := \frac{1}{\sigma(G)}
\sum_{\iota} W_{\calT}^{(n)}(G;\iota)
\ee
where the sum is over all vertex-labelings $\iota : V(G)\rightarrow \{1,\ldots,N\}$.
We remark that for $n=1$ there is only one flow, the trivial one (assigning 0 to
all edges), hence \eqref{eqn.newdef1} and~\eqref{eqn.newdef2} reduce
to~\eqref{eqn.feynman2} and \eqref{eqn.feynman}, respectively.

\begin{lemma}
\label{lem.orientation}
$W^{(n)}_{\calT}(G)$ does not depend on the choice of edge
orientation of $G$. 
\end{lemma}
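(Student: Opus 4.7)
The plan is to show that reversing the orientation of a single edge $e$ of $G$ preserves $W^{(n)}_{\calT}(G;\iota)$ for every vertex-labeling $\iota$; since any two orientations of $G$ differ by flipping a subset of edges, this implies the lemma. With $e=(v,v')$ in $G$ and $\bar e=(v',v)$ in the reoriented graph $G'$, the assignment $\varphi\mapsto\varphi'$ defined by $\varphi'(\bar e)=-\varphi(e)$ and $\varphi'(e'')=\varphi(e'')$ for $e''\neq e$ is a bijection from $\BZ/n\BZ$-flows on $G$ to those on $G'$: at each endpoint of $e$ the exchange of ``incoming'' and ``outgoing'' is exactly compensated by the sign flip of the flow value, and the degenerate case $v=v'$ is automatic because a self-loop contributes $\pm\varphi(e)$ to both sides of the vertex balance.

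Under this bijection the vertex weights $\Gamma^{(d(v))}_{\iota(v)}$ and every edge weight except the one attached to $e$ are unchanged, so matching $W^{(n)}_{\calT}(G;\iota)$ with its analogue on $G'$ flow by flow reduces to the identity
\[
(\Pi_{\varphi(e)})_{\iota(v),\iota(v')} = (\Pi_{-\varphi(e)})_{\iota(v'),\iota(v)}.
\]
For $\varphi(e)=0$ this is the symmetry of $\Pi_\gamma$ already noted in the paragraph following~\eqref{eqn.feynman2}. For $\varphi(e)\neq 0$ it reduces, by setting $t=\omega^{\varphi(e)}$, to the single transpose identity $\Pi(t^{-1})=\Pi(t)^\top$, which in view of $\Pi(t)=\hbar(-\mathbf{B}(t)^{-1}\mathbf{A}(t)+\Delta_{z'})^{-1}$ and the fact that $\Delta_{z'}$ is diagonal is equivalent to
\[
\mathbf{A}(t)\,\mathbf{B}(t^{-1})^\top = \mathbf{B}(t)\,\mathbf{A}(t^{-1})^\top.
\]

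The main obstacle is establishing this twisted symplectic identity, the natural $t$-deformation of Neumann's relation $\mathbf{A}\mathbf{B}^\top=\mathbf{B}\mathbf{A}^\top$. I expect it to be listed among the basic properties of twisted Neumann--Zagier matrices in \cite{GY21}; failing that, I would deduce it from Theorem~\ref{thm.diagonalization} together with the symmetry of the untwisted propagator $(\mathbf{B}^{(n)}_\gamma)^{-1}\mathbf{A}^{(n)}_\gamma$ for every $n$. A block circulant matrix with blocks $M_0,\dots,M_{n-1}$ is symmetric precisely when $M_k=M_{n-k}^\top$, and applying the DFT that realizes the block-diagonalization gives $\bigl(\mathbf{B}(\omega^j)^{-1}\mathbf{A}(\omega^j)\bigr)^\top = \mathbf{B}(\omega^{-j})^{-1}\mathbf{A}(\omega^{-j})$ for all $n$-th roots of unity; letting $n$ vary produces a Zariski-dense set of $t\in\BC^\times$, whence the polynomial identity follows. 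Once this is in hand, the bijection-and-matching argument above is routine, and summing over $\iota$ yields $W^{(n)}_{\calT}(G)=W^{(n)}_{\calT}(G')$.
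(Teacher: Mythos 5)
Your proposal is correct and takes essentially the same approach as the paper: the lemma is reduced to the identity $\Pi(t^{-1})=\Pi(t)^{T}$ together with the symmetry of $\Pi_\gamma$, and the transpose identity you anticipate is indeed established in \cite[Theorem~1.2]{GY21}, which the paper cites directly. Your more explicit flow-bijection framing and the fallback derivation via Theorem~\ref{thm.diagonalization} are both sound but not needed.
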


\begin{example}
Let us consider a Feynman diagram $G$ with edge orientation as in
Figure~\ref{fig.diagram}. 
For notational simplicity we let a vertex-labeling $\iota$  assign the vertices
to $(i,j) \in \{1,\ldots,N\}^2$ and  a map $\varphi$ assign the oriented edges
to $(a,b,c) \in (\BZ/n \BZ)^3$ as in Figure~\ref{fig.diagram}. 

\begin{figure}[htpb!]
\begingroup%
  \makeatletter%
  \providecommand\color[2][]{%
    \errmessage{(Inkscape) Color is used for the text in Inkscape, but the package 'color.sty' is not loaded}%
    \renewcommand\color[2][]{}%
  }%
  \providecommand\transparent[1]{%
    \errmessage{(Inkscape) Transparency is used (non-zero) for the text in Inkscape, but the package 'transparent.sty' is not loaded}%
    \renewcommand\transparent[1]{}%
  }%
  \providecommand\rotatebox[2]{#2}%
  \newcommand*\fsize{\dimexpr\f@size pt\relax}%
  \newcommand*\lineheight[1]{\fontsize{\fsize}{#1\fsize}\selectfont}%
  \ifx\svgwidth\undefined%
    \setlength{\unitlength}{96.47867879bp}%
    \ifx\svgscale\undefined%
      \relax%
    \else%
      \setlength{\unitlength}{\unitlength * \real{\svgscale}}%
    \fi%
  \else%
    \setlength{\unitlength}{\svgwidth}%
  \fi%
  \global\let\svgwidth\undefined%
  \global\let\svgscale\undefined%
  \makeatother%
  \begin{picture}(1,0.81025763)%
    \lineheight{1}%
    \setlength\tabcolsep{0pt}%
    \put(0,0){\includegraphics[width=\unitlength,page=1]{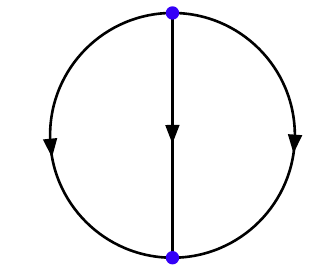}}%
    \put(0.54310392,0.81763728){\makebox(0,0)[lt]{\lineheight{1.25}\smash{\begin{tabular}[t]{l}$i$\end{tabular}}}}%
    \put(0.54050546,-0.06605175){\makebox(0,0)[lt]{\lineheight{1.25}\smash{\begin{tabular}[t]{l}$j$\end{tabular}}}}%
    \put(0.03312405,0.36834955){\makebox(0,0)[lt]{\lineheight{1.25}\smash{\begin{tabular}[t]{l}$a$\end{tabular}}}}%
    \put(0.93464229,0.37467556){\makebox(0,0)[lt]{\lineheight{1.25}\smash{\begin{tabular}[t]{l}$c$\end{tabular}}}}%
    \put(0.57429126,0.38584028){\makebox(0,0)[lt]{\lineheight{1.25}\smash{\begin{tabular}[t]{l}$b$\end{tabular}}}}%
  \end{picture}%
\endgroup%

\caption{A 2-loop Feynman diagram.}
\label{fig.diagram}
\end{figure}	

Since $\varphi$ is a flow if and only if $a+b+c \equiv 0$ in modulo $n$, we have
\begin{equation*}
W^{(n)}_{\calT}(G)  = \frac{1}{8n} \sum_{1 \leq i, j \leq N} \,
\sum_{a,b \,  \in \BZ \!  /n\BZ}
\Gamma_{i}^{(3)}\Gamma_{j}^{(3)} (\Pi_a)_{ij} (\Pi_b)_{ij} (\Pi_{-a-b})_{ij}\, .
\end{equation*}
Note that $\sigma(G)=8$. In particular, if the peripheral curve $\gamma$ is
chosen to be the longitude, we have (see Remark~\ref{rmk.lambda})
\begin{equation}
\label{eqn.example}
W^{(n)}_{\calT}(G) = \frac{1}{ 8n} \sum_{1 \leq i, j \leq N} \,
\sum_{a,b \,  \in \BZ \! /n\BZ} \Gamma_{i}^{(3)} \Gamma_{j}^{(3)} \Pi(\omega^a)_{ij}
\Pi(\omega^b)_{ij} \Pi(\omega^{-a-b})_{ij}  \, .
\end{equation}
\end{example}

\begin{theorem}
\label{thm.feynman}
The $\ell$-loop invariant of $\mathcal{T}^{(n)}$ satisfies
\be
\label{eqn.lloop2}
\Phi_{\calT^{(n)}\!, \, \ell}   =  \mathrm{coeff}
\left[ \sum_{G \in \mathcal{G}_\ell} W^{(n)}_{\calT}(G), \,\hbar^{\ell-1}
\right]+ \Gamma^{(0)}
\ee
for all $n \geq 1$.
\end{theorem}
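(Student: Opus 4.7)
The plan is to prove the equality for each Feynman graph $G \in \mathcal{G}_\ell$ individually, that is, to show $W_{\calT^{(n)}}(G) = W^{(n)}_{\calT}(G)$; summing over $G$ and extracting $\hbar^{\ell-1}$ coefficients then gives Equation~\eqref{eqn.lloop2} from the definition in Equation~\eqref{eqn.lloop}. The vacuum contribution $\Gamma^{(0)}$ is not affected by the Feynman sum, so it can be tracked separately. Since the shape parameters of $\calT^{(n)}$ are the $n$-fold repetition of those of $\calT$, I would first identify the index set $\{1,\dots,nN\}$ with pairs $(k,j) \in \BZ/n\BZ \times \{1,\dots,N\}$ and note that $\Gamma^{(d(v))}_{(k_v,j_v)} = \Gamma^{(d(v))}_{j_v}$ depends only on the tetrahedron label $j_v$, not on the sheet label $k_v$.

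The crux is to translate Corollary~\ref{cor.diagonalization} into an explicit formula for the propagator. Writing the block Vandermonde matrix as $V_{(a,j),(k,j')} = n^{-1/2}\omega^{ak}\delta_{jj'}$ and conjugating gives
\be
\label{eqn.proppointwise}
(\Pi_\gamma^{(n)})_{(k,j),(k',j')} = \frac{1}{n}\sum_{a=0}^{n-1} \omega^{a(k'-k)} \Pi_{a}(\omega)_{j,j'},
\ee
where $\Pi_0(\omega) := \Pi_\gamma$ and $\Pi_a(\omega) := \Pi(\omega^a)$ for $a \neq 0$. Substituting \eqref{eqn.proppointwise} into the definition of $W_{\calT^{(n)}}(G;\iota)$ and expanding the product over edges, each edge $e=(v,v')$ contributes an independent sum over $a_e \in \BZ/n\BZ$, yielding
\bea
W_{\calT^{(n)}}(G;\iota) = \frac{1}{n^{|E(G)|}} \prod_{v} \Gamma^{(d(v))}_{j_v} \sum_{\vec a : E(G) \to \BZ/n\BZ} \left(\prod_{v} \omega^{-k_v\, c_v(\vec a)}\right) \prod_{(v,v')} \Pi_{a_e}(\omega)_{j_v,j_{v'}},
\eea
with $c_v(\vec a) = \sum_{e \text{ out of } v} a_e - \sum_{e \text{ into } v} a_e$.

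Next, I would sum over the sheet labels $k_v \in \BZ/n\BZ$ for every $v \in V(G)$. By orthogonality of characters on $\BZ/n\BZ$, the $k$-sum evaluates to $n^{|V(G)|}$ precisely when $c_v(\vec a) = 0$ for all $v$, i.e.\ when $\vec a$ is a $\BZ/n\BZ$-flow, and vanishes otherwise. For a connected graph $G$ with first Betti number $d$, we have $|V(G)| - |E(G)| = 1 - d$, so the combined power of $n$ becomes $n^{1-d} = n^{-(d-1)}$, matching the normalization in Equation~\eqref{eqn.newdef1}. What remains is a sum over labelings $\iota : V(G) \to \{1,\dots,N\}$ of tetrahedra and over flows $\varphi : E(G) \to \BZ/n\BZ$, which is exactly $W^{(n)}_{\calT}(G;\iota)$ summed over $\iota$. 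Dividing by $\sigma(G)$ gives $W^{(n)}_{\calT}(G)$ as in Equation~\eqref{eqn.newdef2}.

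The main obstacle, as I see it, is bookkeeping rather than conceptual: one must carefully track the block-index conventions in the Vandermonde diagonalization (to get the correct sign in the phase $\omega^{a(k'-k)}$) and reconcile the sign convention in $c_v(\vec a)$ with the edge orientations so that the character-orthogonality step produces the flow constraint in the sense of Section~\ref{sub.flows}. The independence of edge orientation asserted in Lemma~\ref{lem.orientation} is a consistency check; it follows from the symmetry of each $\Pi(\omega^a)$ together with the fact that reversing the orientation of an edge simply negates its flow value, leaving the overall sum unchanged. For $\gamma = \mu$, one uses the convention $\Pi_0(\omega) = \Pi_\mu$ directly from Corollary~\ref{cor.diagonalization}; for $\gamma = \lambda$, Remark~\ref{rmk.lambda} gives the additional consistency $\Pi_\lambda = \Pi(1)$ so that the $a=0$ block fits seamlessly into the formula.
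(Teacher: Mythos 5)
Your proposal is correct and follows essentially the same route as the paper's proof: decompose the index set of $\calT^{(n)}$ into sheet labels and tetrahedron labels, use Corollary~\ref{cor.diagonalization} to write the entries of $\Pi_\gamma^{(n)}$ as discrete Fourier sums of $\Pi_k$, expand the product over edges into an independent edge sum, and apply character orthogonality over the sheet labels to produce the flow constraint together with the $n^{1-d}$ normalization via $|V|-|E|=1-d$. The only cosmetic deviation is your choice of a unitary Vandermonde matrix ($n^{-1/2}$ normalization) versus the paper's unnormalized one, which does not affect the resulting propagator formula.
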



\section{Proofs of the flow statements}

We devote this section to prove theorems in Section~\ref{sec.flow}.

\subsection{Some facts on block circulant matrices}

We here list some elementary facts on block circulant matrices.  We refer
to~\cite{Davis} for details. Let $C$ be a block circulant matrix of the form
\bea
C = \begin{pmatrix}
C_0 & C_1 & \cdots & C_{n-1} \\
C_{n-1} & C_0 & \cdots & C_{n-2} \\
\vdots & \vdots & \ddots & \vdots\\
C_1 & C_2 & \cdots & C_{0} 
\end{pmatrix} \,.
\eea
It is known that $C$ is block diagonalizable by conjugating a block Vandermonde
matrix
\begin{equation}
\label{eqn.vandermonde}
V = \begin{pmatrix}
I & I & \cdots & I \\
I & \omega I  & \cdots & \omega^{n-1}  I\\
\vdots & \vdots & \ddots & \vdots\\
I & \omega^{n-1}I & \cdots & \omega^{(n-1)(n-1)} I
\end{pmatrix} , \quad \omega = e^\frac{2 \pi \sqrt{-1}}{n} \,.
\end{equation}
Explicitly, we have
\bea
V C V^{-1} =  \begin{pmatrix}
r(\omega^0) &  & & \\
&  r(\omega^1)& & \\
&  &  \ddots & \\
&   &   & r(\omega^{n-1})
\end{pmatrix}
\eea
where $r(t) = C_0 + C_1 t + \cdots+ C_{n-1}t^{n-1}$ is the \emph{representer} of $C$. 
It follows that  one can recover the matrix $C$ from its representer $r(t)$ by
\begin{equation}
\label{eqn.recover}
C_i  = \frac{1}{n} \left( r(\omega^0) + \omega^i r(\omega^1) + \cdots
  + \omega^{i(n-1)} r(\omega^{n-1}) \right) 
\end{equation}
for $0 \leq i < n$.
Note that if $r(t)$ is constant, i.e. $C_1 = \cdots =C_{n-1} =0$, then $C$
commutes with $V$.

\subsection{Circulant structure of NZ matrices}

In this section we prove Theorem~\ref{thm.diagonalization} and its
Corollary~\ref{cor.diagonalization}. 

To begin the proof of Theorem~\ref{thm.diagonalization}, 
we first consider the case of $\gamma = \mu$.
Let $a_i$ (resp., $b_i$) be the $i$-the row of $\mathbf{A}^{\! (n)}$ (resp.,
$\mathbf{B}^{(n)}$) and $a_{\mu^n}$ (resp., $b_{\mu^n}$) be the last row of
$\mathbf{A}^{\!(n)}_{\mu}$ (resp., $\mathbf{B}^{(n)}_{\mu}$).
Recall that  $a_{\mu^n}$ and $b_{\mu^n}$ represent the completeness
equation of $\mu^n$, hence they are $n$-times repetitions of $a_{\mu}(=a_{\mu^1})$
and $b_{\mu}(=b_{\mu^1})$ respectively:
\bea
a_{\mu^n} = ( \underbrace{a_\mu \ \cdots \ a_\mu}_{n}) , \quad
b_{\mu^n} = ( \underbrace{b_\mu \ \cdots \ b_\mu}_{n}) \, .
\eea
For a vector $v$ we denote by $\mathbf{O}[v]$ a square matrix whose last row is $v$
and the other rows are trivial.
From the symplectic property of Neumann-Zagier matrices \cite[Theorem 2.2]{NZ}, we
have $a_i b_j^T = b_i a_j^T$ and $a_{\mu^n} \, b_j^T = b_{\mu^n} \, a_j^T$. 
It follows that
\bea
(\mathbf{A}^{\! (n)} + \mathbf{O}[{a_{\mu^n}}])\, (\mathbf{B}_\mu^{(n)})^T
=(\mathbf{B}^{(n)} + \mathbf{O}[b_{\mu^n}]) \, (\mathbf{A}_\mu^{\! (n)})^T 
\eea
and
\be
\label{eqn.a1}
(\mathbf{B}_\mu^{(n)})^{-1}  \mathbf{A}_\mu^{\!(n)}=
\left((\mathbf{B}_\mu^{(n)})^{-1}  \mathbf{A}_\mu^{\!(n)}\right)^T =(\mathbf{B}^{(n)}
+ \mathbf{O}[b_{\mu^n}])^{-1}(\mathbf{A}^{\!(n)} +  \mathbf{O}[a_{\mu^n}]).
\ee
On the other hand, a simple matrix computation shows that 
\bea
V \mathbf{O}[a_{\mu^n}] V^{-1} = \begin{pmatrix}
\mathbf{O}[a_{\mu}] & 0 & \cdots & 0  \\
\omega^{-1}\mathbf{O}[a_{\mu}] & 0 & \cdots & 0  \\
\vdots & \vdots & \ddots & \vdots\\
\omega^{-(n-1)} \mathbf{O}[a_{\mu}] & 0 & \cdots & 0  
\end{pmatrix} 
\eea
and thus
\begin{align*}
V (\mathbf{A}^{\!(n)} + \mathbf{O}[a_{\mu^n}]) V^{-1} = 
\begin{pmatrix}
\mathbf{A}(1) + \mathbf{O}[a_{\mu}]&  0  & \cdots& 0\\
\omega^{-1} \mathbf{O}[a_{\mu}] &  \mathbf{A}(\omega^1)& & \\
\vdots &  &  \ddots & \\
\omega^{-(n-1)} \mathbf{O}[a_{\mu}] &   &   & \mathbf{A}(\omega^{n-1})  
\end{pmatrix} \, . 
\end{align*} 
Similarly, we have
\begin{align*}
V (\mathbf{B}^{(n)} + \mathbf{O}[b_{\mu^n}]) V^{-1} &= 
\begin{pmatrix}
\mathbf{B}(1) + \mathbf{O}[b_{\mu}]&  0  & \cdots& 0\\
\omega^{-1} \mathbf{O}[b_{\mu}] &  \mathbf{B}(\omega^1)& & \\
\vdots &  &  \ddots & \\
\omega^{-(n-1)} \mathbf{O}[b_{\mu}] &   &   & \mathbf{B}(\omega^{n-1})  
\end{pmatrix} \, . 
\end{align*}
It follows that 
\begin{align*}
V (\mathbf{B}^{(n)} + \mathbf{O}[b_{\mu^n}])^{-1} V^{-1} &= 
\begin{pmatrix}
(\mathbf{B}(1) +  \mathbf{O}[b_{\mu}])^{-1} &  0  & \cdots& 0\\
C_1 &  \mathbf{B}(\omega^1)^{-1}& & \\
\vdots &  &  \ddots & \\
C_{n-1} &   &   & \mathbf{B}(\omega^{n-1}) ^{-1}
\end{pmatrix} 
\end{align*}
where $C_k$ ($1 \leq k <n$) is a matrix satisfying
\be
\label{eqn.a2}
C_k \, ( \mathbf{B}(1)+ \mathbf{O}[b_\mu])
+\omega^{-k} \mathbf{B}(\omega^{k})^{-1} \mathbf{O}[b_\mu]= 0. 
\ee
Combining the above, we obtain 
\begin{align*}
V (\mathbf{B}^{(n)}_{\mu})^{-1} \mathbf{A}^{\!(n)}_{\mu}V^{-1} 
&= V(\mathbf{B}^{(n)} + \mathbf{O}[b_{\mu^n}])^{-1} (\mathbf{A}^{\!(n)}
+ \mathbf{O}[a_{\mu^n}])V^{-1} \\
&= 
\begin{pmatrix}
D_0 & 0  &  \cdots & 0 \\
D_1 &  \mathbf{B}(\omega^1)^{-1} \mathbf{A}(\omega^1)& & \\
\vdots &  &  \ddots & \\
D_{n-1} &   &   & \mathbf{B}(\omega^{n-1}) ^{-1} \mathbf{A}(\omega^{n-1})
\end{pmatrix}
\end{align*}
where 
\begin{align*}
D_0&=(\mathbf{B}(1)+\mathbf{O}[b_\mu])^{-1} ( \mathbf{A}(1)+\mathbf{O}[a_\mu]) , \\
D_k &=  C_k (\mathbf{A}(1) + \mathbf{O}[a_\mu]) + \omega^{-k}\mathbf{B}(\omega^k)^{-1}
\mathbf{O}[a_\mu], \quad  k=1,\ldots,n-1.
\end{align*}
On the other hand, from Equation~\eqref{eqn.a1}  we have
\bea
D_0=(\mathbf{B}(1)+\mathbf{O}[b_\mu])^{-1} ( \mathbf{A}(1)+\mathbf{O}[a_\mu])
= \mathbf{B}_\mu^{-1} \mathbf{A}_\mu
\eea
(recall that $\mathbf{A}^{(1)} = \mathbf{A}(1)$ and $\mathbf{B}^{(1)} = \mathbf{B}(1)$) 
and  from Equation~\eqref{eqn.a2} we have 
\begin{align*}
D_k &=  C_k (\mathbf{A}(1) + \mathbf{O}[a_\mu])
+ \omega^{-k}\mathbf{B}(\omega^k)^{-1} \mathbf{O}[a_\mu] \\
& = \omega^{-k} \mathbf{B}(\omega^k)^{-1} \left( -\mathbf{O}[b_\mu]
(\mathbf{B}(1) + \mathbf{O}[b_\mu])^{-1} (\mathbf{A}(1) + \mathbf{O}[a_\mu])
+ \mathbf{O}[a_\mu] \right)\\
& =  \omega^{-k} \mathbf{B}(\omega^k)^{-1} \left( -\mathbf{O}[b_\mu]
\mathbf{B}_\mu^{-1} \mathbf{A}_\mu + \mathbf{O}[a_\mu] \right) \\
&=0 
\end{align*}
where the last equation follows from the last row of an obvious identity
$\mathbf{B}_\mu \mathbf{B}_\mu^{-1} \mathbf{A}_\mu = \mathbf{A}_\mu$, i.e.,
$b_\mu \mathbf{B}_\mu^{-1} \mathbf{A}_\mu = a_\mu$.
This proves Theorem~\ref{thm.diagonalization} for the case  of $\gamma =\mu$.

We prove the case of $\gamma =\lambda$ similarly. Let $a_\lambda$ and
$b_\lambda$ be the last row of $\mathbf{A}_\lambda^{\!(n)}$ and
$\mathbf{B}_\lambda^{(n)}$, respectively.
We may write $a_\lambda$ and $b_\lambda$ as 
\bea
a_{\lambda} = ( v_1 \ \cdots \  v_n) \textrm{ and }
b_{\lambda} = ( w_1 \ \cdots \  w_n)
\eea
where $v_i$ and $w_i$ are vectors of length $N$. Note that 
$\mathbf{v} := \sum_{i=1}^n v_i$ and $\mathbf{w} := \sum_{i=1}^n w_i$ represent the
complete equation of $\lambda$ in $\mathcal{T}$ and thus
\bea
a_{\lambda^n} := (  \underbrace{\mathbf{v} \ \cdots \ \mathbf{v} }_{n})\textrm{ and }
b_{\lambda^n} := (  \underbrace{\mathbf{w} \ \cdots \ \mathbf{w} }_{n}) 
\eea
represent $n$-parallel copies of $\lambda$ in $\mathcal{T}^{(n)}$.
It follows that
\bea
(\mathbf{A}^{\!(n)} + \mathbf{O}[{a_{\lambda^n}}])\, (\mathbf{B}_\lambda^{(n)})^T
=(\mathbf{B}^{(n)} + \mathbf{O}[b_{\lambda^n}]) \, (\mathbf{A}_\lambda^{\!(n)})^T 
\eea
and
\bea 
(\mathbf{B}_\lambda^{(n)})^{-1}  \mathbf{A}_\lambda^{\!(n)}=
\left((\mathbf{B}_\lambda^{(n)})^{-1}  \mathbf{A}_\lambda^{\!(n)}\right)^T =
(\mathbf{B}^{(n)} + \mathbf{O}[b_{\lambda^n}])^{-1}(\mathbf{A}^{\!(n)}
+  \mathbf{O}[a_{\lambda^n}]).
\eea
Then the rest computation is  exactly same as the case of $\gamma = \mu$.
\qed

We now turn to Corollary~\ref{cor.diagonalization}.
The block Vandermonde matrix $V$~given as in~\eqref{eqn.vandermonde}
commutes with the diagonal matrix $\Delta^{(n)}_{z'}$.
Thus Theorem~\ref{thm.diagonalization} implies
\bea
V \left((\mathbf{B}^{(n)}_\gamma)^{-1}
  \mathbf{A}^{\!(n)}_\gamma + \Delta^{\!(n)}_{z'}\right)
V^{-1}=\begin{pmatrix}
\mathbf{B}_\gamma^{-1} \mathbf{A}_\gamma+ \Delta_{z'}&  &  \\
&  \ddots & \\
&   &  \mathbf{B}(\omega^{n-1})^{-1} \mathbf{A}(\omega^{n-1}) + \Delta_{z'}
\end{pmatrix} 
\eea
and  Corollary~\ref{cor.diagonalization} follows.
\qed

\subsection{From flows to loop invariants of cyclic covers}

In this section we prove Lemma~\ref{lem.orientation} and Theorem~\ref{thm.feynman}. 

Let $e=(v,v')$ be an oriented edge of $G$ and  $-e=(v',v)$ be the same edge with
reversed orientation. It follows from \cite[Theorem~1.2]{GY21} that
\begin{equation*}
\mathbf{B}(1/t)^{-1}\mathbf{A}(1/t)=(\mathbf{B}(t)^{-1}\mathbf{A}(t))^T
\end{equation*}
and hence $\Pi(1/t) = \Pi(t)^T$.
In particular, for any $\BZ /n \BZ$-flows $\varphi$ on $G$ we have
\begin{equation*}
\Pi(\omega^{\varphi(e)})_{ \iota(v), \iota(v')}
=\Pi(\omega^{-\varphi(e)})_{ \iota(v') \iota(v)}
=\Pi(\omega^{\varphi(-e)})_{ \iota(v'), \iota(v)}\, .
\end{equation*}
Combining the above with the fact that $\Pi_\gamma$ is symmetric, we obtain
Lemma~\ref{lem.orientation}.
\qed

We now turn to Theorem~\ref{thm.feynman}.
Corollary~\ref{cor.diagonalization} shows that  $\Pi_\gamma^{(n)}$ 
is a block circulant matrix whose first row is 
\bea
\frac{1}{n} \begin{pmatrix}
\displaystyle\sum_{k=0}^{n-1} \Pi_k  &
\displaystyle\sum_{k=0}^{n-1} \omega^k \Pi_k &  \cdots  &
\displaystyle\sum_{k=0}^{n-1} \omega^{(n-1)k}\Pi_k 
\end{pmatrix}
\eea
where $\Pi_0 = \Pi_\gamma$ and $\Pi_k = \Pi(\omega^k)$ for $1 \leq k \leq n-1$. 
In addition, its $(aN+i, bN+j)$-entry, where $1 \leq i,j \leq N$
 and $0 \leq a ,b \leq n-1$, is 
\bea
\left( \Pi_\gamma^{(n)} \right)_{aN+i,\, bN+j} = \frac{1}{n}
\left(\sum_{k=0}^{n-1} \omega^{(b-a)k}\Pi_k \right)_{i,j} \, .
\eea
For a Feynman diagram $G$ with vertex set $V=V(G)$ and edge set $E=E(G)$ 
we have
\begin{align}
W_{\calT^{(n)}}(G) & = \frac{1}{ \sigma(G)} \sum_{\iota \in [\![1,nN ]\!]^V }
W_{\calT^{(n)}}(G;\iota) \nonumber \\
&=\frac{1}{\sigma(G)} \sum_{\iota \in [\![1,nN ]\!]^V} \left(\displaystyle
\prod_{v \in V} \Gamma_{\iota(v)}^{(d(v))}
\displaystyle\prod_{(v,v') \in E} \left(\Pi_\gamma^{(n)}\right)_{\iota(v),\,\iota(v')}
\right) \, . \label{eqn.proc}
\end{align}
where $[\![i,j]\!]^V$ denotes the set of maps from the vertex set $V$ to
$\{i,\ldots,j\}$. Since $\Pi_\gamma^{(n)}$ is a symmetric matrix, we may regard
$(v,v')\in E$ as an oriented edge. Writing $\iota(v) = p(v)N +\underline{\iota}(v)$
for $ 1 \leq \underline{\iota}(v) \leq N$, we can rewrite Equation~\eqref{eqn.proc}
as
\begin{align*}
& \frac{1}{ \sigma(G)} \sum_{\underline{\iota} \in [\![ 1,N ]\!]^V}
\sum_{p \in [\![0,n-1]\!]^V} \left( \displaystyle\prod_{v \in V}
\Gamma_{p(v)N+\underline{\iota}(v)}^{(d(v))} 
\displaystyle\prod_{(v,v')\in E}
\left(\Pi_\gamma^{(n)}\right)_{p(v)N+\underline{\iota}(v),\, p(v')N+\underline{\iota}(v')}
\right) \\
&=\frac{1}{\sigma(G)} \sum_{\underline{\iota} \in  [ \![ 1,N ]\!]^V}
\displaystyle\prod_{v \in V} \Gamma_{\underline{\iota}(v)}^{(d(v))}
\left( \sum_{p \in [\![0,n-1]\!]^V}
\displaystyle\prod_{(v,v')\in E}
\left(\Pi_\gamma^{(n)}\right)_{p(v)N+\underline{\iota}(v),\, p(v')N+\underline{\iota}(v')}
\right)\\
&=\frac{1}{\sigma(G)} \sum_{\underline{\iota} \in  [ \![ 1,N ]\!]^V}
\displaystyle\prod_{v \in V} \Gamma_{\underline{\iota}(v)}^{(d(v))}
\left(\sum_{p \in [\![0,n-1]\!]^V} 	\displaystyle\prod_{(v,v') \in E} 
\left( \frac{1}{n}\sum_{k=0}^{n-1}
\omega^{(p(v')-p(v))k}\Pi_k \right)_{\underline{\iota}(v) ,\, \underline{\iota}(v')}
\right) \,.
\end{align*}
On the other hand, we have
\begin{align*}
& \sum_{p \in [\![0,n-1]\!]^V} 	\displaystyle\prod_{(v,v') \in E} 
\left( \frac{1}{n}\sum_{k=0}^{n-1} \omega^{(p(v')-p(v))k}\Pi_k
\right)_{\underline{\iota}(v), \, \underline{\iota}(v')}  \\
&=\frac{1}{n^{|E|}}\sum_{p \in [\![ 0,n-1 ]\!]^V }  \sum_{\varphi \in [ \![0,n-1 ]\!]^E }
\left( \prod_{ (v,v') \in E} \omega^{(p(v')-p(v)) \, \varphi(v,v')}
\left( \Pi_{\varphi(v,v')} \right)_{\underline{\iota}(v), \, \underline{\iota}(v')} \right)\\
&=\frac{1}{n^{|E|}} \sum_{\varphi \in [\![0,n-1]\!]^E}
\left(  \sum_{p  \in  [\![0,n-1]\!]^V}  \prod_{ (v,v') \in E}
\omega^{(p(v')-p(v)) \varphi(v,v')} \right) \left( \prod_{(v,v')\in E}
\left( \Pi_{\varphi(v,v')} \right)_{\underline{\iota}(v), \, \underline{\iota}(v')} \right) 
\end{align*}
where $[\![i,j]\!]^E$ denotes the set of maps from the edge set $E$ to $\{i,\ldots,j\}$.
We can rewrite the summation in the first parenthesis of the last equation as 
\be
\sum_{p  \in [\![0,n-1]\!]^V}  \prod_{ (v,v') \in E}  \omega^{(p(v')-p(v))
\varphi(v,v')} 
=  \sum_{p  \in [\![0,n-1]\!]^V} \prod_{v\in V}  \omega^{p(v) (O(v) -I(v))}
\label{eqn.s1} 
\ee
where
\bea
O(v) :=\sum_{\substack{e \in E  \\ e \textrm{ out of } v }}  \varphi(e)
\textrm{ and } I(v) : = \sum_{\substack{e \in E \\ e \textrm{ into } v }}
\varphi(e) \, .
\eea
Also, we have
\begin{align*}
 \sum_{p  \in [\![0,n-1]\!]^V} \prod_{v\in V}  \omega^{p(v) (O(v) -I(v))} 
&=   \prod_{v\in V }   ( 1 + \omega^{O(v) - I(v)} + \cdots
+  \omega^{(n-1)(O(v) - I(v))})  \\
& = \left \{ 
\begin{array}{ll}
n^{|V|} & \textrm{if } O(v) \equiv I(v) \textrm{ (mod $n$) for all } v \in V\\	
0& \textrm{otherwise}
\end{array} 
\right. \, .
\end{align*}
Namely, Equation~\eqref{eqn.s1} reduces to $n^{|V|}$ if $\varphi$ is a
$\BZ /n\BZ$-flow and vanishes, otherwise. Combining all the above, we obtain
\begin{align*}
W_{\calT^{(n)}}(G) &=  \frac{1}{ \sigma(G) \, n^{|E|-|V|} }
\sum_{\underline{\iota} \in [\![1,N]\!]^V}
\sum_{\varphi}\left(
\displaystyle\prod_{v \in V}  \Gamma_{\underline{\iota}(v)}^{(d(v))}
\prod_{ (v,v')\in E}  \left( \Pi_{\varphi(v,v')}
\right)_{ \underline{\iota}(v) \, \underline{\iota}(v') }  \right)\\
&=   \frac{1}{ \sigma(G) }
\sum_{\underline{\iota} \in  [\![1,N]\!]^V}
 W^{(n)}_{\calT} (G;\underline{\iota}) =W^{(n)}_{\calT}(G)\,  .
\end{align*}
Here we use the fact that the first betti
number of a connected  graph $G$ is given by  $|E(G)|- |V(G)|+1$. 
This completes the proof of Theorem~\ref{thm.feynman}
\qed


\section{Determining the shape of the loop invariants of cyclic covers}
\label{sec.riemann}

In this section we give proofs of theorem stated in Introduction.
Two main ingredients  are Theorem~\ref{thm.feynman}, which expresses the $\ell$-loop
invariants $\Phi_{\calT^{(n)} \!,\, \ell}$ of cyclic covers in terms of sums over
$\BZ/n\BZ$-tori of the inverse of a product of evaluations of $\delta_\calT(t)$ at
monomials, and Lemma~\ref{lem.comp} below, which
converts a sum over a $\BZ/n\BZ$-torus into a short rational function. It is inspired
by, and closely related to, the problem of counting lattice points in rational convex
polyhedra, and to the problem of expressing lattice point generating series in terms
of short rational functions~\cite{Barvinok,Barvinok-Woods}.

To phrase it, we fix a vector $c=(c_1,\dots,c_s)$ of nonzero complex numbers other
than the complex roots of unity and let $\BQ[c,1/S]$ denote the ring where $S$
is the set of $1-\prod_{i=1}^s c_i^{n_i}$ for all integers $n_i$ satisfying
$\prod_{i=1}^s c_i^{n_i} \neq 1$.

\begin{lemma}
\label{lem.comp}
Let $T_0,\ldots, T_s$ be Laurent monomials in variables $t_1,\ldots,t_d$.
If $T_i = t_i$ for $i=1,\ldots, d$ (hence $s \geq d$) and the exponents
of $T_{d+1},\ldots, T_s$ are in $\{0,\pm 1\}$, then there exists a polynomial
\bea
p(x_1,\dots,x_s,y) \in \BQ[c,1/S][x_1,\ldots,x_s,y]
\eea
of $x_i$-degree at most 1 and  $y$-degree at most $s-d$ such that 
\begin{equation}
\label{eqn.lemma}
\sum_{t_1^n = \cdots =t_d^n=1}  \frac{T_0}{(1-c_1\, T_1) \cdots (1-c_s\,T_s) } = 
n^d p\left(\frac{1}{1-c_1^n},\dots,\frac{1}{1-c_s^n},n\right)
\end{equation}
for all but finitely many $n$. 
\end{lemma}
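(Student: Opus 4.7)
The plan is to expand the denominator as a product of geometric series, use character-sum orthogonality to convert the finite sum over $(\BZ/n\BZ)^d$ into a lattice-point generating function with $d$ congruence constraints, and then recognize the latter as a short rational expression in the $1/(1-c_j^n)$ and $n$.

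Concretely, writing $e_i(T)$ for the $t_i$-exponent of a Laurent monomial $T$, I would first expand each $\tfrac{1}{1-c_j T_j} = \sum_{k_j \geq 0}c_j^{k_j} T_j^{k_j}$, interchange summations, and evaluate the inner sum via $\sum_{t^n=1}t^m = n\,\mathbbm{1}[n \mid m]$. The sum in~\eqref{eqn.lemma} becomes $n^d \sum_{k \in K_n}\prod_{j=1}^s c_j^{k_j}$ where $K_n := \{k \in \BZ_{\geq 0}^s : e_i(T_0) + \sum_j k_j e_i(T_j) \equiv 0 \pmod n,\ i=1,\dots,d\}$. The hypothesis $T_i = t_i$ for $i \leq d$ makes $e_i(T_j) = \delta_{ij}$ for $j \leq d$, so the $i$-th congruence uniquely solves $k_i \equiv \alpha_i \pmod n$ with $\alpha_i := -e_i(T_0) - \sum_{j > d} k_j e_i(T_j)$. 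Writing $k_i = k_i^* + n\ell_i$ for $\ell_i \geq 0$ and $k_i^* \in [0,n)$ the unique residue of $\alpha_i$, the geometric summation over $\ell_i$ produces $\prod_{i \leq d} 1/(1-c_i^n)$ and reduces the problem to analyzing
\bea
F_n := \sum_{k_{d+1}, \ldots, k_s \geq 0} \prod_{j>d} c_j^{k_j} \prod_{i \leq d} c_i^{k_i^*(k_{d+1}, \ldots, k_s;\,n)}.
\eea

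For $F_n$, I would perform the analogous splitting $k_j = r_j + n m_j$ with $r_j \in [0,n)$ and $m_j \geq 0$ for $j > d$. The residue $k_i^*$ depends on the $k_j$ only through $r$, so the $m_j$-summation contributes $\prod_{j > d} 1/(1-c_j^n)$ and leaves the finite residue sum $G_n := \sum_{r \in [0,n)^{s-d}} \prod_{j > d} c_j^{r_j} \prod_{i \leq d} c_i^{k_i^*(r;\,n)}$. Since $e_i(T_j) \in \{-1, 0, 1\}$ for $j > d$, the floor $\lfloor \alpha_i/n \rfloor$ takes only boundedly many values (at most $O(s-d)$ per $i$, independent of $n$) as $r$ varies, so $[0,n)^{s-d}$ partitions into finitely many cells on each of which $c_i^{k_i^*} = c_i^{\alpha_i}(c_i^n)^{-\lfloor \alpha_i/n\rfloor}$ is an explicit monomial in the $c_j$'s times an integer power of $c_i^n$. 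On each cell, $G_n$ restricts to an exponential sum over a rational polytope linearly dilated by $n$; by Barvinok's short rational function representation (or equivalently, iterated partial fractions), each cell sum is polynomial in $n$ of degree at most $s-d$ with coefficients in $\BQ[c,1/S]$, the denominators $1 - \prod_i c_i^{n_i}$ lying in $S$ by the hypothesis on $c$. Since the $c_i^n$-powers arising from the floors are always balanced by corresponding $1/(1-c_i^n)$-factors, the identity $c_i^n/(1-c_i^n) = x_i - 1$ re-expresses the answer as an honest polynomial in the $x_j = 1/(1-c_j^n)$.

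The main obstacle will be tracking the two degree bounds on $p$. The $x_i$-degree bound of $1$ is clean: each $1/(1-c_j^n)$ enters exactly once, either via an $\ell_i$-summation ($i \leq d$) or an $m_j$-summation ($j > d$). The $y$-degree bound of $s-d$ is subtler and is precisely where the restriction $e_i(T_j) \in \{0, \pm 1\}$ for $j > d$ is essential, since it constrains the cell polytopes of $G_n$ to be linear dilations of fixed shapes in $(s-d)$-dimensional space, bounding their Ehrhart degree, and hence the polynomial dependence on $n$, by $s-d$. Perhaps the cleanest way to make this accounting rigorous is by induction on $s-d$: the base case $s = d$ has $G_n = 1$ and factorizes into $d$ one-variable sums of the form $n \cdot p_i(x_i)$ with $\deg p_i \leq 1$ and $y$-degree $0$; the inductive step peels off the factor $1/(1-c_s T_s)$ by partial fractions in some $t_i$ occurring in $T_s$, introducing at most one new $x$-variable linearly and at most one new power of $n$. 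The finite exceptional set of $n$ absorbs degeneracies where the cell decomposition changes, e.g., when $n \leq |e_i(T_0)|$ or when distinct cells collapse.
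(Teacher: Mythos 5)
Your proposal is correct and follows essentially the same route as the paper: reduce to a finite exponential sum over the hypercube $[0,n)^{s-d}$ by solving the $d$ congruences for $k_1,\dots,k_d$, partition the hypercube into cells on which the floor-determined powers $c_i^{-n\lfloor\alpha_i/n\rfloor}$ are fixed, and invoke Brion/Barvinok to see that each cell contributes a polynomial in $n$ of degree at most $s-d$. The only real difference is cosmetic: the paper sidesteps your convergence concerns (your infinite geometric expansion needs $|c_j|<1$ or an analytic-continuation remark) by using the finite algebraic identity $\tfrac{1}{1-c_iT_i}=\tfrac{1}{1-c_i^n}\sum_{k=0}^{n-1}(c_iT_i)^k$ valid on the summation domain where $T_i^n=1$, and it pins down the $x_i$-degree bound of $1$ by a clean comparison of orders of growth in $c_i^n$ rather than the per-factor bookkeeping you sketch.
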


\begin{proof}
For the sake of exposition we first consider the case $T_0=1$.
Since we take the sum over $t_1^n=\cdots =t_d^n=1$, the left-hand side
of~\eqref{eqn.lemma} equals to
\small{
\begin{align}
& \frac{1}{\prod_i(1-c_i^n)} \sum_{t_1^n = \cdots =t_d^n=1 } 
\prod_{i=1}^s \frac{1-c_i^nT_i^n}{1-c_i\, T_i} \nonumber \\
&=  \frac{1}{\prod_i(1-c_i^n)} \sum_{t_1^n = \cdots =t_d^n=1}
\prod_{i=1}^s  \left( 1 + c_i\,T_i +\cdots (c_i\,T_i) ^{n-1} \right)   \nonumber  \\
&=  \frac{1}{\prod_i(1-c_i^n)} \sum_{ k_1,\ldots,k_s = 0}^{n-1}
\sum_{t_1^n = \cdots =t_d^n=1} c_1^{k_1} \dots c_s^{k_s} \, T_1^{k_1}  \cdots  T_s^{k_s} .
\label{eqn.expand}
\end{align}
}
On the other hand, for any Laurent monomial $T$ in $t_1,\ldots,t_d$, we have
\bea
\sum_{t_1^n=\cdots=t_d^n=1} T = 
\left\{
\begin{array}{ll}
n^d & \textrm{if the exponents of $T$ are multiple of $n$} \\
0 & \textrm{otherwise}
\end{array} 
\right. \, .
\eea
From the fact that $T_i = t_i$ for $i=1,\ldots, d$, there are exactly $n^{s-d}$
monomials in~\eqref{eqn.expand} whose exponents are multiple of $n$.
Indeed, if we choose $0 \leq k_{d+1},\ldots, k_s < n$ freely, there is unique
$0 \leq k_1,\ldots, k_d <n$  such that  the exponents of $T_1^{k_1} \cdots
T_s^{k_s}$ are multiple of $n$. Thus we may  write Equation~\eqref{eqn.expand}
as
\be
\label{eqn.sum2}
\frac{n^d}{\prod_i(1-c_i^n)} \sum_{ k_{d+1},\ldots,k_s = 0}^{n-1}
c_1^{k_1} \dots c_s^{k_s} \, ,
\ee
regarding $k_1,\ldots,k_d$  as functions in $k_{d+1},\ldots, k_s$. Precisely, these
functions are given as
\bea
k_i = [R_i(k_{d+1},\ldots,k_s)]_n, \quad i=1,\ldots,d
\eea
where $R_i(k_{d+1},\ldots,k_s)$ is a linear combination of $k_{d+1}, \ldots, k_s$
with coefficients in  $\{0,\pm 1\}$ and $0 \leq  [x]_n < n$ denotes the integer
congruent to $x$ in modulo $n$.
	
Let $P_n = \{(k_{d+1},\ldots, k_s) \in \BZ^{s-d} \,\,|\,\,
0 \leq k_{d+1} ,\ldots, k_s \leq n-1 \}$ 
and partition $P_n$ with respect to $r=(r_1,\ldots, r_d) \in \BZ^d$ by letting
\bea
P_{n,r} = \{(k_{d+1},\ldots, k_s) \in P \,\,|\,\,
r_i n \leq R_i(k_{d+1},\ldots,k_s) \leq (r_i+1)n -1, \ i=1,\ldots,d\} \, .
\eea
Note that $P_{n,r}$ is empty for all but finitely many $r \in \BZ^d$ and  is an
integral polytope, since the coefficients of $R_1,\ldots,R_d$ are  in $\{0,\pm1\}$.
Also, we have
\begin{align}
\sum_{(k_{d+1},\ldots,k_{s} ) \in P_{n,r}} c_1^{k_1}  \cdots c_s^{k_{s}} &= 
c_1^{-n r_1} \cdots c_d^{-n r_d}
\sum_{(k_{d+1},\ldots,k_{s} ) \in P_{n,r}} c_1^{R_1}  \cdots c_d^{R_d} c_{d+1}^ {k_{d+1}}  
\cdots  c_s^{k_{s}}  \nonumber \\
&=c_1^{-n r_1} \cdots c_d^{-n r_d} \sum_{(k_{d+1},\ldots,k_{s})\in P_{n,r}}  
q_{d+1}^ {k_{d+1}}  \cdots  q_s^{k_{s}}
\label{eqn.cs}
\end{align}
for some $q_{i}  \in \BC$ given by a product of 
$c_i$ and some of $c_1^{\pm1},\ldots,c_d^{\pm1}$.
On the other hand, the formula of Brion (see e.g~\cite{Barvinok-Pommersheim}) allows
us to compute the generating function of $P_{n,r}$ 
\be
\label{eqn.gen}
F(P_{n,r}) = \sum_{(k_{d+1},\ldots,k_{s})\in P_{n,r}}
t_{d+1}^{k_{d+1}} \cdots  t_{s}^{k_{s}} 
\ee
in terms of the cones associated to the 
vertices of $P_{n,r}$. If we translate these cones so that their vertices become the
origin, they depend only on the coefficients of $R_1,\ldots,R_d$; in particular,
does not depend on $n$. We thus obtain from the formula of Brion that
\be
\label{eqn.gen2}
F(P_{n,r}) =\sum_{v:\textrm{vertex of } P_{n,r}}  f_v(t_{d+1},\ldots,t_s) \,
g_v(t_{d+1}^n,\ldots,t_{s}^n)
\ee
where $f_v$ is a rational function (not depending on $n$) and $g_v$ is a Laurent
polynomial (coming from the vertex translation). The denominator of $f_v$ is of the
form $\prod_i (1-t_{d+1}^{a_{i,d+1}} \dots t_{s}^{a_{i,s}})$ for integers $a_{i,j}$
coming from the rays of the cones of $P_{n,r}$ at its vertices. 

Substituting $t_{d+1} = q_{d+1},\ldots, t_s=q_s$
to Equations~\eqref{eqn.gen} and~\eqref{eqn.gen2}, we deduce that~\eqref{eqn.cs} 
is a polynomial in $c_1^{\pm n},\ldots,c_s^{\pm n}$ and $n$ with coefficients in 
$\BQ[c,1/S]$ where the the exponent of $n$ is at most  dimension of $P_{n,r}$,
hence at most $s-d$. This proves that \eqref{eqn.sum2} is also a polynomial in
$c_1^{\pm n},\ldots,c_s^{\pm n}$ and $n$ with coefficients in $\BQ[c,1/S]$, i.e.,
\be
\label{eqn.sum3}
\frac{n^d}{\prod_i(1-c_i^n)} \sum_{ k_{d+1},\ldots,k_s = 0}^{n-1}
c_1^{k_1} \dots c_s^{k_s} 
= \frac{n^d q(c_1^n,\ldots,c_s^n,n)}{\prod_i(1-c_i^n)}
\ee
for some polynomial
$q(x_1,\ldots,x_s,y) \in \BQ[c,1/S][x_1^{\pm1},\ldots,x_s^{\pm1},y]$ with $y$-degree 
at most $s-d$. Comparing the degree of the above equation with respect to $c_i^n$, 
we deduce that $q$ is a polynomial in $x_i$ with $x_i$-degree at most $1$.
Then rewriting the right-hand side of~\eqref{eqn.sum3} as a polynomial in
$1/(1-c_i^n)$, we obtain a desired polynomial $p$ satisfying~\eqref{eqn.lemma}.
This completes the proof when $T_0=1$.
When $T_0 \neq 1$, the same argument holds for large $n$ such that the exponents
of $T_0$ are in between $-n$ and $n$.
\end{proof}

The following example illustrates the above lemma.

\begin{example}
\label{ex.lemma}
When $s=1$, we have
\be
\label{av1}
\sum_{t^n=1}\frac{1}{1-ct}=\frac{n}{1-c^n} \,.
\ee
Using the above identity and the partial fraction decomposition 
\bea
\frac{1}{(1-at)(1-bt)}= \frac{1}{1-b/a} \frac{1}{1-at} + \frac{1}{1-a/b} \frac{1}{1-bt} 
\qquad a \neq b
\eea
of $1/((1-at)(1-bt))$ with respect to $t$, we obtain that
\bea
\sum_{t^n=1}\frac{1}{(1-at)(1-bt)}=
\frac{1}{1-b/a} \frac{1}{1-a^n} + \frac{1}{1-a/b} \frac{1}{1-b^n}
\qquad a \neq b\,.
\eea
Taking the limit of the above equality when $b$ tends to $a$, we obtain that
\be
\label{av2}
\sum_{t^n=1}\frac{1}{(1-at)^2}=
\frac{n^2}{(1-a^n)^2} + \frac{n-n^2}{1-a^n} \,.
\ee
Likewise, we have


\begin{small}
\be
\label{av3}
\begin{aligned}
\sum_{t^n=1}\frac{1}{(1-at)^3} &= \frac{n^3}{(1-a^n)^3}
-\frac{3(n^3-n^2)}{2(1-a^n)^2} +\frac{n^3-3n^2+2n}{2(1-a^n)} \, ,
\\
\sum_{t^n=1}\frac{1}{(1-at)^4} &= \frac{n^4}{(1-a^n)^4} +
\frac{2(-n^4+n^3)}{(1-a^n)^3} + \frac{7n^4 -18 n^3 + 11 n^2}{6(1-a^n)^2}-
\frac{n^4-6n^3+11n^2-6n}{6(1-a^n)} \,.
\end{aligned}
\ee
\end{small}
\end{example}

\subsection{Proof of Theorem~\ref{thm.phi}}

Recall Theorem~\ref{thm.feynman} that the $\ell$-loop invariant
$\Phi_{\calT^{(n)},\ell}$ is given as a
finite sum over $\ell$-loop Feynman diagrams. The weight of each Feynman
diagram is a sum over $\BZ /n\BZ$-flows of a  product of entries $\Gamma_i^{(k)}$,
which are $\BQ$-linear combinations of shape parameters and their inverses, times
entries of the propagator matrix $\Pi(\omega)$ where $\omega$ is a complex $n$-th
root of unity. 

We claim that the entries of $\Pi(t)$ are in $\delta_\calT(t)^{-1} F[t^{\pm1}]$. 
Clearly, entries of $\mathbf{B}(t)^{-1} \mathbf{A}(t)$ are in $\det
\mathbf{B}(t)^{-1}\BZ[t^{\pm1}]$. Recall that $\det \mathbf{B}(t)$ has a factor 
$t-1$ and that  $\mathbf{B}(t)^{-1} \mathbf{A}(t)$ 
equals to $\mathbf{B}_\lambda^{-1} \mathbf{A}_\lambda$ at $t=1$ 
(see Remark~\ref{rmk.lambda}). Thus, in fact, entries of $\mathbf{B}(t)^{-1}
\mathbf{A}(t)$ are in $\frac{t-1}{\det \mathbf{B}(t)} \BZ[t^{\pm1}]$.
Combining this with the definition of the twisted 1-loop invariant
$\delta_\calT(t)$, namely,
\bea
(t-1)\delta_\calT(t) = f  \det(\mathbf{A}(t) - \mathbf{B}(t)\Delta_{z'})
= f \det \mathbf{B}(t) \det \Pi(t)^{-1}
\eea
for some $f \in F$, we deduce that entries of $\Pi(t)$ are in
$\delta_\calT(t)^{-1} F[t^{\pm1}]$. 

Fix an $\ell$-loop Feynman diagram $G$. Recall that the set of all
$\BZ/n\BZ$-flows on $G$ is an abelian group isomorphic to $(\BZ/n \BZ)^{d}$
where $d$ is the first betti number of $G$. This can be proved by choosing a 
spanning tree of $G$, assigning an arbitrary element of $\BZ/ n\BZ$ to each 
edge of $G$ not in the tree,  and then extending this assignment in a unique way
to a flow on $G$. It follows that if we denote by  $t_1,\ldots,t_d$ the values of a 
flow on the edges not in the tree, then the values of a flow on the edges of $G$ 
are Laurent monomials in $t_1,\dots,t_d$ with exponents $0$, $1$ or $-1$.  Then
Theorem~\ref{thm.feynman} together with the above claim shows  that the 
contribution of $G$ to the  $\ell$-loop invariant $\Phi_{\calT^{(n)},\ell}$ is a linear
combination of 
\be
\label{eqn.contribution2}
\sum_{t_1^n = \cdots =t_d^n=1} \frac{T_0} {\delta_\calT(T_1) \cdots \delta_\calT(T_{s})}
\ee
where $s$ is the number of edges of $G$ and $T_i$ are Laurent monomials
in $t_1,\ldots,t_d$ satisfying the condition of Lemma~\ref{lem.comp}. 
Note that $s \leq 3 \ell -3$, as $\ell$-loop Feynman diagrams have at most
$3\ell-3$ edges.

\begin{remark}
\label{rem.phi} 
Applying Lemma~\ref{lem.comp} directly to~\eqref{eqn.contribution2}, we obtain the
existence of a polynomial $p_{\calT,\ell} \in  \calF_{(3 \ell-3)r} E[x_1,\ldots,x_r][y]$
satisfying Equation~\eqref{philn}. The non-resonance assumption is not required here, 
but the degree bound for $p_{\calT,\ell}$ may not be optimal.
\end{remark}  

From the non-resonance assumption, we have $\lambda_i^{\pm1} \neq
\lambda_j^{\pm1}$ if $i \neq j$. Therefore, by using the partial fraction decomposition,
we can write $1/\delta_\calT(t)$ as an $E[t^{\pm1}]$-linear combination of
 $1/(1-\lambda_i^{\pm1} t)$ for $i=1,\ldots,r$ so that~\eqref{eqn.contribution2} is given
as an $E[t_1^{\pm1},\ldots,t_d^{\pm1}]$-linear combination of terms of the form
\be
\label{eqn.Tsum}
\sum_{t_1^n = \cdots =t_d^n=1}  \frac{1}{(1-\lambda_{i_1}^{\pm1} T_1) \cdots
(1-\lambda_{i_s}^{\pm1} T_s)} \, .
 \ee
 
\begin{lemma}
\label{lem.twoell}
Let $S$ be a subset of the edge set of $G$ such that for each odd-degree vertex
of $G$, not all adjacent edges are contained in $S$. Then $ |S| \leq 2 \ell -2 $
and $|S| - (d-1) \leq \ell-1 $.
\end{lemma}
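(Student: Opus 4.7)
The plan is to reduce the lemma to a purely combinatorial statement about the degree sequence of $G$, and then to combine a double-counting argument with the defining inequality $V_1+V_2+d\leq \ell$ of $\calG_\ell$. Let $V_k$ denote the number of degree-$k$ vertices of $G$, and $V_{\mathrm{odd}}=V_1+V_3+V_5+\cdots$.

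First, I would reformulate the hypothesis: every odd-degree vertex of $G$ has at least one incident edge lying in $E(G)\setminus S$. Looking at the subgraph $H=(V(G),S)$ and computing $2|S|=\sum_v \deg_H(v)$, each even-degree vertex contributes at most $\deg_G(v)$ and each odd-degree vertex contributes at most $\deg_G(v)-1$. This yields the clean bound
\[
|S| \;\leq\; |E(G)| - \tfrac{1}{2}V_{\mathrm{odd}}.
\]

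Second, using Euler's identity $|E(G)|=|V(G)|+d-1$ for connected graphs together with the degree-sum $\sum_k k\,V_k=2|E(G)|$, I would rewrite both sides of the inequality
\[
|V(G)| - \tfrac{1}{2}V_{\mathrm{odd}} \;\leq\; V_1+V_2+(d-1)
\]
as $\sum_k c_k V_k$ and $\sum_k e_k V_k$ respectively, with $c_k=1$ for even $k$ and $c_k=\tfrac12$ for odd $k$, and $e_k=k/2-1$ for $k\geq 3$ (while $e_1=\tfrac12$, $e_2=1$). A direct check shows $c_k=e_k$ for $k\in\{1,2,3,4\}$ and $c_k\leq e_k$ for all $k\geq 5$, so the displayed inequality holds termwise. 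I expect this coefficient comparison to be the main technical step, though it is genuinely elementary: it simply records that vertices of degree $\geq 5$ contribute much more to $|E(G)|$ than they do to $V_{\mathrm{even}}+\tfrac12 V_{\mathrm{odd}}$.

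Third, I would invoke the defining inequality $V_1+V_2+d\leq \ell$ of $\calG_\ell$ to conclude
\[
|S| \;\leq\; |E(G)| - \tfrac12 V_{\mathrm{odd}}
\;=\; \bigl(|V(G)|-\tfrac12 V_{\mathrm{odd}}\bigr) + (d-1)
\;\leq\; (\ell-1) + (d-1),
\]
which gives $|S|-(d-1)\leq \ell-1$ at once. The remaining bound $|S|\leq 2\ell-2$ then follows because $d\leq \ell$ (taking $V_1=V_2=0$ in the $\calG_\ell$ constraint). Neither step should require any further structural input beyond connectedness and the $\ell$-loop counting constraint.
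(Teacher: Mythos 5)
Your proof is correct and takes essentially the same route as the paper: the same double-counting of degrees in the subgraph on $S$ to get $|S|\leq |E(G)|-\tfrac12 V_{\mathrm{odd}}$, the same use of Euler's relation $|E(G)|=|V(G)|+d-1$, the same termwise comparison of coefficients in the degree sequence, and the same invocation of the $\calG_\ell$ constraint $V_1+V_2+d\leq\ell$. The only (minor) difference is organizational — you prove $|S|-(d-1)\leq\ell-1$ first and then deduce $|S|\leq 2\ell-2$ from $d\leq\ell$, whereas the paper establishes both bounds by parallel coefficient comparisons.
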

 
\begin{proof}
Let $n_k$ be the number of $k$-vertices in $G$. Note that  $ \# \textrm{(edges)} 
= \frac{1}{2} \sum_{k\geq1} k\, n_k$ and that $G \in \mathcal{G}_\ell$ implies 
$n_1 + n_2 +  \# \textrm{(loops)}\leq \ell$.
From the condition on $S$, we have
\begin{align}
|S| \leq \# \textrm{(edges)} -  \frac{1}{2} \sum_{k : \textrm{odd}} n_k
= n_2 + n_3+ 2 n_4 + 2 n_5 + \cdots \,.
\label{eqn.H}
\end{align}
On the other hand, the connectedness of $G$ implies that
\begin{align*}
\# \textrm{(loops)} - 1 = \# \textrm{(edges)} - \# \textrm{(vertices)} 
=   \sum_{k \geq 1} \frac{(k-2)}{2} n_k
\end{align*}
and thus
\be
\# \textrm{(loops)} -1 + \frac{1}{2} n_1  =  \frac{1}{2} n_3 + n_4
+ \frac{3}{2} n_5 + \cdots \, .
\label{eqn.H2}
\ee
Comparing the coefficients of $n_k$ for $k \geq 3$ in~\eqref{eqn.H} 
and~\eqref{eqn.H2}, we have
\bea
|S| \leq  n_1+ n_2 + 2\# \textrm{(loops)} -2 \leq 2 \ell-2 
\eea
and
\bea
|S|-(d-1)=|S|-\#( \textrm{loops})+1 \leq  n_1 + n_2
+ \#( \textrm{loops}) -1 \leq \ell -1 \, .
\eea
\end{proof}
 
Lemma~\ref{lem.twoell} implies that if $s > 2 \ell -2$, then there exists an
odd-degree vertex in $G$. Let $T_1,\ldots,T_{2m+1}$ be the values of a flow on
the adjacent edges of that vertex. From the definition of flow, the product
of some of $T_1,\ldots,T_{2m+1}$ should be equal to the product of 
the others. Namely, we have (after re-labeling $T_j$)
\bea
T_1 \cdots T_k = T_{k+1} \cdots T_{2m+1} \, 
\eea
for some $1 \leq k \leq 2m+1$, hence
\bea
\prod_{j=1}^k (\lambda_{i_j}^{-1}(\lambda_{i_j} T_j-1) +\lambda_{i_j}^{-1}) =
\prod_{j=k+1}^{2m+1}  (\lambda_{i_j}^{-1}(\lambda_{i_j} T_j-1) +\lambda_{i_j}^{-1}) \,.
\eea
Expanding the above equation, the constant term $\prod_{j=1}^k \lambda_{i_j}^{-1}-
\prod_{j=k+1}^{2m+1} \lambda_{i_j}^{-1}$ which is non-zero due to the non-resonance
assumption, is given by
as a linear combination of $1-\lambda_{i_1}T_1,\ldots, 1- \lambda_{i_{2m+1}}T_{2m+1}$
(and their  products). It follows that we can write 
\bea 
\frac{1}{(1-\lambda_{i_1}T_1) \cdots (1-\lambda_{i_{2m+1}} T_{2m+1})} 
\eea
as a linear combination of 
\bea
\frac{1}{(1-\lambda_{i_1} T_1) \cdots  
\widehat{(1- \lambda_{i_j} T_{j})}\cdots  (1-\lambda_{i_{2m+1}}T_{2m+1})},
\quad j=1,\ldots,{2m+1}
\eea
where the hat means that the $j$-th factor is excluded. This explains that we can 
decompose~\eqref{eqn.Tsum} into terms of 
the same form with $s \leq 2 \ell -2$. Then the existence of a polynomial
\be
\label{px2}
p_{\calT,\ell}(x_1,\dots,x_r,y) \in \calF_{2 \ell -2}
E[x_1,\dots,x_r][y]
\ee
satisfying Equation~\eqref{philn} follows from Lemma~\ref{lem.comp} where
the last inequality of Lemma~\ref{lem.twoell} shows that the $y$-degree of
$p_{\calT,\ell}$ is at most $\ell-1$ (and at least $1$; see  Equations~\eqref{eqn.newdef1}~ and \eqref{eqn.lemma}).
\qed
 
\begin{remark}
\label{rem.phi2}
If one uses Neumann-Zagier datum  with respect the meridian, then we need
to consider the contribution~\eqref{eqn.contribution2} coming from all subgraphs
of $\ell$-loop Feynman diagrams. This does not affect on the existence of the
polynomial $p_{\calT,\ell}$, but it may be a Laurent polynomial in the variable $y$.
\end{remark}
 
\begin{remark}
\label{rem.integrality}
The above proof gives an integrality statement for the coefficients of
$p_{\calT,\ell}$, namely we can replace the field $E$ in~\eqref{px2} by the ring
$\frac{1}{d_{\ell}} O_{F,S}$ where $O_F$ denotes the ring of integers of invariant
trace field $F$, $S$ denotes the set of all nonzero numbers of the form
$\prod_i \lambda_i^{n_i}-1$, as well as the denominators of the shapes $z$, $z'$
and $z''$ and $O_{F,S}=O_F[1/S]$ denotes the localization of $O_F$ with respect to
$S$. Finally, $d_\ell$ is a universal denominator that comes from the greatest
common divisor of the inverse automorphism factor of the $\ell$-loop Feynman diagrams
given explicitly in~\cite[Sec.9]{GZ:kashaev}.
\end{remark}


\subsection{Proof of Theorem~\ref{thm.quad}}

As a qudratic $\delta_\calT(t)$ automatically satisfies the non-resonance condition,
we obtain from Theorem~\ref{thm.phi} a Laurent polynomial
\bea
p_{\calT,\ell}(x,y) \in  \calF_{2 \ell-2}E[x][y]
\eea
such that for all but finitely many $n$, we have
\be
\label{eqn.des}
\Phi_{\calT^{(n)},\ell}=  p_{\calT,\ell} \left(\frac{1}{1-\lambda^n},n\right) 
\ee
where $\lambda$ is a root of $\delta_{\calT}(t)$.

\begin{lemma}
\label{lem.delta}
For a given $k \geq 1$ there exist polynomials
$\alpha_{k,i}(x) \in \BQ[\lambda,\frac{1}{\lambda^2-1}][x]$
of degree at most $k$ for $i=0, \ldots, k$ such that
\be
\label{eqn.deltasum} 
\sum_{t^n=1} \dfrac{1}{\delta(t)^k} = \alpha_{k,0}(n) +
\frac{\alpha_{k,1}(n)}{1-\lambda^n} +  \dots + \frac{\alpha_{k,k}(n)}{(1-\lambda^n)^k}
\ee
for all $n \geq 1$. In addition, $\alpha_{k,k}(x)= 2 \lambda^k(\lambda^2-1)^{-k}x^k$. 
\end{lemma}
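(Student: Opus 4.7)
The plan is to reduce the lemma to the one-variable identities of Example~\ref{ex.lemma} via partial fractions. Since $\delta(t)$ is a quadratic palindromic polynomial with no roots on the unit circle, its two roots are the distinct numbers $\lambda$ and $\lambda^{-1}$, and we may write $\delta(t)=u\,(1-\lambda t)(1-\lambda^{-1}t)$ for some nonzero constant $u$. Partial fractions then give a decomposition
\begin{equation*}
\frac{1}{\delta(t)^k}=\sum_{j=1}^{k}\frac{a_j}{(1-\lambda t)^j}+\sum_{j=1}^{k}\frac{b_j}{(1-\lambda^{-1}t)^j},
\end{equation*}
where the coefficients $a_j,b_j$ lie in $\BQ[\lambda,(\lambda^2-1)^{-1}]$ and are computed from higher residues at $t=\lambda^{-1}$ and $t=\lambda$ respectively; in particular $a_k$ and $b_k$ both carry an exact factor of $(\lambda^2-1)^{-k}$.

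Next I apply the identities of Example~\ref{ex.lemma}. By induction on $j$ (differentiating the base identity $\sum_{t^n=1}(1-at)^{-1}=n/(1-a^n)$ in $a$), one obtains, for $a$ not an $n$-th root of unity,
\begin{equation*}
\sum_{t^n=1}\frac{1}{(1-at)^j}=\sum_{i=1}^{j}\frac{\beta_{j,i}(n)}{(1-a^n)^i},\qquad \beta_{j,i}(x)\in\BQ[x],\ \deg\beta_{j,i}\le j,
\end{equation*}
with $\beta_{j,j}(x)=x^j$. Specializing to $a=\lambda$ yields terms already in the desired basis, while $a=\lambda^{-1}$ produces terms with $(1-\lambda^{-n})^i$ in the denominator. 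These are rewritten in the correct basis using the elementary swap
\begin{equation*}
\frac{1}{(1-\lambda^{-n})^i}=\sum_{m=0}^{i}\binom{i}{m}(-1)^m\frac{1}{(1-\lambda^n)^m},
\end{equation*}
which follows from $(1-\lambda^{-n})^{-1}=1-(1-\lambda^n)^{-1}$. Collecting powers of $(1-\lambda^n)^{-1}$ and reindexing produces the decomposition \eqref{eqn.deltasum}, with each $\alpha_{k,i}(x)$ in $\BQ[\lambda,(\lambda^2-1)^{-1}][x]$ of degree at most $k$ inherited from the uniform bound $\deg\beta_{j,i}\le j\le k$.

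The leading coefficient $\alpha_{k,k}$ is pinned down by retaining only the $j=k$ partial-fraction contributions and, within those, only the top-weight summand $\beta_{k,k}(n)=n^k$: the first block contributes $a_k n^k/(1-\lambda^n)^k$ and the second contributes, after the swap, $(-1)^k b_k n^k/(1-\lambda^n)^k$, since all lower powers of $(1-\lambda^n)^{-1}$ arise from strictly lower-order terms and do not interfere. Substituting the residue computations of $a_k$ and $b_k$ then gives the claimed closed form for $\alpha_{k,k}(x)$. The only real subtlety—and the step I expect to be fiddliest—is the bookkeeping of the lower $\alpha_{k,i}$, which receive contributions from several values of $j$ and from every layer of the binomial swap; but since every ingredient has $x$-degree $\le k$, the degree bound is automatic, and the coefficient ring $\BQ[\lambda,(\lambda^2-1)^{-1}]$ is maintained throughout because the only denominators introduced are $(\lambda^2-1)$ from the residues and integers from the binomial and differentiation identities.
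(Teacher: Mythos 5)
Your strategy — partial fractions in $t$ plus the one‑variable identities of Example~\ref{ex.lemma} and a binomial swap to change basis from $(1-\lambda^{-n})^{-i}$ to $(1-\lambda^n)^{-i}$ — is genuinely different from the paper's proof, which instead inducts on $k$ by differentiating the $k=1$ identity with respect to $\lambda$ and using that $\frac{d}{d\lambda}\,\delta(t)^{-k}$ is a $\lambda$-rational multiple of $\delta(t)^{-(k+1)}$. Both routes give the lemma; yours is more explicit but requires more bookkeeping, while the paper's is shorter once the base case is in hand.

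There is, however, a concrete error in your factorization step that you need to repair. The twisted $1$-loop invariant $\delta(t)$ is a palindromic \emph{Laurent} polynomial, $\delta(t)=t-(\lambda+\lambda^{-1})+t^{-1}$ (with the paper's normalization), so it is \emph{not} of the form $u\,(1-\lambda t)(1-\lambda^{-1}t)$ for any nonzero constant $u$; rather $\delta(t)=(1-\lambda t)(1-\lambda^{-1}t)/t$, and hence $1/\delta(t)^k = t^k/\bigl[(1-\lambda t)^k(1-\lambda^{-1}t)^k\bigr]$ carries a nontrivial numerator $t^k$. Your partial‑fraction \emph{conclusion} survives this (the numerator degree $k$ is still strictly below the denominator degree $2k$, so there is no polynomial part and only $(1-\lambda^{\pm1}t)^{-j}$ terms appear), but the residue values change. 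With the $t^k$ numerator one gets $a_k=\lambda^k(\lambda^2-1)^{-k}$ and $b_k=(-1)^k\lambda^k(\lambda^2-1)^{-k}$, which after the swap give $\alpha_{k,k}(x)=(a_k+(-1)^k b_k)x^k=2\lambda^k(\lambda^2-1)^{-k}x^k$ as claimed; without it one would instead get $a_k=\lambda^{2k}(\lambda^2-1)^{-k}$ and $b_k=(-1)^k(\lambda^2-1)^{-k}$, and $\alpha_{k,k}$ would come out as $(\lambda^{2k}+1)(\lambda^2-1)^{-k}x^k$, which is wrong. So you must explicitly carry the $t^k$ factor through the residue computations. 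Once you do, the rest of your argument — the $\beta_{j,i}$ identities with $\beta_{j,j}=x^j$ and $\deg\beta_{j,i}\le j$, the swap $(1-\lambda^{-n})^{-1}=1-(1-\lambda^n)^{-1}$, and the degree/coefficient‑ring bookkeeping — is sound and yields a valid alternative proof.
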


\begin{proof}
For $k=1$ one easily checks that
\bea
\sum_{t^n=1} \frac{1}{\delta(t)} =
- \frac{\lambda n }{\lambda^2-1} +\frac{2 \lambda n}{(\lambda^2-1)(1-\lambda^n)} \, . 
\eea
Suppose that there are polynomials $\alpha_{k,0}(x), \ldots, \alpha_{k,k} (x)$
with $\deg \alpha_{k,i} \leq k$ satisfying the equation~\eqref{eqn.deltasum}. We
then take the derivative both sides of~\eqref{eqn.deltasum} with respect to $\lambda$.
From the left-hand side, we obtain
\begin{align*}
\frac{d}{dc} \left(\sum_{t^n=1} \frac{1}{\delta(t)^k}  \right) &
= \sum_{t^n=1} \left(\frac{k \, t^k}{(t-\lambda)^{k+1}(t-\lambda^{-1})^k}
-  \frac{k\, t^k \lambda^{-2}}{(t-\lambda)^k (t-\lambda^{-1})^{k+1}} \right) \\
&= \frac{k}{1-\lambda^{-2}}\sum_{t^n=1} \frac{1}{\delta(t)^{k+1}} \,. 
\end{align*}
From the right-hand side, we obtain
\begin{align*}
&\sum_{i=0}^k \frac{\frac{d}{d\lambda}\alpha_{k,i}(n)}{(1-\lambda^n)^i}
+ \sum_{i=1}^k \frac{i\, n \lambda^{n-1} \alpha_{k,i}(n)}{(1-\lambda^n)^{i+1}} \\
&=\sum_{i=0}^k \frac{\frac{d}{d\lambda} \alpha_{k,i}(n)}{(1-\lambda^n)^i}
+\sum_{i=1}^k \left(\frac{i\, n \lambda^{-1} \alpha_{k,i}(n)  }{(1-\lambda^n)^{i+1}}
-\frac{i\, n \lambda^{-1} \alpha_{k,i}(n)}{(1-\lambda^n)^{i}} \right)\,.
\end{align*}
Comparing the above two equations, we obtain polynomials
$\alpha_{k+1,0}(x),\ldots,\alpha_{k+1,k+1}(x)$ satisfying
\bea
\sum_{t^n=1} \frac{1}{\delta(t)^{k+1}} =
\alpha_{k+1,0}(n) + \frac{\alpha_{k+1,1}(n)}{1-\lambda^n} + 
\cdots  +  \frac{\alpha_{k+1,k+1}(n)}{(1-\lambda^n)^{k+1}} \, .
\eea
In particular, $\alpha_{k+1,k+1}(x) =x \alpha_{k,k}(x)/(\lambda-\lambda^{-1})$ and
$\deg \alpha_{k+1,i} \leq {k+1}$. This completes the proof.
\end{proof}

\begin{lemma}
\label{lem.last}
For a given $k \geq 1$ there exist polynomials
$\beta_{k,i}(x) \in \BQ[\lambda^{\pm1}, \frac{1}{\lambda^2-1}][x]$ of degree at most
$k$ for $i=0,\dots,k$ satisfying
\bea
\frac{1}{(1-\lambda^n)^k} = \sum_{t^n=1} \left( \beta_{k,0}(n^{-1}) +
\frac{\beta_{k,1}(n^{-1})}{\delta(t)} + \dots + \frac{\beta_{k,k}(n^{-1})}{\delta(t)^k}
\right)
\eea
for all $n \geq 1$.
\end{lemma}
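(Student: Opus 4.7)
The plan is to establish the lemma by induction on $k$, inverting the triangular linear system provided by Lemma~\ref{lem.delta}. For the base case $k=1$, one solves the identity $\sum_{t^n=1}\delta(t)^{-1}=\alpha_{1,0}(n)+\alpha_{1,1}(n)/(1-\lambda^n)$ for $(1-\lambda^n)^{-1}$; since both $\alpha_{1,0}(n)$ and $\alpha_{1,1}(n)$ are proportional to $n$, one obtains
\[
\frac{1}{1-\lambda^n}=\sum_{t^n=1}\Bigl(\beta_{1,0}(n^{-1})+\frac{\beta_{1,1}(n^{-1})}{\delta(t)}\Bigr)
\]
with $\beta_{1,0}$ and $\beta_{1,1}$ polynomials of degree $1$ in $x$ that vanish at $x=0$.

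For the inductive step, assume the formula holds for $1,\ldots,k-1$, with the strengthened hypothesis $\beta_{i,0}(0)=0$. Isolating the $(1-\lambda^n)^{-k}$ term in Lemma~\ref{lem.delta} at level $k$ gives
\[
\frac{1}{(1-\lambda^n)^k}=\frac{1}{\alpha_{k,k}(n)}\Bigl[\sum_{t^n=1}\frac{1}{\delta(t)^k}-\alpha_{k,0}(n)-\sum_{i=1}^{k-1}\frac{\alpha_{k,i}(n)}{(1-\lambda^n)^i}\Bigr].
\]
Substituting the inductive expressions for $(1-\lambda^n)^{-i}$ with $i<k$, pulling the $n$-dependent factors inside the sums over $t^n=1$, and collecting terms by powers of $\delta(t)^{-1}$ reads off candidate polynomials $\beta_{k,0}(x),\ldots,\beta_{k,k}(x)$ as explicit rational expressions in $n$ and $\lambda$.

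The main obstacle is verifying that these candidates are genuine polynomials in $x=n^{-1}$ of degree at most $k$, and that $\beta_{k,0}(0)=0$ so the strengthened induction hypothesis survives. This reduces to the divisibility property
\[
n\mid\alpha_{k,0}(n)\qquad\text{and}\qquad n^j\mid\alpha_{k,j}(n)\ \text{ for }\ 1\leq j\leq k,
\]
which I would establish by a separate induction on $k$ using the recursions for the $\alpha_{k+1,i}$ derived inside the proof of Lemma~\ref{lem.delta} (each such recursion introduces a factor of $n$ whenever it lowers an index). Granting this and using that $\alpha_{k,k}(n)$ is a nonzero scalar multiple of $n^k$, each $\alpha_{k,i}(n)/\alpha_{k,k}(n)$ is a polynomial in $n^{-1}$ of degree at most $k-i$ for $i\geq 1$, and at most $k-1$ for $i=0$. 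Combined with the inductive degree bounds on the $\beta_{i,j}$ and the vanishing $\beta_{i,0}(0)=0$ (which kills the would-be positive power of $n$ coming from $n\beta_{i,0}(n^{-1})$ in the expansion of $(1-\lambda^n)^{-i}$), the degree of each $\beta_{k,i}$ in $n^{-1}$ stays at most $k$ and the vanishing $\beta_{k,0}(0)=0$ is preserved, closing the induction.
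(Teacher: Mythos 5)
Your proof is correct and in substance mirrors the paper's: both invert the lower-triangular system furnished by Lemma~\ref{lem.delta}, you by forward substitution over $k$, the paper by forming the $(k+1)\times(k+1)$ matrix $\alpha(x)$ and multiplying by its inverse. If anything, your version is the more careful one, since the features of $\alpha(x)$ that the paper records (lower triangular, $i$-th row of degree at most $i$, monomial diagonal) do not by themselves imply the degree-$k$ bound on the $\beta_{k,i}$, and the divisibility $n^{\max(1,j)}\mid\alpha_{k,j}(n)$ that you isolate, and correctly trace through the recursion for the $\alpha_{k+1,i}$, is precisely the additional fact that closes that gap.
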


\begin{proof}
Writing Lemma~\ref{lem.delta} as a linear combination of
$1, (1-\lambda^n)^{-1}, \ldots, (1-\lambda^n)^{-k}$, we obtain a $(k+1) \times (k+1)$
matrix $\alpha(x)$ with entries in $\BQ[\lambda,\frac{1}{\lambda^2-1}][x]$ such that
\be
\label{eqn.B}
\alpha(n)
\begin{pmatrix}
1\\
(1-\lambda^n)^{-1} \\
\vdots \\
(1-\lambda^n)^{-k}
\end{pmatrix} = \begin{pmatrix}
1\\
\sum_{t^n=1} \delta(t)^{-1} \\
\vdots \\
\sum_{t^n=1} \delta(t)^{-k} \\
\end{pmatrix} \, .
\ee
Moreover, $\alpha(x)$ is a lower-triangular matrix whose $i$-th row consists 
of polynomials of degree at most $i$ and whose diagonal is
$(2\lambda^k(\lambda^2-1)^{-k} x^k,\ldots,2\lambda(\lambda^2-1)^{-1} x,1)$.
It follows that entries of the 
inverse $\alpha(x)^{-1}$ are in $ \BQ[\lambda^{\pm1}, \frac{1}{\lambda^2-1}][x^{-1}]$.
We then obtain the lemma by multiplying $\alpha(n)^{-1}$ on both sides of 
Equation~\eqref{eqn.B}.
\end{proof}

Applying Lemma~\ref{lem.last} to Equation~\eqref{eqn.des}, we obtain the
existence of Laurent polynomials 
$q_{\calT,\ell}(x,y) \in \calF_{2\ell-2}E[x][y^{\pm 1}]$ satisfying
for all but finitely many $n$
\be
\label{pjthm1a}
\Phi_{\calT^{(n)} \!, \ell}=
\sum_{t^n=1} q_{\calT,\ell} \left(\frac{1}{\delta_{\calT}(t)}, \frac{1}{n} \right)  \, .
\ee
From the fact that the asymptotic of $\Phi_{\calT^{(n)},\ell}$ is a linear in $n$
(see Theorem~\ref{thm.psi}), we deduce that  $q_{\calT,\ell}$ is a polynomial in $y$.
It remains to show that the coefficients of $q_{\calT,\ell}$ lie in the field
$F(\lambda+\lambda^{-1})=F$. This follows from the fact that the left-hand side
of~\eqref{pjthm1a} is invariant under $\lambda \mapsto \lambda^{-1}$, and hence
so is the right-hand side. This completes the proof of Theorem~\ref{thm.quad}.

\begin{remark}
\label{rem.quad}
Theorem~\ref{thm.quad} requires only that $\delta_\calT(t)$ has no complex roots of
unity as roots.
\end{remark}  

\subsection{Proof of Theorem~\ref{thm.psi}}
\label{sec.psi}

In this section we give a proof of our main Theorem~\ref{thm.psi} by combining
Theorem~\ref{thm.feynman} and Theorem~\ref{thm.phi}. 

Theorem~\ref{thm.feynman} expresses the $\ell$-loop invariant
$\Phi^\conn_{\calT^{(n)},\ell}$ as a Riemann sum, which is asymptotic to a Riemann
integral as $n$ tends to infinity. This makes sense since the function to be summed
or integrated is a sum of products of $1/\delta_\calT(t)$ and the latter is nonzero
on the unit circle by assumption. 
The corrections to the approximation of the Riemann sum by a Riemann integral
are given by the Euler-Maclaurin summation formula and vanish to all orders in $1/n$,
since the functions to be integrated are periodic. This proves the existence of
$\Psi_{\calT,\ell}^\conn$ given by sums of multidimensional integrals over tori
satisfying Equation~\eqref{eqn.lloop3} up to $O(1/n^\infty)$. Since
$(\calT^{(n)})^{(m)}=\calT^{(nm)}$ for all integers $m$ and $n \geq 1$, 
Equation~\eqref{eqn.lloop3} implies that $\Psi_{\calT,\ell}^\conn$ are multiplicative
under cyclic covers, i.e., satisfy Equation~\eqref{psin}.

It remains to improve the estimate $O(1/n^\infty)$ in Equation~\eqref{eqn.lloop3}
to a sharp exponential estimate. For that, we use the explicit shape of the Riemann
sums given in Theorem~\ref{thm.phi}. We can choose roots $\lambda_1,\ldots,
 \lambda_r$ of $\delta_\calT(t)$ so that they are strictly inside the unit disk
(since they come in pairs $\lambda,1/\lambda$ and by assumption, none is on the
unit circle). Then the difference between the $n$-Riemann sum and the Riemann
integral is $O(p(n)|\lambda_j|^{n})$ as $n$ tends to infinity where $\lambda_j$ is
the smallest, in absolute value, root of $\delta_\calT(t)$ and $p(n)$
is a polynomial of $n$ of bounded degree. 

We finally give a Feynman diagram definition of $\Psi_{\calT, \ell}^\conn$
using $S^1$-flows. Our definition is analogous to the $\BZ/n\BZ$ flows used in
Section~\ref{sub.flows} and in
Theorem~\ref{thm.feynman} to describe the connected $\ell$-loop invariants
of $n$-cyclic covers. An $S^1$-\emph{flow} on a Feynman diagram $G$ is a map 
$\varphi : E(G) \rightarrow S^1$ such that  for all $v \in V(G)$
\be
\prod_{\substack{e \in E(G) \\ e \textrm{ into } v }} \varphi (e)
= \prod_{\substack{e \in E(G) \\ e \textrm{ out of } v }}  \varphi(e) \, .
\ee
The set of $S^1$-flows on $G$ is isomorphic to (as a 
multiplicative set) the $d$-dimensional torus $(S^1)^d$ where $d$ is the 
first betti number of $G$. In addition, the value $T_e$ of an $S^1$-flow on 
each edge of $G$ is a  Laurent monomial in $d$ variables, say
$t_1,\ldots,t_d$.
For a vertex-labeling $\iota : V(G)\rightarrow \{1,\ldots,N\}$  we define
\be
W^{(\infty)}_{\calT}(G;\iota) := \prod_{v\in V(G)} \Gamma^{(d(v))}_{\iota(v)}
\int_{(S^1)^d}  \prod_{(v,v)' \in E(G)} \Pi(T_{(v,v')})_{\iota(v), \, \iota(v')}\, 
\frac{d t_1 \cdots d {t_d}}{t_1 \cdots t_d} 
\ee
and let 
\be
W^{(\infty)}_{\calT}(G) := \frac{1}{\sigma(G)}
\sum_{\iota} W^{(\infty)}_{\calT}(G;\iota) 
\ee
where the sum is over all vertex-labeling $\iota : V(G)\rightarrow \{1,\ldots,N\}$.
For instance, we have
\be
W^{(\infty)}_\calT(G) = \frac{1}{8}
\sum_{1 \leq i,j \leq N} \Gamma^{(3)}_i \Gamma^{(3)}_j 
\int_{(S^1)^2} \Pi(t_1)_{ij} \Pi(t_2)_{ij} \Pi(t_1^{-1} t_2^{-1})_{ij}
\frac{d t_1 d t_2}{t_1 t_2}
\ee
for a Feynman diagram given as in Figure~\ref{fig.diagram}
(cf. Equation~\eqref{eqn.example}). The leading term $\Psi_{\calT,\ell}^\conn$ is
given by
\be
\label{eqn.psi}
\Psi_{\calT\!,\ell}^\conn = 
\mathrm{coeff}
\left[ \sum_{G \in \mathcal{G}_\ell} W^{(\infty)}_{\calT}(G), \,\hbar^{\ell-1}
\right]+ \Gamma^{(0)} \, .
\ee
Note that contrast to the $\ell$-loop invariant $\Phi_{\calT\!,\ell}^\conn$,
a choice of a peripheral curve is not used in the above formula for
$\Psi_{\calT\!,\ell}^\conn$. This completes the proof of Theorem~\ref{thm.psi}. 
\qed

\begin{remark}
\label{rem.psi}
Theorem~\ref{thm.psi} holds under the assumption that $\delta_{\calT}(t)$ has no
roots on the unit circle,
otherwise the limit does not exist. Also, the exponentially small bound is optimal.
Compare with the following toy example when $\lambda$ is not a complex root of unity
\begin{equation*}
\sum_{t^n=1} \frac{t}{(t-\lambda)(t-\lambda^{-1})} =
\frac{n}{\lambda-\lambda^{-1}} \Big(\frac{1}{1-\lambda^n}-\frac{1}{1-\lambda^{-n}}\Big)
= \begin{cases}
  n \frac{\lambda}{\lambda^2-1} + O(n \lambda^{n}) & \text{if }  |\lambda|<1 \\[3pt]
  n \frac{\lambda}{1-\lambda^2}  + O(n \lambda^{-n}) & \text{if } |\lambda|>1
\end{cases}
\end{equation*}
as $n$ tends to infinity, whereas the limit does not exist if $|\lambda|=1$. 
\end{remark}

\subsection{Generalized power sums}
\label{sub.psums}

In this section we review briefly the \emph{generalized power sums} and their
properties, following~\cite{Poorten, Everest}. The latter are sequences of the form
\be
\label{expsums}
a_n = \sum_{j=1}^m A_j(n) \lambda_j^n
\ee
with roots $\lambda_j$ for $1 \leq j \leq m$ distinct complex numbers and coefficients
$A_j(n)$ polynomials of degree $d_j-1$ for positive integers $d_j$ for $1 \leq j
\leq m$. The order of the generalized power sum is $d=\sum_{j=1}^m d_j$. Generalized
power sums are solutions to linear recursions with constant coefficients,
explicitly,
\be
\label{anrec}
a_{n+d}= s_1 a_{n+d-1} + \dots + s_d a_n, \qquad n=0,1,2,\dots
\ee
where $s(t)=\prod_{j=1}^m (1-\lambda_j t)^{d_j}=1-s_1 t - \dots - s_d t^d$,
and their generating series
\bea
\sum_{n=0}^\infty a_n t^n = \frac{r(t)}{s(t)}
\eea
is a rational function of negative $t$-degree. Note that if the roots of a generalized
power sum $a(n)$ are known, then $(a_n)$ is determined by its first $d$ values, as
follows from recursion~\eqref{anrec}. 

A special but important example of
generalized power sums are the quasipolynomials, whose roots are complex roots
of unity. Quasipolynomials play a key role in the lattice point counting in
rational convex polyhedra~\cite{Ehrhart,BV1,Barvinok-Pommersheim,Beck-Robins}. 

Note that the vector space of generalized power sums is a ring with respect
to pointwise multiplication of sequences.

We next recall the Lech--Mahler--Skolem theorem, whose statement is elementary
and whose proof requires $p$-adic analysis.

\begin{theorem}
\label{thm.SML}\cite{Sk,Ma,Le}
The zero set $\{n \in \BN \, \, | a_n=0\}$ of a generalized power sum $(a_n)$
is the union of a finite set and a finite set of arithmetic progressions.
\end{theorem}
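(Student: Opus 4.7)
The plan is to prove this by $p$-adic analysis, following the classical approach of Skolem, Mahler, and Lech. The key idea is that after passing to a suitable arithmetic progression, the sequence $a_n$ becomes the restriction to $\BN$ of a $p$-adic analytic function of a single variable, and then Strassman's theorem bounds the zeros.

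First I would reduce to the case where all the roots $\lambda_1,\dots,\lambda_m$ and the coefficients of the polynomials $A_j(n)$ lie in a number field $K$. Since the $\lambda_j$ are finite in number and nonzero, one can choose a prime ideal $\mathfrak{p}$ of $\mathcal{O}_K$ lying over a rational prime $p$ such that each $\lambda_j$ is a $\mathfrak{p}$-adic unit (all but finitely many primes work). Let $K_\mathfrak{p}$ denote the completion and $q$ the size of the residue field $\mathcal{O}_K/\mathfrak{p}$. Then $\lambda_j^{q-1}\equiv 1 \pmod{\mathfrak{p}}$ for every $j$.

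Next, I would choose $M$ to be a common multiple of $q-1$ and $p$, chosen large enough so that $\lambda_j^M = 1+p\mu_j$ with $\mu_j \in \mathcal{O}_{K_\mathfrak{p}}$, ensuring that $n \mapsto (\lambda_j^M)^n = \exp_p(n \log_p(\lambda_j^M))$ extends to a $p$-adic analytic function of $n \in \mathbb{Z}_p$ via the convergent $p$-adic exponential/logarithm series. Partitioning $\BN$ into residue classes modulo $M$, for each fixed $r \in \{0,1,\dots,M-1\}$ define
\[
b_n^{(r)} \;:=\; a_{Mn+r} \;=\; \sum_{j=1}^m A_j(Mn+r)\,\lambda_j^r\,(\lambda_j^M)^n.
\]
Since $n \mapsto A_j(Mn+r)$ is polynomial and $n \mapsto (\lambda_j^M)^n$ is $p$-adic analytic on $\mathbb{Z}_p$ by construction, $b_n^{(r)}$ is the restriction to $\BN \subset \mathbb{Z}_p$ of a $p$-adic analytic function $F_r : \mathbb{Z}_p \to K_\mathfrak{p}$.

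Finally, I would invoke Strassman's theorem: a nonzero $p$-adic analytic function on $\mathbb{Z}_p$ has only finitely many zeros (the count being bounded by the index of the largest coefficient in its power-series expansion). Applied to each $F_r$, this yields the dichotomy that for every $r$, either $F_r \equiv 0$ (so $\{Mn+r : n \in \BN\}$ is entirely contained in the zero set, contributing an arithmetic progression) or $F_r$ vanishes only finitely often (contributing finitely many isolated zeros). Taking the union over the $M$ residue classes produces the desired description of the zero set as a finite union of a finite set and finitely many arithmetic progressions. The main obstacle is the $p$-adic step: one must be careful to choose $M$ so that $\log_p(\lambda_j^M)$ lies in the disc of convergence of $\exp_p$, and to verify that Strassman's bound applies uniformly---this is precisely where passing from $\BZ$ to $\BZ_p$ and using the rigidity of analytic functions replaces the lack of any direct combinatorial argument.
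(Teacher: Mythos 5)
Your sketch is a correct reconstruction of the classical Skolem--Mahler--Lech argument: pass to a number field, choose a prime $\mathfrak{p}$ at which all roots are units, replace $n$ by $Mn+r$ so that $n\mapsto\lambda_j^{Mn+r}$ interpolates to a $p$-adic analytic function on $\BZ_p$, and apply Strassman's theorem to each residue class. The paper does not reprove this result but cites Skolem, Mahler, and Lech and explicitly notes that the proof requires $p$-adic analysis, which is exactly the route you take.
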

Moreover, if the roots of a generalized power sum are multiplicatively independent,
then its zero set is either finite or all the natural numbers. 

We now come to the proof of Proposition~\ref{prop.rational}.
By putting Equation~\eqref{px} of Theorem~\ref{thm.phi} in a common denominator
and abbreviating $p_{\calT,\ell}$ by $p$, it follows that
\be
\label{tip}
((1-\lambda_1^n)\dots (1-\lambda_r^n))^{2\ell-2}
\Phi_{ \mathcal{T}^{(n)},\ell} = \ti p(\lambda_1^n,\dots,\lambda_r^n,n)
\ee
where $\ti p(x_1,\dots,x_r,y) =
(1-x_1)^{2\ell-2} \dots (1-x_r)^{2\ell-2}
p\big(\tfrac{1}{1-x_1},\dots,\tfrac{1}{1-x_r},y) \in \calF_{r(2\ell-2)}E[x][y]$.
It is easy to see that the linear map 
$\calF_{2\ell-2}E[x][y] \to \calF_{r(2\ell-2)}E[x][y]$ that sends $p$ to $\ti p$
is injective and that the sequence
$(\ti p(\lambda_1^n,\dots,\lambda_r^n,n))$ is a generalized power sum. Since
the dimension of $\calF_sE[x]$ is $\binom{r+s}{s}$ and the $y$-degree of $\ti p(x,y)$
is at least 1 and at most $\ell-1$, the non-resonance assumptions of $\Lambda$
imply that the set of
roots of this generalized power sum are monomials in $\lambda_1,\dots,\lambda_r$
(in total at most $\binom{r+r(2\ell-2)}{r}$) of $y$-degree at least 1 and at
most $\ell-1$. It follows 
that the degree of the generalized power sum~\eqref{tip} is at most
$(\ell-1)\binom{r+r(2\ell-2)}{r}$. Thus, if $\Lambda$ is known, then the first
$(\ell-1)\binom{r+r(2\ell-2)}{r}$ many initial values of it determine it completely,
and moreover determine $\ti p$. Since the map $p \mapsto \ti p$ is injective,
the above discussion together with~\eqref{tip} imply that 
the polynomial $p_{\calT,\ell}$ is determined by $\Lambda$ and by 
$(\ell-1)\binom{r+r(2\ell-2)}{r}$ many initial values of $\Phi_{\calT^{(n)},\ell}$.

It remains to reduce the number of initial values of $\Phi_{\calT^{(n)},\ell}$
from $(\ell-1)\binom{r+r(2\ell-2)}{r}$ to $d_{\ell,r}:=(\ell-1)\binom{r+2\ell-2}{r}$.
This is possible, because $\ti p$ lies in a $d_{\ell,r}$-dimensional
subspace of $\calF_{r(2\ell-2)}E[x][y]$, but the details are more delicate. 

To prove this, we write $p(x,y)$ in terms of its monomials as
$p(x,y)=\sum_{(\alpha,\beta) \in \calC} c_{\alpha,\beta} x^\alpha y^\beta$
where $\alpha=(\alpha_1,\dots,\alpha_r)$, $\beta \in \BN$, $x=(x_1,\dots,x_r)$
and $x^\alpha=x_1^{\alpha_1} \dots x_r^{\alpha_r}$. Then,
$|\alpha|=\alpha_1+\dots+\alpha_r \leq 2\ell-2$, $\beta \leq \ell-1$
and $|\calC| = d_{\ell,r}$. 

Consider Equation~\eqref{philn} for $n=n,n+1,\dots,n+d_{\ell,r}-1$ as a system of
linear equations with unknowns $c_{\alpha,\beta}$. The corresponding square matrix
$\big(\tfrac{1}{(1-\lambda^{n+j})^\alpha} (n+j)^\beta\big)$ has rows indexed by
$j=0,\dots,d_{\ell,r}-1$ and columns indexed by $(\alpha,\beta) \in \calC$. 
After putting the matrix into a common denominator, its numerator is a generalized
power sum, whose roots are monomials in $\lambda_j$. If this generalized power sum
is not identically zero, the Lech-Mahler-Skolem theorem and the non-resonance
assumption on $\lambda_j$ implies that the zero set of the generalized
power sum is finite, hence we can solve the system of linear equations using
$d_{\ell,r}$ consecutive values of $\Phi_{\calT^{(n)},\ell}$ to recover the
coefficients of $p(x,y)$.

Thus, we need to prove that the generalized power sum is not identically zero.
Assume otherwise. Using the multiplicative independence of $\Lambda$, it follows that
that each polynomial $A_j(n)$ (in the notation of~\eqref{expsums}) is identically
zero and this implies that the determinant
$\Delta_\calC(y,\lambda,n):=\det(M_\calC(y,\lambda,n))$ of the matrix
$M_\calC(y,\lambda,n):=(\big(\tfrac{1}{(1-y \lambda^{j})^\alpha}
(n+j)^\beta\big))$ vanishes identically for all $y=(y_1,\dots,y_r)$ and all $n$. Let us 
totally order $\calC$ by $(\alpha,\beta)$ by $(\alpha,\beta) \geq (\alpha',\beta')$
if and only if $|\alpha| > |\alpha'|$ or $|\alpha|=|\alpha'|$ and $\beta \geq \beta'$.
We extend this to a partial order for subsets $\calC'$ of $\calC$ by
$\calC' \geq \calC''$ if the maximum element of $\calC'$ is greater than or equal
to the maximum element of $\calC''$. 

Let $\calS
=\{ \calC' \,\, | \Delta_{\calC'}(y,\lambda,n)=0 \,\, \text{for all} \,\,y,n \}$.
Note that $\calS$ is nonempty since it contains $\calC$. Let $\calC' \in \calS$
denote an element of $\calS$ with $|\calC'|$ minimum. Let $(\alpha',\beta')$ denote
the maximum element of $\calC'$. We distinguish two cases. 

\noindent
Case 1. If $\beta'>0$, then after doing column
operations on the matrix $M_{\calC'}(y,\lambda,n)$ we can assume that the $j=0$
row has vanishing entries for $(\alpha',\beta)$ except at $\beta=0$. Then, the
matrix obtained from $M_{\calC'}(y,\lambda,n)$ by removing the $(\alpha',0)$ row and
column is $M_{\calC'-\{\alpha',\beta'\}}(\lambda y,\lambda,n+1)$. 

Expanding the determinant $\Delta_{\calC'}(y,\lambda,n)$ with respect to the $0$-th row,
it follows that
\be
\label{deltac}
0=\Delta_{\calC'}(y,\lambda,n)=\tfrac{1}{(1-y)^{\alpha'}} 
\Delta_{\calC'-\{\alpha',\beta'\}}(\lambda y,\lambda,n+1) + (\text{other terms}) \,.
\ee
Both sides of the above equation are rational functions of $y$, and each term
of the determinants and of the other terms are products of
$\tfrac{1}{(1-y \lambda^j)^{\alpha''}}
=\prod_{i=1}^r \tfrac{1}{(1-y_i\lambda_i)^{\alpha''_i}}$. 
On the other hand, the other terms do not have a singularity $(1-y)^{\alpha'}$.
It follows that $\Delta_{\calC'-\{\alpha',\beta'\}}(\lambda y,\lambda,n+1)=0$. Thus,
$\calC'-\{\alpha',\beta'\} \in \calS$ but $|\calC'-\{\alpha',\beta'\}| < |\calC'|$
a contradiction. 

\noindent
Case 2. If $\beta'=0$, then expanding the determinant
$\Delta_{\calC'}(y,\lambda,n)$ with respect to the $0$-th row. Equation~\eqref{deltac}
still holds and the same reasoning as in the first case implies that
$\Delta_{\calC'-\{\alpha',\beta'\}}(\lambda y,\lambda,n+1)=0$ giving a contradiction
once again. 
 
This concludes the proof of the second part of the proposition.

For the first part, observe that the $n$-th cyclic resultant $N_n(\delta_\calT)$ of the
twisted 1-loop invariant  $\delta_\calT(t)$ is given by
\bea
N_n(\delta_\calT)=\prod_{\omega^n=1}\delta_\calT(\omega)
=\prod_{j=1}^r (1-\lambda_j^n)(1-\lambda_j^{-n}),
\eea
which equals to $(1-\lambda_1^n)^2 \dots (1-\lambda_r^n)^2$ times the $n$-th power
of a signed monomial. This and Equation~\eqref{tip} imply that the sequence
$(N_n(\delta_\calT)^{\ell-1}\Phi_{ \mathcal{T}^{(n)},\ell})$ is a generalized power
sum, hence its generating series~\eqref{phirat} is rational. This concludes the
proof of Proposition~\ref{prop.rational}.
\qed

\subsection{Asymptotics}
\label{sub.determines}

The rest of the section is devoted to the proof of Proposition~\ref{prop.determines}.
Fix a rational function $R(x,y) \in \BC(x)[y]$ regular at $x=0$, where
$x=(x_1,\dots,x_r)$. Then, we can consider the image of $R(x,y)$ in the completed
power series ring $\BC[[x]][y]$
\be
\label{Rxy}
R(x,y)=\sum_{k \in \BN^r} a_k(y) x^k
\ee
where $k=(k_1,\dots,k_r) \in \BN^r$ and $x^k=x_1^{k_1} \dots x_r^{k_r}$, and
where $a_k(y)$ are polynomials in $y$ of degree at most $d$. Fix a set
$\Lambda_+=\{\lambda_j \,\, | \,\, j=1,\dots r\}$ of multiplicatively independent
nonzero complex numbers with $|\lambda_j|<1$ for all $j=1,\dots,r$.
Consider the set $\calL=\{\lambda^k \, \, | \, \, k \in \BN^r\}$ 
where $\lambda^k=\lambda_1^{k_1} \dots \lambda_r^{k_r}$, and let $\calE$ denote
the set of the absolute values of the elements of $\calL$. Since $0<|\lambda_j| <1$
for all $j$, it follows that $\calE$ is a discrete subset of $(0,1)$ with $0$
its only limit point. Hence, we can write $\calE=\{L_m \,\, | \,\, m \in \BN\}$
where $1 > L_0 > L_1 > L_2 > \dots$. Using this, we can partition
$\calE = \sqcup_{m \in \BN} \calE_m$, where $\calE_m$ is the set of $\lambda^k$
with $|\lambda^k|=L_m$. The assumptions on $\{\lambda_j\}$ imply that each $\calE_m$
is a finite set and that the map
$\{x^k \,\, | \,\, k \in \BN\} \to \calL$ that sends $x^k$ to $\lambda^k$ is 1-1.
Thus, Equation~\eqref{Rxy} can be written in the form
\be
\label{Rxy2}
R(x,y)=\sum_{m=0}^\infty R_m(x,y), \qquad
R_m(x,y) = \sum_{k \in \calE_m} a_k(y) x^k 
\ee
where $R_m(x,y)$ is a finite sum. Letting $a_{m,n}=R_m(\lambda^n,n)$,
it follows that for each fixed $m$, the sequence $n \mapsto a_{m,n}$ is a
generalized power sum that satisfies $a_{m,n}=O(n^d L_m^n)$, that the series
\be
a_n = \sum_{m=0}^\infty a_{m,n}
\ee
is absolutely convergent and that the partial sums satisfy
\be
\label{amn}
a_n - \sum_{m=0}^{M-1} a_{m,n} = a_{M,n} + O(n^d L_{M+1}^n) \,.
\ee
It follows by induction on the natural number $M$ that the left hand side
of~\eqref{amn} is a generalized power sum which is a sequence of
Nilsson type~\cite{Ga:Nilsson}, whose asymptotic expansion to all orders in $1/n$
is terminating and given exactly by $a_{M,n}$. This implies by induction that
$a_n$ (or infinitely many evaluations of it), determine $a_{m,n}=R_m(\lambda^n,n)$
for all $m$. This implies in turn that $a_n$ determines the the polynomial $R_m(x,y)$
and the set $\calE_m$ for all $m$, and hence determines the image of $R(x,y)$
in $\BC[[x]][y]$ by~\eqref{Rxy} as well as the set $\calE$ (and hence $\Lambda_+$).
Since the map $\BC(x)[y] \to \BC[[x]][y]$ (partially defined on rational functions
which are regular at $x=0$) is injective, this completes the first part of
Proposition~\ref{prop.determines}.

For the second part, assume that $\delta_\calT(t)$ is non-resonant with no roots on the
unit circle. Without loss of generality, we can choose $\lambda_j$ for $j=1,\dots,r$
with $|\lambda_j|<1$ for all $j$ such that the roots of $\delta_\calT(t)$ are
$\{\lambda_1^{\pm 1},\dots,\lambda_r^{\pm 1}\}$. 
Note that $p_{\calT,\ell}$ lies in $E[x][y]$ which is a subspace of
$E(x)_\loc[y]$ (where $E(x)_\loc$ denotes the ring of rational functions on $x$
which are regular at $x=0$). This fact, together with part (a) of the lemma, 
concludes the proof of the proposition.
\qed


\section{Examples}
\label{sec.examples}

The twisted loop invariants, defined as formal Gaussian integrals, are explicitly
computed algebraically in terms of the NZ-datum of an ideal triangulation. Likewise,
the twisted loop invariants can be computed algebraically using the twisted
version of the NZ-datum of~\cite{GY21}. In other words, Theorem~\ref{thm.feynman}
leads to an effective computation of the twisted loop invariants. 

In this section we illustrate our theorems by using an exact computation of the
$\ell$-loop invariants of $n$-cyclic covers for $\ell=2,3$ and various values
of $n$ depending on the complexity of the knot, i.e., on the number of tetrahedra,
the degree of its invariant trace field and the degree of its twisted $1$-loop
invariant $\delta(t)$. 

Recall that we can reconstruct the polynomial $p_{\calT,\ell}(x,y)$ from
$\calL$ and $(\ell-1)\binom{r+2\ell-2}{r}$ values of the $\ell$-loop invariants of
the $n$-cyclic covers. Note the relation $r \leq 3g-2$
(see~\cite[Thm.1.5]{Dunfield:twisted}) between the degree of the adjoint torsion
polynomial and the genus $g$ of the knot, namely the minimum genus of all Seifert
surfaces of the knot. In all of our examples, equality $r=3g-2$ is attained. 

The next table summarizes the number of values of $n$-cyclic covers needed to
determine the loop invariants for all $n$, when $\Lambda$ is known. 

\begin{table}[htpb!]
\label{table.gr}
\centering
  \begin{tabular}{c|c|c|c}
$g$ & $r$ & $\#2$-loop values & $\#3$-loop values \\ \hline
1 & 1 & 3 & 10 \\ \hline
2 & 4 & 15 & 140 \\ \hline 
$g$ & $3g-2$ & $\tfrac{3}{2}g(3g-1)$ & $\tfrac{1}{4}(3g+2)(3g+1)g(3g-1)$   
  \end{tabular}
\vspace{0.5cm}
\caption{The number of values needed to determine the 2 and 3-loop invariants
of cyclic covers when $\Lambda$ is known.}
\end{table}

\subsection{Genus 1 examples: $4_1$ and $5_2$ knots}
\label{sub.genus1}

In this section we give two examples of genus 1 knots, namely the two simplest
hyperbolic knots, the $4_1$ and the $5_2$ knot. For the knot $4_1$, 
we computed the 2 and 3-loop invariants of $n$-cyclic covers for $n=1,\ldots,100$.
Note that
these invariants are elements of the invariant trace field $\BQ(\sqrt{-3})$, but due to
the chirality of the knot, they are essentially elements of $\BQ$. We computed
those rational numbers numerically, and after multiplying them by the expected
denominators (coming from the $n$-cyclic resultant of a small power of
the twisted 1-loop invariant), we lifted the nearly rational numbers to exact rational
numbers, and checked that they agree within the precision of the computation (about
1000 digits). For the knot $5_2$, the 2 and 3-loop invariants of the $n$-cyclic
cover is an element of the cubic invariant trace field of discriminant $-23$.
In this case,
we numerically computed the invariants using Theorem~\ref{thm.feynman} for each
embedding of the shapes in the complex numbers, and then took the product thus
numerically computing numerically the coefficients of the miminal polynomial of
the invariants. The latter has rational coefficients, which as before can be
numerically computed and then lifted to exact rational numbers. Having done do,
we converted the geometric root of the minimal polynomial back to the invariant
trace field,
thus getting exact value of the 2 and 3-loop invariants for $n$-cyclic covers
of $5_2$ for $n=1,\dots,60$. Using this data, we then interpolated numerically
to find the formulas presented below. Once the formulas were found, an exact
computation can verify them for the computed values of $n$. 

The answer found agrees with the algebraic computation of Theorem~\ref{thm.quad}
and gives evidence to the conjecture that Theorem~\ref{thm.phi} works for all
natural numbers $n$, as opposed to all but finitely many $n$.

Although we computed the 2 and 3-loop invariants for cyclic covers for NZ data
that uses both the meridian and the longitude, we will present our results
using the longitude only. 



We now present our computations for the $4_1$ knot, whose invariant trace field is
$F=\BQ(\sqrt{-3})$ is a subfield of $\BC$. The twisted 1-loop invariant is
$\delta(t)=t-5+t^{-1}$, and has coefficients in the real part
of $F$, namely $\BQ$ (this is an accident because $4_1$ is amphichiral).
Let us denote the sum of the evaluation of a function at complex roots of unity
by $\Av_n(f(t))=\sum_{\omega^n=1} f(\omega)$. 
We have $\Phi_{\calT^{(n)},\ell}^\conn=\Av_n(\vphi_{\ell}^\conn(t,n))$ where
$\delta=\delta(t)$ and
\be
\label{41average}
\begin{aligned}
\vphi_{2}^\conn(t,n) &=
\Big(\frac{4}{3 n \delta^2} + \frac{20}{63 n \delta} + \frac{55}{1512} \Big)
\sqrt{-3} \\
\vphi_{3}^\conn(t,n) &=
-\frac{80}{3 n^2 \delta^4} - \frac{1976}{315 n^2 \delta^3}
+ \Big(-\frac{8}{189} + \frac{916}{1323 n^2}\Big)\frac{1}{\delta^2}
+ \Big(\frac{473}{26460} + \frac{2036}{19845 n^2}\Big)\frac{1}{\delta} \, .
\end{aligned}
\ee
This illustrates Theorem~\ref{thm.quad}.
The primes $67$ and $103$ that appear in the denominators of the
above expressions come from the $n$-cyclic resultant of $\delta(t)$ when one
determines the coefficients of $n^j \delta^j$ from few initial values of $n$. 


After doing a partial fraction decomposition of the rational functions that
appear in~\eqref{41average} and using Equations~\eqref{av1}-\eqref{av3}
of Example~\ref{ex.lemma}, we obtain explicit formulas for the invariants in terms
of generalized power sums illustrating Theorem~\ref{thm.phi}
\begin{small}
\be
\label{41short}
\begin{aligned}
\Phi_{\calT^{(n)},2}^\conn & =
-\frac{n (55 \lambda^n + 82 + 55 \lambda^{-n}) \, \sqrt{-3}}{
1512(1-\lambda^n)(1-\lambda^{-n})}
\\
\Phi_{\calT^{(n)},3}^\conn & =
\frac{-n^2 (\tfrac{32}{1323} \lambda^n + \tfrac{32}{441} + \tfrac{32}{1323}
  \lambda^{-n}) + n \sqrt{21} \big(-\tfrac{317}{238140}\lambda^{2n}
  -\tfrac{1985}{166698} \lambda^n +\tfrac{1985}{166698} \lambda^{-n}
+\tfrac{317}{238140}\lambda^{-2n}\big)}{
(1-\lambda^n)^2(1-\lambda^{-n})^2}
\end{aligned}
\ee
\end{small}
where $\lambda=\tfrac{1}{2}(5+\sqrt{21}) \approx 4.7912$ is one of the two roots
of $\delta(t)$, the other one being
$\lambda^{-1}=\tfrac{1}{2}(5-\sqrt{21}) \approx 0.2087$. 

It follows from either~\eqref{41average} or~\eqref{41short} that the leading
asymptotics of $\Phi_{\calT^{(n)},\ell}^\conn$ for $\ell=2,3$ are given by
\be
\label{41psi}
\begin{aligned}
\Phi_{\calT^{(n)},2}^\conn & = \frac{55 \sqrt{-3}}{1512} n + O(n |\lambda|^{-n}) \\
\Phi_{\calT^{(n)},3}^\conn & = -\frac{317 \sqrt{21}}{238140} n + O(n^2 |\lambda|^{-n}) 
\end{aligned}
\ee
illustrating Theorem~\ref{thm.psi}. Equations~\eqref{41short} imply that
the generating series of Proposition~\ref{prop.rational}  are given by
\be
\sum_{n=0}^\infty (1-\lambda^n)(1-\lambda^{-n}) \Phi^\conn_{\calT^{(n)},2} t^n =
-\sqrt{-3} \,
\frac{t(119 - 530 t + 1068 t^2 - 530 t^3 + 119 t^4)}{504 (-1 + t)^2 (1 - 5 t + t^2)^2}
\ee
and
\begin{tiny}
\begin{align*}
\sum_{n=0}^\infty ((1-\lambda^n)(1-\lambda^{-n}))^2  \Phi^\conn_{\calT^{(n)},3} t^n &=
\frac{t (1 + t)}{588 (-1 + t)^3 (1 - 23 t + t^2)^2 (1 - 5 t + t^2)^3} 
(343 - 15565 t + 249432 t^2 - 1448727 t^3
\\ & \hspace{-1cm}
+ 4346901 t^4 - 6772800 t^5 + 4346901 t^6 - 1448727 t^7 + 249432 t^8
- 15565 t^9 + 343 t^{10}) \,.
\end{align*}
\end{tiny}
The factor $1-23 t + t^2$ which appears in the above denominator
equals to $(1-\lambda^2t)(1-\lambda^{-2}t)$. This concludes the discussion
of the $4_1$ knot. 



The next hyperbolic knot is the $5_2$ knot, whose invariant trace field is the cubic
field of discriminant $-23$ generated by the root $\xi \approx -0.662 - 0.562\, i $
of the equation $\xi^3-\xi-1=0$. The twisted 1-loop invariant is
\be
\delta(t) = (2 + 4 \xi + 2 \xi^2) t -5 - 2 \xi + 3 \xi^2 + (2 + 4 \xi + 2 \xi^2) t^{-1}
\,.
\ee
We have $\Phi_{\calT^{(n)},\ell}^\conn =\Av_n(\vphi_{\ell}^\conn(t,n))$
where $\delta=\delta(t)$ and
\begin{tiny}
\begin{align*}
  \vphi_{2}^\conn(t,n) &=
  -\frac{4 (-16228 - 3232 \xi + 8679 \xi^2)}{7705 n} \frac{1}{\delta^2} \\
  & \quad
  + \Big( \frac{39 - 56 \xi - 24 \xi^2}{46}
  -\frac{4(-26127539 + 15044839 \xi + 3721992 \xi^2)}{94440185 n} \Big) \frac{1}{\delta}
  \\ & \quad
  +\frac{123094133 - 446744448 \xi + 259344006 \xi^2}{2266564440}
  \\
  \vphi_{3}^\conn(t,n) &=
  \Big(\frac{144171776}{516235} + \frac{86345584 \xi}{516235}
  - \frac{136288528 \xi^2}{516235} \Big) \frac{1}{n^2\delta^4}
  \\ & \quad
  \Big( \big(
  \frac{2021650619247678416}{12919632159420875}
  - \frac{194429261261137656 \xi}{12919632159420875}
  - \frac{1412467704798780848 \xi^2}{12919632159420875} \big) \frac{1}{n}
  \\ & \quad +
 \frac{362208}{7705} - \frac{57728 \xi}{7705} - \frac{243704 \xi^2}{7705}
 \Big)\frac{1}{n \delta^3}
 \\ & \quad
 \Big(
 \frac{19464170555699}{8731939505100} - \frac{1654507907596 \xi}{2182984876275}
 + \frac{661858625444 \xi^2}{727661625425}
 \\ & \quad + 
 \big(\frac{43685924340213}{2910646501700} - \frac{13472722690929 \xi}{1455323250850}
     + \frac{2008555368111 \xi^2}{2910646501700} \big)\frac{1}{n}
 \\ & \quad +
 \big(
 \frac{161627626755245606632}{8963543285548396125} -
 \frac{119998551075098128112 \xi}{8963543285548396125} +
 \frac{127286031414479468 \xi^2}{2987847761849465375}
 \big) \frac{1}{n^2}
 \Big) \frac{1}{\delta^2}
 \\ & \quad
 \Big(
 \big(
 \frac{-19378724062777204444}{475067794134064994625}
 - \frac{474092286600322084396 \xi}{475067794134064994625}
 + \frac{128349517059147927744 \xi^2}{158355931378021664875}
 \big) \frac{1}{n^2}
  \\ & \quad
 +
 \big(
 \frac{1746056639554239}{35675794171336900}
 - \frac{31637787802490587 \xi}{17837897085668450}
 + \frac{11693862723463677 \xi^2}{8918948542834225}
 \big) \frac{1}{n}
  \\ & \quad
 +
 \big(
 \frac{59134987864619182444}{475067794134064994625}
 - \frac{1527155471544628788041 \xi}{1900271176536259978500}
 + \frac{3826051934183205772 \xi^2}{6885040494696594125}
 \big) 
 \Big) \frac{1}{\delta}
 \\ & \quad
 \Big(
 -\frac{428855832942393}{8918948542834225}
 -\frac{2998162280908073 \xi}{35675794171336900}
 +\frac{1615737458359533 \xi^2}{17837897085668450}
 \Big) \,.
\end{align*}
\end{tiny}

The leading asymptotic values are given by $\Phi_{\calT^{(n)},\ell}^\conn
= n \Psi_{\calT,\ell}^\conn + O(n^{\ell-1} |\lambda|^{n})$ for $\ell=2,3$ where
$\lambda \approx 0.0502 -0.1704 \,i$ is a root of $\delta(t)$ and satisfies the
equation $8 \lambda^6 -28 \lambda^5 + 270 \lambda^4 -109 \lambda^3
+ 270 \lambda^2 -28 \lambda +8=0$ and 


\begin{tiny}
\be
\label{52psi}
\begin{aligned}
\Psi_{\calT^{(n)},2}^\conn & = \frac{1}{1160480993280}(-16601383280
+ 239466164328 \lambda - 30998500743 \lambda^2 + 51073175277 \lambda^3
\\ & \qquad
- 2600093877 \lambda^4 + 384393303 \lambda^5)
\\  
\Psi_{\calT^{(n)},3}^\conn & = \frac{1}{26122110666289424422956544000}(
3763333983996990578027312 - 27832672813601695938777064 \lambda 
\\ & \qquad
+ 98732772027957178344155 \lambda^2 + 1194221340324541487037559 \lambda^3 - 
453984084634619809746255 \lambda^4
\\ & \qquad
+ 29998843726647510986933 \lambda^5) \,.
\end{aligned}
\ee
\end{tiny}


\subsection{Higher genus examples: the $6_2$ and the $(-2,3,7)$ pretzel
  knots}
\label{sub.62.237}


The above two examples illustrate our main theorems when $\delta(t)$ is quadratic
and the Seifert genus $g$ is $1$. We now present two further examples of higher
genus illustrating Theorem~\ref{thm.phi} and Equation~\eqref{2loopn}.
The coefficients $c_{ij}$, $c_i$ and $c_0$ in Equation~\eqref{2loopn} 
can be determined from $(r+1)(r+2)/2$
values of $\Phi_{\mathcal{T}^{(n)},2}$ and lie in the splitting field $E$ of $\delta(t)$. 

The first example is the $6_2$ knot where $g=2$ and $r=4$. Its default \texttt{SnapPy}
triangulation has $5$ tetrahedra and its invariant trace field has degree $5$.
We computed $120$ exact values of the 2-loop invariant of the cyclic covers.
To do so, we numerically computed these values for all $5$ embeddings of the shapes to
the complex numbers, and from that we computed the minimal polynomial
(whose coefficients are integers computed approximately to high precision and then
recognized). Once we knew the minimal polynomial, we converted its chosen root to
the fixed embedding of the invariant trace field to the complex numbers. Having done
so, we used $15$ values of $n=1,\dots,15$ to numerically compute the above coefficients
to $1000$ digits, and then used the remaining $105$ values of $n$ to check our
numerical answer, which agreeded to all $1000$ digits of precision.
Note that the coefficients
$c_{ij}$, $c_i$ and $c_0$ that appear in~\eqref{2loopn} are elements of the splitting
field of $\delta(t)$, an explicit number field of degree $120$. Although we can
numerically compute the coefficients to arbitrary high precision, eg. $10000$ digits,
it is not likely that we can express them explicitly by elements of the splitting
field. 

Our second example is the $(-2,3,7)$ pretzel knot where $g=5$ and $r=13$. 
Its default \texttt{SnapPy} triangulation has $3$ ideal tetrahedra and
its invariant trace field is cubic (and equal to that of the $5_2$ knot).
Working as above, we were able to compute the first $140$ values of the $2$-loop
invariant of its $n$-cyclic cover. We used $105$ values to determine the coefficients
of~\eqref{2loopn} to the precision of $1000$ digits, and then $35$ further values
to check our prediction. Once again, Equation~\eqref{2loopn} worked to all
the accuracy of $1000$ digits. In this case, the complexity of the splitting field
is prohibitive, and it is unlikely that one will be able to compute the exact
values of the $105$ constants in Equation~\eqref{2loopn}.

\subsection*{Acknowledgments}
S.G. wishes to thank Efim Zelmanov for enlightening conversations.
S.Y. wishes to thank Jaeseong Oh for helpful comments on Section~\ref{sub.flows}.


\bibliographystyle{hamsalpha}
\bibliography{biblio}
\end{document}